\newtheorem{theorem}{Theorem}[section]
\newtheorem{proposition}[theorem]{Proposition}
\newtheorem{lemma}[theorem]{Lemma}
\newtheorem{corollary}[theorem]{Corollary}
\newtheorem{example}[theorem]{Example}
\newtheorem{definition}[theorem]{Definition}
\newtheorem{remark}[theorem]{Remark}
\newcommand{\NN}{\ensuremath{\mathbb{N}}} 
\newcommand{\ZZ}{\ensuremath{\mathbb{Z}}} 
\newcommand{\RR}{\ensuremath{\mathbb{R}}} 
\newcommand{\CC}{\ensuremath{\mathbb{C}}} 
\newcommand{\defeq}{:=} 
\newcommand{\id}{\mathrm{id}} 
\newcommand{\inv}{^{-1}} 
\newcommand{\esssup}{\mathop\mathrm{esssup}} 
\newcommand{\supp}{\mathrm{supp}} 
\newcommand{\lspan}{\mathrm{span}} 
\newcommand{\Aut}{\mathrm{Aut}} 
\newcommand{\dualp}[2]{\left\langle #1 , #2 \right\rangle} 
\newcommand{\Bd}[1][\HH]{\mathcal{B}(#1)} 
\newcommand{\Cpt}[1][\HH]{\mathcal{K}(#1)} 
\newcommand{\btens}{\mathbin{\overline{\otimes}}} 
\newcommand{\Sone}{\mathcal{S}_1} 
\newcommand{\Stwo}{\mathcal{S}_2} 
\newcommand{\Mult}{\mathcal{M}} 
\newcommand{\projtens}{\mathbin{\widehat{\otimes}}} 
\newcommand{\HH}{\ensuremath{\mathcal{H}}} 
\newcommand{\HHH}{\ensuremath{\mathcal{L}}} 
\newcommand{\ip}[2]{\left\langle #1 , #2 \right\rangle} 
\newcommand{\vN}{\mathrm{vN}} 
\newcommand{\cbm}{\mathrm{CB}} 
\newcommand{\cbw}{\mathrm{CB}^\sigma} 
\newcommand{\cb}{\mathrm{cb}} 
\newcommand{\Sch}{\mathfrak{S}} 
\newcommand{\HS}{\mathrm{HS}} 
\newcommand{\HSconv}{\mathfrak{S}_\mathrm{conv}} 
\newcommand{\HScent}{\mathfrak{S}_\mathrm{cent}} 
\newcommand{\Mcb}{\mathrm{M}_\cb} 
\newcommand{\nph}{\varphi} 
\newcommand{\cl}{\mathcal} 
\begin{document}

\title[Central and convolution multipliers]
{Central and convolution Herz--Schur multipliers}

\author[A. McKee]{Andrew McKee}
\address{Faculty of Mathematics, University of Bia\l ystok, Ul.\ Konstantego Cio\l \-kow\-ski\-ego 1M, Bia\l ystok 15-245, Poland}
\email{amckee240@qub.ac.uk}

\author[R. Pourshahami]{Reyhaneh Pourshahami}
\address{Department of Mathematics, Kharazmi University, 50 Taleghani Ave., 15618, Tehran, Iran}
\email{std\_reyhaneh.pourshahami@khu.ac.ir}

\author[I. G. Todorov]{Ivan G. Todorov}
\address{
School of Mathematical Sciences, University of Delaware, 501 Ewing Hall,
Newark, DE 19716, USA, and 
Mathematical Sciences Research Centre,
Queen's University Belfast, Belfast BT7 1NN, United Kingdom}
\email{todorov@udel.edu}

\author[L. Turowska]{Lyudmila Turowska}
\address{Department of Mathematical Sciences, Chalmers University of Technology and the University of Gothenburg, Gothenburg SE-412 96, Sweden}
\email{turowska@chalmers.se}

\date{1 January 2021}

\begin{abstract}
We obtain descriptions of central operator-valued Schur and Herz--Schur multipliers, akin to a 
classical characterisation due to Grothendieck, that reveals a close link between central (linear) multipliers and 
bilinear multipliers into the trace class. 
Restricting to dynamical systems where a locally compact group acts on itself by translation, 
we identify their convolution multipliers as the right completely bounded multipliers, in the sense of 
Junge--Neufang--Ruan, of a canonical quantum group associated with the underlying group. 
We provide characterisations of contractive idempotent operator-valued Schur and Herz--Schur multipliers. 
Exploiting the link between Herz--Schur multipliers and multipliers on transformation
groupoids,  we provide a combinatorial characterisation of groupoid multipliers that are 
contractive and idempotent. 
\end{abstract}

\maketitle

\tableofcontents

\section{Introduction}
\label{sec:intro}

Schur multipliers originated in 
the work of Schur  on the entry-wise (or Hadamard) product of matrices
in the early twentieth century. 
These are complex-valued functions, defined on the cartesian product $X\times Y$ of 
two measure spaces $(X,\mu)$ and $(Y,\nu)$ that give rise to 
completely bounded maps on the space $\cl K$ of all compact operators from $L^2(X,\mu)$ into $L^2(Y,\nu)$, acting by pointwise multiplication on the integral kernels of the operators from the Hilbert-Schmidt class. 
A concrete description of these objects, which has found numerous applications thereafter, 
was given by  Groth\'endieck in his Resum\'{e} \cite{Gro53}.
Since then, Schur multipliers have played a significant role in operator theory, the theory of Banach spaces, the theory of operator spaces, and have been linked to perturbation theory through the concept of double operator integrals
(see \cite{CleMS, lemerdy-tt} and the references therein).

The theory of Herz--Schur, or completely bounded, multipliers of the Fourier algebra of a locally compact group
originated in the work of Herz~\cite{He74}, where they were viewed as a generalisation of Fourier--Stieltjes transforms. 
Similarly to Schur multipliers, Herz--Schur multipliers are complex-valued functions, this time defined on a locally compact group $G$, that give rise to completely bounded maps 
on the reduced $C^*$-algebra $C^*_r(G)$ of $G$, acting by pointwise multiplication 
on its subalgebra $L^1(G)$. 
An important development in the subject were the works of Gilbert and 
of Bo\.{z}ejko and Fendler~\cite{BoFe84}, showing that the Herz--Schur multipliers on the locally compact group $G$ can be 
isometrically identified with the space of all Schur multipliers on $G\times G$ of Toeplitz type.
Haagerup \cite{Haagerup}
pioneered the use of Herz--Schur multipliers to study the approximation properties of operator algebras
(see also \cite{CaHa85}).

Recently, several generalisations of Schur and Herz--Schur multipliers to the `operator-valued' case have appeared: 
B\'{e}dos and Conti~\cite{BeCo16} introduced multipliers of a $C^*$-dynamical system based on 
a Hilbert module version of the Fourier--Stieltjes algebra, and applied these techniques to study $C^*$-crossed products
while, in \cite{mtt}, 
three of the present authors defined
Schur and Herz--Schur multipliers with values in the space of all completely bound\-ed maps on a 
$C^*$-algebra and obtained a version of the Bo\.{z}ejko--Fendler correspondence.
The use of multiplier techniques to study reduced crossed products, following Haagerup's work, 
has been furthered by Skalski and three of the present authors in \cite{mstt}, 
by the first author in \cite{McKee}, and by the first and the fourth authors in \cite{mck-t}.


In this paper we consider special cases of the multipliers defined in \cite{mtt}. 
We define central Schur and Herz--Schur multipliers 
in Definition~\ref{d_ce} and Definition~\ref{de:centralHSmult}, respectively. 
They are associated with completely bounded maps on a $C^*$-algebra $A$ that 
are multiplication operators by elements of the centre of the multiplier algebra of $A$, 
and are one of the most common type of multipliers that arises in specific circumstances. 
A special case of particular importance arises when $A$ is abelian. 
Given a central Herz--Schur multiplier of the 
$C^*$- dynamical system $(A, G,\alpha)$, 
the corresponding completely bounded map on the crossed product is an $A$-bimodule map. Such maps were considered by Dong and Ruan~\cite{DR12} in their study of the Hilbert module Haagerup property of crossed products.  
Exploiting the fact that commutative (unital) $C^*$-algebras are simply algebras of continuous functions on compact topological spaces,
we identify the central Schur and Herz--Schur multipliers 
with scalar-valued functions on three and two variables, respectively. 
This allows us to identify a close link, that seems to have remained unnoticed until now, 
between central multipliers and the bilinear Schur multipliers into the trace class, 
introduced and characterised by Coine, Le Merdy and Sukochev in \cite{CleMS} (see also \cite{lemerdy-tt}).

A $C^*$-dynamical system of particular importance is $(C_0(G),G,\beta)$, where $G$ is a locally compact group, 
$C_0(G)$ is the $C^*$-algebra of all continuous functions on $G$ vanishing at infinity, and $\beta$ is the left translation action
of $G$ on $C_0(G)$. 
The second main class of maps we are concerned with are 
the convolution multipliers of $(C_0(G),G,\beta)$ introduced in \cite{mtt}.
We answer \cite[Question 6.6]{mtt}, identifying the Herz--Schur multipliers 
of the latter dynamical system with the right multipliers of a canonical quantum group associated with $G$; 
in the case where $G$ is abelian, we show that these multipliers coincide with the 
elements of the Fourier--Stieltjes algebra $B(G \times \Gamma)$, where $\Gamma$ is the dual group of $G$.

%
%

Finally, we investigate when the special classes of multipliers considered in this paper give rise to \emph{idempotent} completely bounded maps.
The general study of idempotent Herz--Schur multipliers goes back to Cohen\cite{coh60}, who characterised all idempotent elements of the measure algebra $M(G)$.
In \cite{Hos86}, Host generalised Cohen's characterisation by identifying the
general form of idempotents in $B(G)$, for any locally compact group $G$,
while Katavolos and Paulsen in \cite{KP05}  and Stan in \cite{Stan09} gave characterisations of contractive 
idempotent Schur multipliers and contractive idempotent Herz-Schur multipliers respectively, based on a combinatorial 3-of-4 property.
In this paper, we use the 3-of-4 property 
to obtain characterisations of 
various classes of central idempotent  Schur multipliers and idempotent Herz--Schur multipliers  of dynamical systems. 

The paper is organised as follows. 
Section~\ref{sec:prelims} contains background material, 
including a review of crossed products and multipliers as introduced in \cite{mtt}.
The section also includes some preliminary results that will be needed later. 
In Section~\ref{sec:centmults} we define central Schur $A$-multipliers, and present a characterisation of the central Schur $C_0(Z)$-multipliers, followed by a similar characterisation of central Schur $A$-multipliers for an arbitrary $C^*$-algebra $A$.
After introducing central Herz--Schur multipliers, we characterise the central Herz--Schur $(A, G,\alpha)$-multipliers,
the central Herz--Schur $(C_0(Z), G,\alpha)$-multipliers, as well as their canonical positive cones. 
Convolution multipliers are considered in Section \ref{sec:idemp}, first in the abelian and then in the general case. 
Therein, we also investigate idempotent multipliers within the classes of central and convolution 
multipliers from Section~\ref{sec:centmults} and Section~\ref{sec:convmults}.

%
%
\section{Preliminaries}
\label{sec:prelims}

Throughout this paper, 
we make standing separability assumptions: 
unless otherwise stated, 
we consider only separable $C^*$-algebras, separable Hilbert spaces and second-countable locally compact groups. 
These assumptions allow us to consider multipliers defined on standard measure spaces.
We however note that the results remain valid for the case of discrete spaces with counting measure, 
in which case the separability assumptions can be dropped.

\subsection{General background}
\label{ssec:generalprelims}

\subsubsection{Measure spaces}

We fix for the whole paper standard measure spaces $(X, \mu)$ and $(Y, \nu)$; 
this means that there exist locally compact, metrisable, complete, separable topologies on $X$ and $Y$ (called admissible topologies), with respect to which $\mu$ and $\nu$ are regular Borel $\sigma$-finite measures. 
The direct products $X\times Y$ and $Y\times X$ are equipped with the corresponding product measures. 
We use standard notation for the $L^p$ spaces over $(X, \mu)$ and $(Y, \nu)$ ($p = 1,2,\infty$); we will also consider (not necessarily countable) sets equipped with counting measure, in which case we write $\ell^p(X)$ in place of $L^p(X)$. 

Given a Banach space $B$, the space $L^p(X , B)$ ($p = 1,2$) is the space of (equivalence classes of) 
Bochner $p$-integrable functions from $X$ to $B$ with respect to $\mu$; 
each of these spaces contains the algebraic tensor product $C_c(X) \odot B$ as a dense subspace. 
The identification $L^2(X , \HH) \cong L^2(X) \otimes \HH$ will be used frequently; 
here, and in the sequel, we denote by $\HHH \otimes \HH$ Hilbertian tensor product of 
Hilbert spaces $\HHH$ and $\HH$. 
We refer to Williams~\cite[Appendix B.I.4]{Wi07} for further details. 

Let $\Bd[\HH,\HHH]$ be the space of all bounded linear operators from $\HH$ into $\HHH$; we write as usual
$\cl B(\HH) = \Bd[\HH,\HH]$. 
For a weak$^*$-closed  subspace $M \subseteq \Bd[\HH,\HHH]$ we let $L^\infty(X,M)$ denote the space of 
(equivalence classes of) bounded functions $f : X \to M$ such that, 
for each $x \in X$ and $\xi \in L^2(X,\HH)$, $\eta\in L^2(X, \HHH)$, 
the functions $x \mapsto f(x)(\xi(x))$ and $x \mapsto f(x)^*(\eta(x))$ are weakly measurable as functions from 
$X$ to $\HH$ and from $X$ to $\HHH$, respectively. 
We equip $L^\infty(X,M)$ with the norm 
$\| f \| \defeq \esssup_{x \in X}\| f(x) \|$ and identify  each $f\in L^\infty(X,M)$ with the operator $D_f$ from $L^2(X,\HH)$ to $L^2(X,\HHH)$ given by $(D_f\xi)(x)=f(x)\xi(x)$. 
See Takesaki~\cite[Section IV.7]{Tak80} for details. 
We write $L^\infty(X,\HH)$ for the space of (equivalence classes of) bounded weakly measurable 
$\HH$-valued functions on $X$. 

Since we have a standing second-countability assumption for locally compact groups (except when we specify a discrete group) our groups are metrisable as topological spaces, and are hence standard measure spaces when equipped with left Haar measure.

\subsubsection{Operator spaces} 

Consider (concrete) operator spaces $V \subseteq \Bd$ and $W \subseteq \Bd[\HHH]$.
The norm-closed spatial tensor product of $V$ and $W$ will be written $V \otimes W$, while if $V$ and $W$ are weak*-closed, their weak*-spatial tensor product will be denoted $V \btens W$.
The operator space projective tensor product $V \projtens W$ satisfies the canonical completely isometric identifications $(V \projtens W)^* = \cbm (V, W^*) = \cbm (W, V^*)$ \cite[Corollary 7.1.5]{EfRu00}; 
if $M$ and $N$ are von~Neumann algebras, $V = M_*$ and $W = N_*$, 
then $(V \projtens W)^* = M \btens N$, up to a complete isometry \cite[Theorem 7.2.4]{EfRu00}.
For $u \in M_n(V \odot W)$ let $\| u \|_h= \inf \{ \| a \| \| b \|\}$, where the infimum is taken over all integers $p$, 
and all matrices $a\in M_{n,p}(V)$ and $b \in M_{p,n}(W)$, such that $u_{i,j} = \sum_k a_{i,k} \otimes b_{k,j}$; 
the Haagerup tensor product $V \otimes^h W$ is the completion  of the operator space
$V \odot W$ in $\| \cdot \|_h$; see \cite[Chapter 9]{EfRu00} for further details. 

For an index set $I$, we will write $C_I^\omega(V)$ for the operator space of families $(x_i)_{i \in I} \subseteq V$ such that the sums $\sum_{i \in J} x_i^* x_i$ are uniformly bounded over 
all finite sets $J \subseteq I$; equivalently, $C_I^\omega(M) = \ell^2(I)_c \btens M$,
where $\ell^2(I)_c$ denotes $\ell^2(I)$, equipped with the column operator space structure. 
Similarly, $R_I^\omega(V)$ denotes the operator space of families 
$(x_i)_{i \in I} \subseteq V$ such that the sums $\sum_{i \in J} x_i x_i^*$ are uniformly bounded 
over all finite sets $J \subseteq I$; equivalently, $R_I^\omega(M) = \ell^2(I)_r \btens M$, where $\ell^2(I)_r$ 
denotes $\ell^2(I)$, equipped with the row operator space structure. 
Further details on the row and column spaces can be found in \cite{EfRu00} and \cite{Pisier-book}.
If $V$ and $W$ are dual operator spaces then their weak* Haagerup tensor product will be written $V \otimes^{w^*h} W$; a typical element $u \in V \otimes^{w^*h} W$ is $u = \sum_{i \in I} f_i \otimes g_i$, where $I$ is some cardinal, $f = (f_i)_{i \in I} \in R^\omega_I(V)$ and $g = (g_i)_{i \in I} \in C^\omega_I(W)$; see \cite{BS92} for further details.

\subsubsection{The trace and Hilbert--Schmidt classes}

Let $\HH$ and $\HHH$ denote Hilbert spaces. 
We write $\Cpt[\HH,\HHH]$ (resp.\ $\Sone(\HH,\HHH)$) 
for the compact (resp.\ trace class) operators from 
$\HH$ to $\HHH$ and simplify $\Cpt[\HH] := \Cpt[\HH,\HH]$, {\it etc.}
The space $\Sone(\HH,\HHH)$ is equipped with the norm $\| T \|_1 := \mathrm{tr}(|T|)$. 
Recall that, \textit{via} trace duality, we have isometric identifications
\[
	\Sone(\HH,\HHH) \cong \Cpt[\HHH , \HH]^* \quad \text{ and } \quad \Bd[\HHH , \HH] \cong \Sone(\HH,\HHH)^*.
\] 
The space of Hilbert--Schmidt operators $T :\HH \to \HHH$, with the norm $\| T \|_2 :=( \mathrm{tr}(T^*T))^{1/2}$, 
will be denoted $\Stwo(\HH,\HHH)$.
These spaces will often appear with $\HH = L^2(X,\mu)$ and $\HHH = L^2(Y,\nu)$, in which case we will write $\Sone(X , Y)$, $\Stwo(X)$, \textit{etc}.

\subsubsection{Crossed products}\label{sss_cp}

Let $A$ be a $C^*$-algebra, viewed as a subalgebra of $B(\HH_A)$, where $\HH_A$ denotes the Hilbert space of the universal representation of $A$. 
Let $G$ be a locally compact group with modular function $\Delta$, equipped with left Haar measure $m_G$,
and $\alpha: G \to \Aut(A)$ be a group homomorphism which is continuous in the point-norm topology, 
\textit{i.e.}\ for all $a\in A$ the map $s\mapsto \alpha_s(a)$ is continuous from $G$ to $A$; 
we say $(A, G,\alpha)$ is a $C^*$-dynamical system. 
The space $L^1(G,A)$ is a Banach $*$-algebra when equipped with the product $\times$ given by
\[
	(f\times g)(t) \defeq \int_G f(s)\alpha_s \big( g(s^{-1}t) \big) ds,\quad f,g\in L^1(G, A),\ t\in G,
\]
the involution $\ast$ defined by
\[
	f^*(s) \defeq \Delta(s)^{-1} \alpha_s \big( f(s^{-1})^* \big), \quad f\in L^1(G, A), s\in G,
\]
and the $L^1$-norm $\|f\|_1 \defeq \int_G \|f(s)\|ds$. 
These definitions also give a $\ast$-algebra structure on $C_c(G, A)$, which is a dense $\ast$-subalgebra of $L^1(G, A)$. 
Given a faithful representation $\theta : A \to \Bd[\HH_\theta]$, 
we define new representations of $A$ and $G$ on $L^2(G,\HH_\theta)$ as follows:
\begin{gather*}
	\pi^\theta : A \to \Bd[L^2(G, \HH_\theta)]; \ \big( \pi^\theta(a) \xi \big)(t) \defeq \theta \big( \alpha_{t\inv}(a) \big) \big( \xi(t) \big) , \\
	\lambda^\theta: G \to \cl B(L^2(G, \HH_\theta));\ (\lambda^\theta_t \xi)(s):= \xi(t^{-1}s),
\end{gather*}
for all $a\in A$, $s,t\in G$ , $\xi\in L^2(G, \HH_\theta)$. 
Then $\lambda^\theta$ is a (strongly continuous) unitary representation of $G$ and 
\[
	\pi^\theta \big( \alpha_t(a) \big) = \lambda^\theta_t \pi^\theta(a) (\lambda^\theta_t)^*, \quad a\in A,\ t\in G.
\]
The pair $(\pi^\theta, \lambda^\theta)$ is thus 
a covariant representation of $(A, G,\alpha)$ and therefore gives rise to a 
$\ast$-representation $\pi^\theta \rtimes \lambda^\theta : L^1(G, A) \to B(L^2(G, \HH_\theta))$ given by 
\[
	(\pi^\theta \rtimes \lambda^\theta)(f) \defeq \int_G \pi^\theta \big( f(s) \big) \lambda^\theta_s ds , \quad f \in L^1(G, A) .
\]
The reduced crossed product $A\rtimes _{\alpha, r}G$
of $A$ by $G$ is independent of the choice of the faithful representation $\theta$
and is defined as the closure of $(\pi^\theta \rtimes \lambda^\theta)(L^1(G, A))$ in the operator norm of $B(L^2(G, \HH_\theta))$; 
if we want to emphasise the representation $\theta$ of $A$ was used, we will write $A\rtimes _{\alpha, \theta}G$.
In Section~\ref{sec:convmults} we will use the weak* closure $A\rtimes_{\alpha, r}^{w^{\ast}} G$ 
of $A\rtimes _{\alpha, r}G$.
In what follows we will often simplify our notation by omitting the superscript $\theta$. 
More on reduced crossed products can be found in Pedersen~\cite[Chapter 7]{Pe79}, and Williams~\cite{Wi07}.

\subsection{Multipliers}
\label{ssec:multiplierprelims}

We will use some well-known results on classical Schur and Herz--Schur multipliers, as well as results 
from \cite{mtt}. We recall some definitions and results required later.

\subsubsection{Schur multipliers}

Let $(X,\mu)$ and $(Y,\nu)$ be standard measure spaces.
We say $E \subseteq X \times Y$ is \emph{marginally null} if there exist null sets $M \subseteq X$ and $N \subseteq Y$ such that $E \subseteq (M \times Y) \cup (X \times N)$. 
Two measurable sets $E,F \subseteq X \times Y$ are called \emph{marginally equivalent} if their symmetric difference is marginally null; 
we say that two functions $\varphi , \psi : X \times Y \to \CC$ are \emph{marginally equivalent} if they are equal up to a marginally null set. 
A measurable set $E \subseteq X \times Y$ is called \emph{$\omega$-open} 
if it is marginally equivalent to a set of the form $\cup_{k \in \NN} I_k\times J_k$, where $I_k \subseteq X$ and $J_k \subseteq Y$ are measurable, $k\in \mathbb{N}$. 
The collection of $\omega$-open subsets of $X \times Y$ is a \emph{pseudo-topology} on $X \times Y$ --- it is closed under finite intersections and countable unions; see \cite[Section 3]{EKS}.
A function $h : X\times Y\to \mathbb{C}$ is called {\it $\omega$-continuous} \cite{EKS} if 
$h^{-1}(U)$ is $\omega$-open for every open set $U\subseteq \mathbb{C}$.

Let $\HH$ be a separable Hilbert space and $A\subseteq \Bd$ be a separable $C^*$-algebra. 
With any $k\in L^2(Y\times X, A)$, one can associate an element $T_k\in \Bd[L^2(X, \HH), L^2(Y, \HH)]$ with $\|T_k\|\leq \|k\|_2$, 
by letting
\[
	(T_k \xi)(y) \defeq \int_X k(y,x) (\xi(x)) dx , \quad \xi \in L^2(X, \HH),\ y \in Y.
\]
The linear space of all such operators is denoted by $\Stwo(X,Y ; A)$ and is norm dense in $\Cpt[L^2(X), L^2(Y)]\otimes A$;
we equip it with the operator space structure arising from this inclusion. 
Note that if $A = \mathbb{C}$ then 
the map $k\to T_k$ is an isometric identification of $L^2(Y \times X)$ and $\Stwo(X,Y)$.

If $B$ is a(nother) $C^*$-algebra we write $\cbm(A,B)$ for the space of completely bounded maps from $A$ to $B$ 
and set $\cbm(A) = \cbm(A,A)$. 
We say that $\varphi:X\times Y\to \cbm(A,B)$ is pointwise-measurable if $(x,y)\mapsto\varphi(x,y)(a)\in B$ is weakly measurable for each $a\in A$. 
If $\varphi: X\times Y \to \cbm(A)$ is a bounded, pointwise-measurable function, 
we define $\varphi \cdot k \in L^2(Y\times X, A)$ by
\[
	(\varphi \cdot k)(y,x) \defeq \varphi(x,y) \big( k(y,x) \big), \quad (y, x) \in Y \times X . 
\]
Let $S_\varphi$ denote the bounded linear map on $\Stwo(X,Y ; A)$ given by
\[
	S_\varphi (T_k) \defeq T_{\varphi \cdot k}, \quad k\in L^2(Y\times X, A) .
\]
 
\begin{definition}\label{de:SchurAmult}
A bounded, pointwise-measurable function $\varphi: X\times Y \to \cbm(A)$ is 
called a \emph{Schur $A$-multiplier} if $S_\varphi$ is a completely bounded map on $\Stwo(X,Y; A)$. 
We denote the space of such functions by $\mathfrak{S}(X,Y;A)$ and endow it with the norm $\|\varphi\|_{\Sch(X,Y;A)} := \|S_{\varphi}\|_\cb$ (we write $\|\varphi\|_{\Sch}$ when $X,Y$ and $A$ are clear from context). 
\end{definition}

\noindent
This definition does not depend on the faithful $*$-representation of $A$ on a separable Hilbert space  \cite[Proposition 2.3]{mtt}.

\begin{theorem}\cite[Theorem 2.6]{mtt}\label{th:Schurmultsmtt}
Let $A\subseteq \Bd$ be a separable $C^*$-algebra and $\varphi : X\times Y \to \cbm(A)$ a bounded, pointwise measurable  function. 
The following are equivalent:
\begin{enumerate}[i.] 
	\item $\varphi$ is a Schur $A$-multiplier;
 	\item there exist a separable Hilbert space $\HH_\rho$, a non-degenerate $*$-representa- tion 
	$\rho: A \to \cl B(\HH_\rho)$, and $\mathcal{V}\in L^\infty(X, \Bd[\HH,\HH_\rho])$, $\mathcal{W}\in L^\infty(Y, \Bd[\HH,\HH_\rho])$ such that
 	\[
 		\varphi(x,y)(a) = \mathcal{W}(y)^* \rho(a) \mathcal{V}(x), \quad a\in A
 	\]
 	for almost all $(x,y)\in X\times Y$.
\end{enumerate}
Moreover, if these conditions hold then we may choose $\mathcal{V}$ and $\mathcal{W}$ so that
\[
	\| \varphi \|_\Sch = \esssup_{x \in X} \| \mathcal{V}(x) \| \esssup_{y \in Y} \| \mathcal{W}(y) \| .
\]
\end{theorem}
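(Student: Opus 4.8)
The plan is to treat the two implications separately: (ii)$\Rightarrow$(i) by an explicit factorisation of $S_\varphi$, and (i)$\Rightarrow$(ii) by a Stinespring--Wittstock dilation argument, in which the representation theory of the compact operators isolates $\rho$ and the structure theory of decomposable operators identifies $\mathcal{V}$ and $\mathcal{W}$.

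Suppose (ii) holds. The formula $(D_{\mathcal{V}}\xi)(x)\defeq\mathcal{V}(x)\xi(x)$ defines an operator $D_{\mathcal{V}}\in\Bd[L^2(X,\HH),L^2(X,\HH_\rho)]$ with $\|D_{\mathcal{V}}\|=\esssup_{x\in X}\|\mathcal{V}(x)\|$, and likewise $D_{\mathcal{W}}\in\Bd[L^2(Y,\HH),L^2(Y,\HH_\rho)]$. Since $\rho$ is a $*$-representation, $\id\otimes\rho$ restricts to a complete contraction $\rho^{(2)}$ from $\Cpt[L^2(X),L^2(Y)]\otimes A$ into $\Bd[L^2(X,\HH_\rho),L^2(Y,\HH_\rho)]$ which sends $T_k$ to $T_{\rho\circ k}$. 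Comparing the defining integral formulas yields
\[
	S_\varphi(T_k)=T_{\varphi\cdot k}=D_{\mathcal{W}}^*\,\rho^{(2)}(T_k)\,D_{\mathcal{V}},\qquad k\in L^2(Y\times X,A),
\]
so $S_\varphi$ equals right multiplication by $D_{\mathcal{V}}$, followed by $\rho^{(2)}$, followed by left multiplication by $D_{\mathcal{W}}^*$; as each factor is completely bounded, $\varphi$ is a Schur $A$-multiplier with $\|\varphi\|_\Sch\le\|D_{\mathcal{V}}\|\,\|D_{\mathcal{W}}\|=\esssup_{x\in X}\|\mathcal{V}(x)\|\,\esssup_{y\in Y}\|\mathcal{W}(y)\|$.

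For (i)$\Rightarrow$(ii) I would argue as follows. Density of $\Stwo(X,Y;A)$ in $\Cpt[L^2(X),L^2(Y)]\otimes A$ lets us extend $S_\varphi$ to a completely bounded map on the latter, of the same cb-norm. This operator space is the off-diagonal corner $p_Y\cl A p_X$ of the separable $C^*$-algebra $\cl A\defeq\Cpt[L^2(X)\oplus L^2(Y)]\otimes A$, where $p_X,p_Y\in M(\cl A)$ are the coordinate projections; and, since each $\varphi(x,y)$ is linear, $S_\varphi$ is a bimodule map over the diagonal masas $L^\infty(X)$ and $L^\infty(Y)$, i.e.\ $S_\varphi(M_bT_kM_a)=M_b\,S_\varphi(T_k)\,M_a$. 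Working inside a separable unital $C^*$-algebra $\cl B$ that contains $\cl A$, the projections $p_X,p_Y$, and separable weak$^*$-dense $C^*$-subalgebras of $L^\infty(X)$ and $L^\infty(Y)$ (available by second-countability), I would apply Paulsen's off-diagonalisation to obtain a unital completely positive map on the operator system $\CC p_X+\CC p_Y+p_Y\cl A p_X+p_X\cl A p_Y$, extend it by Arveson's theorem to a unital completely positive map on $\cl B$ that remains bimodular over the two diagonal subalgebras, and take a minimal Stinespring dilation $\Pi\colon\cl B\to\Bd[\HHH_\Pi]$ with $\HHH_\Pi$ separable. Restricting $\Pi$ to the ideal $\cl A$ and using that every non-degenerate representation of $\Cpt[L^2(X)\oplus L^2(Y)]\otimes A$ is of the form $\id\otimes\rho$ for a non-degenerate representation $\rho$ of $A$ produces the required $\rho\colon A\to\Bd[\HH_\rho]$ with $\HH_\rho$ separable. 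Because the dilation is bimodular, compressing it to the appropriate corners produces operators that intertwine the diagonal masas with their images; any such operator is decomposable, i.e.\ of the form $D_{\mathcal{V}}$ or $D_{\mathcal{W}}$ for a weakly measurable essentially bounded function $\mathcal{V}\colon X\to\Bd[\HH,\HH_\rho]$, respectively $\mathcal{W}\colon Y\to\Bd[\HH,\HH_\rho]$, the measurability being automatic over standard measure spaces (cf.\ \cite[Section~IV.7]{Tak80}). Evaluating the factorisation $S_\varphi(T_k)=D_{\mathcal{W}}^*(\id\otimes\rho)(T_k)D_{\mathcal{V}}$ on kernels of the form $k(y,x)=\chi_F(y)\chi_E(x)a$ then gives $\varphi(x,y)(a)=\mathcal{W}(y)^*\rho(a)\mathcal{V}(x)$ for a.e.\ $(x,y)$ and all $a$ in a countable dense subset of $A$, hence, by continuity, for all $a\in A$. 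Finally, the norm bound in Wittstock's/Paulsen's theorem gives $\esssup_{x}\|\mathcal{V}(x)\|\,\esssup_{y}\|\mathcal{W}(y)\|=\|D_{\mathcal{V}}\|\,\|D_{\mathcal{W}}\|\le\|\varphi\|_\Sch$, which combined with the estimate of the previous paragraph --- after rescaling $\mathcal{V}\mapsto t\mathcal{V}$, $\mathcal{W}\mapsto t^{-1}\mathcal{W}$ to balance the two factors --- yields the equality in the ``moreover'' part.

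The substance of the argument is entirely in (i)$\Rightarrow$(ii). The main obstacle is to run the off-diagonalisation/Arveson/Stinespring procedure while simultaneously (a) keeping every auxiliary $C^*$-algebra and Hilbert space separable, so that the resulting $\HH_\rho$ is separable; (b) arranging the dilation to be bimodular over the two diagonal masas, which is precisely what forces the dilating operators to be multiplication operators rather than arbitrary bounded operators; and (c) controlling measurability so that $\mathcal{V}$ and $\mathcal{W}$ are genuine elements of $L^\infty(X,\Bd[\HH,\HH_\rho])$ and $L^\infty(Y,\Bd[\HH,\HH_\rho])$ --- it is here that the standing standard-measure-space and separability hypotheses, together with the pointwise measurability of $\varphi$, enter essentially. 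A possible shortcut is to invoke instead a ready-made representation theorem for completely bounded masa-bimodule maps (or the weak$^*$ Haagerup tensor description of such maps), at the cost of translating between the two formalisms.
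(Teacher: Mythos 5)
First, a point of reference: the paper does not prove this statement --- it is quoted verbatim from \cite[Theorem 2.6]{mtt} --- so there is no internal proof to compare against. Your overall route is the standard one for results of this type and matches the spirit of the argument in \cite{mtt}: the factorisation $S_\varphi(T_k)=D_{\mathcal{W}}^*(\id\otimes\rho)(T_k)D_{\mathcal{V}}$ handles (ii)$\Rightarrow$(i) correctly, the bimodularity of $S_\varphi$ over the (ampliated) masas $\mathcal{D}_Y,\mathcal{D}_X$ is the right structural input for (i)$\Rightarrow$(ii), and the norm equality is obtained correctly by playing the two inequalities against each other.

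There is, however, one step as written that does not go through. The operator system you off-diagonalise over, $\CC p_X+\CC p_Y+p_Y\cl A p_X+p_X\cl A p_Y$, contains only scalars on the diagonal, so the unital completely positive map you build on it carries no record of the bimodularity of $S_\varphi$; an arbitrary Arveson extension to $\cl B$ therefore cannot ``remain bimodular over the two diagonal subalgebras'' --- there is nothing for it to remain. Without that, the Stinespring operators $V,W$ need not intertwine the masa actions, and the decomposability step (which, as you note yourself in point (b), is the crux) collapses. The repair must be built in explicitly: enlarge the operator system to $\cl C_Y\oplus\cl C_X+p_Y\cl A p_X+p_X\cl A p_Y$, where $\cl C_X,\cl C_Y$ are the chosen separable diagonal $C^*$-algebras, and declare the completely positive map to be the identity on the diagonal corner; the positivity of this larger map is the bimodule form of Paulsen's off-diagonalisation and genuinely uses the $\cl C_Y,\cl C_X$-bimodularity of $S_\varphi$ in addition to $\|S_\varphi\|_\cb\le 1$ (one factors the off-diagonal entry through the square roots of the diagonal entries). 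Then the multiplicative-domain argument --- $\Phi(c)=c$ together with $\Phi(c^*c)=c^*c$ forces $\Pi(c)V=Vc$ --- delivers the intertwining; alternatively, average the dilation over the unitary group of the injective abelian masas, or simply quote the representation theorem for completely bounded masa-bimodule maps, as you suggest at the end. With that step repaired, the remaining ingredients (separability bookkeeping, the identification of non-degenerate representations of $\cl A$ with $\id\otimes\rho$ up to unitary equivalence, decomposability of intertwiners over a standard measure space, and the a.e.\ recovery of $\varphi$ from rectangle kernels over a countable dense subset of $A$) are all sound.
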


Note that the definitions and theorems make sense in the case $X$, $Y$ are discrete spaces with counting measures, in which case we do not need to assume separability. 

When discussing Schur $A$-multipliers we shall always assume without mentioning that $A$ is separable unless $X$ and $Y$ are discrete spaces with counting measures in which case $A$ can be arbitrary.

In the case where $A = \mathbb{C}$, Schur $A$-multipliers reduce to classical (measurable) Schur multipliers 
\cite{Peller}.
The elements $\sum_{i=1}^{\infty} f_i \otimes g_i$ 
of the projective tensor product $\Sone(Y,X) = L^2(X,\mu) \projtens L^2(Y,\mu)$ 
(where we assume $\sum_{i=1}^{\infty} \|f_i\|^2 < \infty$ and $\sum_{i=1}^{\infty} \|g_i\|^2 < \infty$)
can be identified with functions $\sum_{i=1}^{\infty} f_i(x) g_i(y)$ on $X\times Y$, well-defined up to a 
marginally null set \cite{Arv74}; under this identification, 
Schur multipliers coincide with the multipliers of $\Sone(Y,X)$.

Given $a\in L^{\infty}(X, \mu)$, let $M_a$ be the operator on $L^2(X, \mu)$ defined by 
$$(M_a \xi)(x) \defeq a(x) \xi(x), \ \ \ x\in X.$$
Let $\mathcal{D}_X= \{ M_a : a\in L^\infty(X, \mu) \}$ and define $\mathcal{D}_Y$ analogously.
By a well-known result of Haagerup~\cite{Ha80} (see also\cite{BS92}), 
there is a completely isometric weak*-homeomorphism between the algebra of weak*-continuous, completely bounded $\mathcal{D}_Y,\mathcal{D}_X$-bimodule maps on $\Bd[L^2(X),L^2(Y)]$ and the weak* Haagerup tensor product 
$\mathcal{D}_Y\otimes^{w^*h}\mathcal{D}_X$ \cite{BS92}; 
this homeomorphism sends
$\sum_{k=1}^{\infty} b_k\otimes a_k\in \mathcal{D}_Y \otimes^{w^*h} \mathcal{D}_X$ to the map 
\[
T \mapsto \sum_{k=1}^{\infty} b_k T a_k
\]
on $\Bd[L^2(X),L^2(Y)]$.
Note that $\mathcal{D}_Y\otimes^{w^*h}\mathcal{D}_X$ can be viewed as a space of 
(equivalence classes of) functions, and each of these functions belongs to $\mathfrak{S}(X,Y)$. 
Theorem \ref{th:Schurmultsmtt} can be specialised as follows in the scalar-valued case.
  
\begin{theorem}
Let $\varphi \in L^{\infty}(X\times Y)$. The following are equivalent:
\begin{enumerate}[i.]
  		\item $\varphi\in \mathfrak{S}(X, Y)$ and $\|\varphi\|_{\mathfrak{S}} \leq C$;
  		\item there exists sequences $(a_k)_{k=1}^{\infty}\subseteq L^{\infty}(X, \mu)$ and $(b_k)_{k=1}^{\infty}\subseteq L^{\infty}(Y, \nu)$ with
  		\[
  			\esssup_{x\in X}\sum_{k=1}^{\infty}|a_k(x)|^2\leq C \quad \text{and} \quad \esssup_{y\in Y}\sum_{k=1}^{\infty}|b_k(y)|^2\leq C
  		\]
  		such that 
  		\[
  			\varphi(x,y) = \sum_{k=1}^{\infty} a_k(x)b_k(y) \quad \text{for almost all $(x,y) \in X \times Y$};
  		\]
  		\item there exist a separable Hilbert space $\HH$ and weakly measurable functions $v: X \to \HH$, $w: Y \to \HH$, such that
  		\[ 
  			\esssup_{x\in X} \|v(x)\| \leq \sqrt{C},\quad \esssup_{y\in Y} \|w(y)\| \leq \sqrt{C}
  		\]
  		and 
  		\[
  			\varphi(x,y) = \ip{v(x)}{w(y)}, \quad \text{for almost all $(x,y) \in X \times Y$}; 
  		\]
  		\item $\| T_{\varphi \cdot k} \| \leq C \| T_k \|$ for all $k\in L^2(Y\times X)$.
\end{enumerate}	 
\end{theorem}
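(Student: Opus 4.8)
The plan is to deduce this scalar-valued characterisation directly from Theorem~\ref{th:Schurmultsmtt} applied with $A = \CC$, together with a few standard manipulations. Setting $A = \CC$, a non-degenerate $*$-representation $\rho : \CC \to \Bd[\HH_\rho]$ is just the map $\lambda \mapsto \lambda \id_{\HH_\rho}$, and an element of $\Bd[\HH,\HH_\rho]$ applied to the unit scalar is determined by a single vector; more precisely $\mathcal V(x) \in \Bd[\CC,\HH_\rho]$ corresponds to $v(x) \defeq \mathcal V(x)(1) \in \HH_\rho$ with $\|\mathcal V(x)\| = \|v(x)\|$, and similarly $\mathcal W(y)$ corresponds to $w(y) \defeq \mathcal W(y)(1)$. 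Then $\mathcal W(y)^* \rho(1) \mathcal V(x) = \langle \mathcal V(x)(1), \mathcal W(y)(1)\rangle = \ip{v(x)}{w(y)}$, so condition (ii) of Theorem~\ref{th:Schurmultsmtt} becomes precisely (iii) above, and the norm equality $\|\varphi\|_\Sch = \esssup_x \|v(x)\|\,\esssup_y \|w(y)\|$ is inherited. Thus the equivalence of (i) and (iii), with the constant $C$ playing the role of $\|\varphi\|_\Sch$ (one takes $\sqrt C$ as the common bound on the two essential suprema), is immediate. The only point to check is weak measurability of $v$ and $w$, which follows from the pointwise-measurability hypothesis on $\varphi$ together with the fact that $\mathcal V \in L^\infty(X,\Bd[\CC,\HH_\rho])$ by construction.

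Next I would establish (iii)$\Leftrightarrow$(ii). For (iii)$\Rightarrow$(ii), fix an orthonormal basis $(e_k)_{k=1}^\infty$ of the separable Hilbert space $\HH$ and set $a_k(x) \defeq \overline{\ip{e_k}{v(x)}}$... more carefully, write $v(x) = \sum_k \overline{a_k(x)}\, e_k$ and $w(y) = \sum_k \overline{b_k(y)}\, e_k$ so that $\ip{v(x)}{w(y)} = \sum_k a_k(x)b_k(y)$, with $a_k \in L^\infty(X,\mu)$, $b_k \in L^\infty(Y,\nu)$ measurable (as coordinate functions of weakly measurable vector fields) and $\sum_k |a_k(x)|^2 = \|v(x)\|^2 \le C$, $\sum_k |b_k(y)|^2 = \|w(y)\|^2 \le C$ for almost every $x$, $y$. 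Conversely, (ii)$\Rightarrow$(iii): given such sequences, define $v : X \to \ell^2(\NN)$, $w : Y \to \ell^2(\NN)$ by $v(x) \defeq (\overline{a_k(x)})_k$ and $w(y) \defeq (\overline{b_k(y)})_k$; the hypotheses guarantee these land in $\ell^2$ with the stated norm bounds, they are weakly measurable, and $\ip{v(x)}{w(y)} = \sum_k a_k(x)b_k(y) = \varphi(x,y)$ almost everywhere. So (ii) and (iii) are just the expansion of a vector in coordinates.

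Finally, (i)$\Leftrightarrow$(iv) is essentially a reformulation of the definitions once one knows that the complete bound equals the bound for $n = 1$. Condition (iv) says $S_\varphi$ (as the map $T_k \mapsto T_{\varphi\cdot k}$ on $\Stwo(X,Y)$, identified via $k \mapsto T_k$ with $L^2(Y\times X)$) is bounded with $\|S_\varphi\| \le C$; condition (i) says it is completely bounded with $\cb$-norm $\le C$. One direction is trivial since $\|S_\varphi\| \le \|S_\varphi\|_\cb$. For the other, I would observe that either the scalar Schur multiplier $\varphi$ automatically satisfies $\|S_\varphi\|_\cb = \|S_\varphi\|$ — which follows from the already-established equivalence (i)$\Leftrightarrow$(iii) combined with the norm formula, since the Stinespring-type factorisation through a Hilbert space forces the complete bound to agree with the bound — or, alternatively, one can cite the classical fact (Smith's lemma, or the standard theory of Schur multipliers, e.g.\ \cite{Peller}, \cite{BS92}) that a Schur multiplier into $\Bd[L^2(X),L^2(Y)]$ has $\cb$-norm equal to its norm. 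Either way, (iv) gives a bounded $S_\varphi$, hence a factorisation $\varphi(x,y) = \ip{v(x)}{w(y)}$ with the right norm bound, hence $S_\varphi$ is completely bounded with $\|S_\varphi\|_\cb \le C$.

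The main obstacle I anticipate is the passage from boundedness to complete boundedness in (iv)$\Rightarrow$(i): one must be careful that the bound $C$ is preserved, not merely finiteness. The cleanest route is to route through (iii): a bound $\|T_{\varphi\cdot k}\| \le C\|T_k\|$ for all $k \in L^2(Y\times X)$ means $\varphi$ is a bounded multiplier of $\Sone(Y,X) = L^2(X)\projtens L^2(Y)$ with multiplier norm $\le C$, and by the classical Grothendieck-type characterisation (which is exactly the content being reproved here, so one should instead invoke Theorem~\ref{th:Schurmultsmtt} with $A=\CC$ applied to the \emph{a priori} weaker hypothesis, or appeal to Smith's lemma on $\cb$-norms of maps into a $\mathrm{C}^*$-algebra with abelian domain) this yields the factorisation through $\ell^2$ with the same constant. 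I would make sure to phrase this so that no circularity arises — the safest formulation is to prove (i)$\Leftrightarrow$(ii)$\Leftrightarrow$(iii) from Theorem~\ref{th:Schurmultsmtt} first, and then close the loop by showing (i)$\Rightarrow$(iv) trivially and (iv)$\Rightarrow$(iii) via Smith's lemma, which gives $\|S_\varphi\|_\cb = \|S_\varphi\|$ because the target $\Bd[L^2(X),L^2(Y)]$ can be realised so that Wittstock/Smith applies, or more directly because bounded weak$^*$-continuous $\mathcal D_Y,\mathcal D_X$-bimodule maps are automatically completely bounded with equal norms by the Haagerup--Blecher--Smith result quoted above, after checking $S_\varphi$ extends to such a bimodule map on $\Bd[L^2(X),L^2(Y)]$.
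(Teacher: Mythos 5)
Your proposal is correct and follows the route the paper itself intends: the statement appears there without proof, presented precisely as the specialisation of Theorem~\ref{th:Schurmultsmtt} to $A=\CC$, which is exactly how you obtain (i)$\Leftrightarrow$(iii), with (ii)$\Leftrightarrow$(iii) being the coordinate expansion in an orthonormal basis of $\HH$ (modulo harmless bookkeeping of complex conjugates). Your handling of the one genuinely delicate point, namely (iv)$\Rightarrow$(i), by routing through the automatic complete boundedness of bounded weak$^*$-continuous $\mathcal{D}_Y,\mathcal{D}_X$-bimodule maps (the Haagerup--Blecher--Smith result the paper quotes from \cite{Ha80} and \cite{BS92}) rather than circularly invoking Theorem~\ref{th:Schurmultsmtt}, is the standard and correct resolution.
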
     

We remark that if $X$ and $Y$ are discrete spaces with counting measures 
the theorem holds true with possibly uncountable families $(a_k)$ and $(b_k)$.

\subsubsection{Herz--Schur multipliers}

Let $G$ be a locally compact second countable group, $\vN(G)$ (resp. $C^*_r(G)$) be its von Neumann algebra
(resp. reduced $C^*$-algebra) and $A(G)$ be the Fourier algebra of $G$ \cite{eymard}. 
Let $A$ be a separable $C^*$-algebra. 
A bounded function $F: G \to \cbm(A)$ will be called pointwise-measurable if, for every $a\in A$, the map $s\mapsto F(s)(a)$ is a weakly measurable function from $G$ into $A$. 
 Suppose that the function $F : G \to \cbm(A)$ is bounded and pointwise-measurable, and define  
\[
	(F \cdot f)(s) \defeq F(s) \big( f(s) \big) , \quad f \in L^1(G,A),\ s \in G.
\]
Since $F$ is pointwise-measurable, $F \cdot f$ is weakly measurable, and $\| F \cdot f \|_1 \leq \sup_{s\in G} \|F(s)\| \| f \|_1$ ($f \in L^1(G,A)$); hence $F \cdot f\in L^1(G,A)$ for every $f\in L^1(G,A)$. 

\begin{definition}\label{de:HerzSchurmultofsystem}
A bounded, pointwise measurable function $F: G \to \cbm(A)$ will be called a \emph{Herz--Schur $(A, G,\alpha)$-multiplier} 
if the map $S_F$ on $(\pi\rtimes \lambda)(L^1(G, A))$, given by 
\[
	S_F \big( (\pi\rtimes \lambda)(f) \big) \defeq (\pi\rtimes \lambda)(F \cdot f),
\] 
is completely bounded. 
 \end{definition}

If $F$ is a Herz--Schur $(A, G,\alpha)$-multiplier, we continue to denote by $S_F$ the corresponding extension to a 
completely bounded map on $A\rtimes_{\alpha, r} G$.

Definition~\ref{de:HerzSchurmultofsystem} is independent of the faithful representation of $A$ \cite[Remark 3.2(ii)]{mtt}. 
We note that the set of all Herz--Schur $(A,G,\alpha)$-multipliers is an algebra with respect to the
pointwise operations; we denote it by $\mathfrak{S}(A,G,\alpha)$ and endow it with the norm $\|F\|_\HS \defeq \|S_F\|_\cb$.

The definition makes sense when $G$ is an arbitrary discrete group. In this case we can drop the separability assumption on $A$.  

In what follows we shall always consider $C^*$-dynamical systems $(A,G,\alpha)$ where either $G$ is second countable and $A$ is separable or $G$ is discrete in which case $A$ can be arbitrary. 

Given a function $F: G\to \cbm(A)$, define 
$\mathcal{N}(F): G\times G \to \cbm(A)$ by letting
\[
\mathcal{N}(F)(s,t)(a) \defeq \alpha_{t\inv} \big( F(ts^{-1})\big( \alpha_t(a) \big) \big), \quad s,t\in G,\ a\in A .
\]
Observe that if $F$ is pointwise measurable then so is $\mathcal{N}(F)$. 
The following result \cite[Theorem 3.5]{mtt} relates Schur $A$-multipliers and Herz--Schur $(A,G, \alpha)$-multipliers, generalising 
a classical transference result of Bo\.{z}ejko--Fendler \cite{BoFe84}.
  
\begin{theorem}\label{th:transferencemtt}
Let $(A,G, \alpha)$ be a $C^*$-dynamical system and $F: G\to \cbm(A)$ a bounded, pointwise-measurable function. 
The following are equivalent:
\begin{enumerate}[i.]
	\item $F$ is a Herz-Schur $(A,G,\alpha)$-multiplier;
	\item $\mathcal{N}(F)$ is a Schur $A$-multiplier.
\end{enumerate}	
Moreover, if the above conditions hold then $\|F\|_{\rm{HS}}=\|\mathcal{N}(F)\|_{\mathfrak{S}}$.
\end{theorem}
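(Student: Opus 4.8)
\textbf{Proof plan for Theorem~\ref{th:transferencemtt}.}

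The plan is to establish the equivalence by directly comparing the completely bounded maps $S_F$ and $S_{\mathcal{N}(F)}$ through a suitable intertwining identity, treating first the case of a single faithful representation $\theta$ and then invoking the representation-independence already recorded after Definition~\ref{de:SchurAmult} and Definition~\ref{de:HerzSchurmultofsystem}. The key observation is that the covariant representation $(\pi^\theta,\lambda^\theta)$ on $L^2(G,\HH_\theta)$ realises the reduced crossed product as operators on a Hilbert space of the form $L^2(X,\HH)$ with $X = G$ (equipped with left Haar measure) and $\HH = \HH_\theta$, so that every element of $A\rtimes_{\alpha,\theta}G$ sits inside $\Bd[L^2(G,\HH_\theta)]$ alongside $\Stwo(G,G;A)$-type operators. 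First I would write down, for $f\in C_c(G,A)$, the integral kernel of the operator $(\pi^\theta\rtimes\lambda^\theta)(f)$ acting on $L^2(G,\HH_\theta)$: a short computation gives
\[
	\big((\pi^\theta\rtimes\lambda^\theta)(f)\xi\big)(t) = \int_G \theta\big(\alpha_{t\inv}(f(ts\inv))\big)\Delta(s\inv)^{\text{?}}\,\xi(s)\,ds,
\]
so the kernel is $k_f(t,s) = \theta\big(\alpha_{t\inv}(f(ts\inv))\big)$ up to the modular factor coming from the change of variables $s\mapsto ts$ in the definition of convolution (I would fix the precise modular normalisation when carrying this out). The point is that $F\cdot f$ has kernel $k_{F\cdot f}(t,s) = \theta\big(\alpha_{t\inv}(F(ts\inv)(f(ts\inv)))\big)$, and a direct comparison with the definition of $\mathcal{N}(F)$ shows exactly that $k_{F\cdot f} = \mathcal{N}(F)\cdot k_f$ in the sense of the Schur-multiplier action (with the roles of the two copies of $G$ matched so that the variable pair $(s,t)$ feeds $\mathcal{N}(F)(s,t)$).

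With this kernel identity in hand, the argument splits into the two implications. For (ii) $\Rightarrow$ (i): if $\mathcal{N}(F)\in\Sch(G,G;A)$, then $S_{\mathcal{N}(F)}$ is completely bounded on $\Stwo(G,G;A)$; since $k_f$ ranges over a dense set of such kernels as $f$ ranges over $C_c(G,A)$, and since $S_{\mathcal{N}(F)}(T_{k_f}) = T_{k_{F\cdot f}} = (\pi^\theta\rtimes\lambda^\theta)(F\cdot f)$, the map $S_F$ agrees with the restriction of $S_{\mathcal{N}(F)}$ to the subspace $(\pi^\theta\rtimes\lambda^\theta)(C_c(G,A))$ of $\Bd[L^2(G,\HH_\theta)]$; hence $S_F$ is completely bounded with $\|F\|_\HS \le \|\mathcal{N}(F)\|_\Sch$. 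For (i) $\Rightarrow$ (ii): this is the direction where more care is needed, because one must go from complete boundedness of $S_F$ on the (relatively small) crossed-product algebra to complete boundedness of $S_{\mathcal{N}(F)}$ on all of $\Stwo(G,G;A)$. Here I would use the characterisation of Schur $A$-multipliers in Theorem~\ref{th:Schurmultsmtt}: it suffices to produce a non-degenerate representation $\rho$ and essentially bounded operator fields $\mathcal{V},\mathcal{W}$ on $G$ with $\mathcal{N}(F)(s,t)(a) = \mathcal{W}(t)^*\rho(a)\mathcal{V}(s)$. One extracts these from a Stinespring-type dilation of the completely bounded map $S_F$ on the crossed product, using the amplification/averaging trick of Bo\.{z}ejko--Fendler: tensor with $\Bd[L^2(G)]$, conjugate by the unitary $W$ on $L^2(G\times G)$ implementing $(W\zeta)(s,t) = \zeta(ts\inv,t)$ (or its appropriate modular-corrected version), and observe that $S_F$ becomes, after this conjugation, a $\mathcal{D}_G$-bimodule map whose symbol is precisely $\mathcal{N}(F)$; complete boundedness is preserved under these operations, yielding $\|\mathcal{N}(F)\|_\Sch \le \|F\|_\HS$.

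Combining the two inequalities gives $\|F\|_\HS = \|\mathcal{N}(F)\|_\Sch$, and the representation-independence statements ensure the conclusion does not depend on the choice of $\theta$. The main obstacle I anticipate is twofold: first, getting all the modular-function and Haar-measure normalisations consistent so that the kernel identity $k_{F\cdot f} = \mathcal{N}(F)\cdot k_f$ holds on the nose (this is bookkeeping, but error-prone); and second, in the (i) $\Rightarrow$ (ii) direction, justifying that complete boundedness transfers correctly — one cannot simply restrict, since $\Stwo(G,G;A)$ is not contained in the crossed product, so the dilation/averaging construction really is the crux, and one must verify that the operator fields it produces are genuinely weak$^*$-measurable and essentially bounded as required by Theorem~\ref{th:Schurmultsmtt}. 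In the discrete-group case all measurability issues evaporate and the computation with kernels becomes a matrix computation, so I would present that case first (or at least keep it in mind as the guiding picture) before addressing the measure-theoretic subtleties.
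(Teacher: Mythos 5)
Your kernel identity $k_{F\cdot f}=\mathcal{N}(F)\cdot k_f$ is the right heart of the matter, but your (ii)$\Rightarrow$(i) direction has a genuine gap: you assert that $S_F$ is ``the restriction of $S_{\mathcal{N}(F)}$'' to $(\pi^\theta\rtimes\lambda^\theta)(C_c(G,A))$, yet that subspace does not lie in the domain of $S_{\mathcal{N}(F)}$. By Definition~\ref{de:SchurAmult}, $S_{\mathcal{N}(F)}$ is defined on $\Stwo(G,G;A)$, whose closure is $\Cpt[L^2(G)]\otimes A$; but for non-compact (or infinite discrete) $G$ the kernel $k_f$ of $(\pi\rtimes\lambda)(f)$ is supported on the band $\{(t,s):ts\inv\in\supp f\}$, which has infinite measure, so $k_f\notin L^2(G\times G,A)$ and $(\pi\rtimes\lambda)(f)$ is not compact (take $f=\delta_e\otimes a$ in the discrete case). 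Thus the domains of $S_F$ and $S_{\mathcal{N}(F)}$ are non-nested in \emph{both} directions --- you correctly flag this for (i)$\Rightarrow$(ii) but overlook the mirror problem here --- and the kernel identity alone transfers complete boundedness in neither. The repair is either to invoke Theorem~\ref{th:Schurmultsmtt} in this direction as well, writing $\mathcal{N}(F)(s,t)(a)=\mathcal{W}(t)^*\rho(a)\mathcal{V}(s)$ and checking that $T\mapsto W^*\tilde{\rho}(T)V$ restricted to the crossed product equals $S_F$ (this is what ``restriction'' means in the proof of Lemma~\ref{le:Ncompleteisomhom}), or to compress: for compact $K\subseteq G$ one has $P_K(\pi\rtimes\lambda)(f)P_K=T_{k_f\chi_{K\times K}}\in\Stwo(G,G;A)$ and $S_{\mathcal{N}(F)}\big(T_{k_f\chi_{K\times K}}\big)=P_K(\pi\rtimes\lambda)(F\cdot f)P_K$, whence $\|(\pi\rtimes\lambda)(F\cdot f)\|\leq\|\mathcal{N}(F)\|_\Sch\|(\pi\rtimes\lambda)(f)\|$ on letting $K$ increase, and likewise at all matrix levels. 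Either way an extra step is required that your proposal omits.

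For (i)$\Rightarrow$(ii) your Bo\.{z}ejko--Fendler averaging/conjugation plan is a legitimate alternative, but it differs from the route the paper actually takes (quoted from \cite[Theorems 3.5, 3.8]{mtt} and reproduced in the proof of Lemma~\ref{le:Ncompleteisomhom}): there one writes $S_F=W^*\rho(\cdot)V$ by Haagerup--Paulsen--Wittstock, decomposes $\rho\circ(\pi\rtimes\lambda)$ as a covariant pair $(\rho_A,\rho_G)$, sets $\mathcal{V}(s)=\rho_G(s\inv)V\lambda_s$ and $\mathcal{W}(t)=\rho_G(t\inv)W\lambda_t$, and verifies $\mathcal{N}(F)(s,t)(a)=\mathcal{W}(t)^*\rho_A(a)\mathcal{V}(s)$, which is exactly condition (ii) of Theorem~\ref{th:Schurmultsmtt} and yields $\|\mathcal{N}(F)\|_\Sch\leq\|F\|_\HS$ directly. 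Your version defers the whole difficulty to the asserted step ``$S_F$ becomes a $\mathcal{D}_G$-bimodule map whose symbol is precisely $\mathcal{N}(F)$'', and in the operator-valued setting converting a cb bimodule map back into a Schur $A$-multiplier in the sense of Definition~\ref{de:SchurAmult} again passes through Theorem~\ref{th:Schurmultsmtt}; the paper's route reaches that theorem in one step, so you would gain nothing from the detour. Also note that in the discrete case the measurability issues do evaporate, but the domain mismatch above does not.
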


The Schur $A$-multipliers $\nph$ of the form $\nph = \mathcal{N}(F)$ will be called \emph{$\alpha$-invariant}. 
We note that a different definition was given in \cite{mtt} (see \cite[Definition 3.14]{mtt}), but by 
\cite[Theorem 3.18]{mtt}, it agrees with the one adopted here.

In the case where $A = \mathbb{C}$ and the action is trivial, Herz--Schur $(A,G,\alpha)$-multipliers coincide with 
the classical \emph{Herz--Schur multipliers} of $G$ \cite{CaHa85}, that is, with the functions
$u : G\to \mathbb{C}$ such that $u A(G)\subseteq A(G)$ and the map 
\[
	m_u: A(G) \to A(G) ; \ m_u(v) \defeq uv , \quad v \in A(G) ,
\]
is completely bounded. 
Here we equip $A(G)$ with the operator space structure, arising from the identification $A(G)^* = \vN(G)$ \cite{eymard}.
The space of classical Herz--Schur multipliers of $G$ will be denoted by $\Mcb A(G)$.  
We note that if $u\in \Mcb A(G)$ then
the restriction $S_u := m_u^*|_{C^*_r(G)}$ is a completely bounded map satisfying \cite{CaHa85}
\[
	S_u : C^*_r(G) \to C^*_r(G) ; \ S_u \big( \lambda(f) \big) = \lambda( uf) , \quad f \in L^1(G) .
\]

\subsection{Preliminary results}
\label{ssec:prelimlemmas}

In this subsection, 
we give several technical results on Schur and Herz--Schur multipliers that will be needed in the sequel. 
The equivalence between (i) and (iii) in the next proposition was given, in the scalar-valued case, in 
\cite[Theorem 7]{KP05}.

\begin{proposition}\label{pr:zeroalmosteverywhere}
Let $\HH$ be a separable Hilbert space, $A \subseteq \cl B(\HH)$ a separable $C^*$-algebra and $\varphi : X \times Y \to \cbm(A)$ a bounded, pointwise-measurable function.
The following are equivalent:
\begin{enumerate}[i.]
	\item $\varphi(x,y) = 0$ for almost all $(x,y) \in X \times Y$;
	\item $S_{\varphi} = 0$.
\end{enumerate}
If $\varphi$ is a Schur $A$-multiplier of the form 
$\varphi(x,y)(a) = \mathcal{W}(y)^* \rho(a) \mathcal{V}(x)$, $a\in A$, 
as in Theorem \ref{th:Schurmultsmtt}, 
then these conditions are equivalent to:
\begin{enumerate}[i.] \setcounter{enumi}{2}
	\item $\varphi(x,y) = 0$ for marginally almost all $(x,y) \in X \times Y$.
\end{enumerate}

\end{proposition}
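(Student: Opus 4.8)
The plan is to prove the chain of implications (ii) $\Rightarrow$ (i) $\Rightarrow$ (iii) for a general bounded pointwise-measurable $\varphi$ when those statements make sense, and then close the loop (iii) $\Rightarrow$ (i) using the Stinespring-type representation. First, (i) $\Rightarrow$ (ii) is immediate: if $\varphi(x,y)=0$ for a.e.\ $(x,y)$, then $\varphi\cdot k=0$ in $L^2(Y\times X,A)$ for every $k$, so $T_{\varphi\cdot k}=0$ and $S_\varphi=0$. For (ii) $\Rightarrow$ (i), the natural approach is to test $S_\varphi$ against rank-one kernels. Fix a countable dense set $(a_n)$ in $A$ and a countable dense set $(\xi_m)$ in $L^2(X,\HH)$; for measurable $E\subseteq X$, $F\subseteq Y$ of finite measure and a fixed unit vector $\eta\in\HH$, consider the kernel $k(y,x)=\chi_F(y)\chi_E(x)\,a_n$ acting suitably (or more precisely $k(y,x)=\chi_{F\times E}(y,x)(a_n^* a_n)^{1/2}$ so that $T_k$ has the right form), compute $T_{\varphi\cdot k}$ explicitly, and deduce that $\int_F\int_E \varphi(x,y)(a_n)\,dx\,dy=0$ for all such $E,F$. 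Since the measure spaces are standard (hence have countable generating semirings of finite-measure sets), a Lebesgue differentiation / monotone-class argument gives $\varphi(x,y)(a_n)=0$ for a.e.\ $(x,y)$; intersecting the countably many null sets over $n$ and using norm-density of $(a_n)$ together with the uniform bound $\sup\|\varphi(x,y)\|\le\|S_\varphi\|$ (or the a priori boundedness hypothesis) yields $\varphi(x,y)=0$ for a.e.\ $(x,y)$. The main subtlety here is measure-theoretic bookkeeping: making sure the "almost all" sets can be chosen independently of $n$ and that weak measurability is enough to run the differentiation argument coordinatewise.

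For the second half, assume $\varphi$ is a Schur $A$-multiplier with $\varphi(x,y)(a)=\mathcal W(y)^*\rho(a)\mathcal V(x)$ as in Theorem~\ref{th:Schurmultsmtt}. The implication (iii) $\Rightarrow$ (i) is trivial since marginally null sets are null. For (i) $\Rightarrow$ (iii), the idea is that the special product form forces the vanishing to be "rectangular". Since $\rho$ is non-degenerate, pick $a\in A$ and a vector so that $\rho(a)$ is (close to) a rank-one or invertible-on-a-subspace operator; more cleanly, using that $A$ acts non-degenerately one shows $\varphi(x,y)=0$ iff $\mathcal W(y)^*\rho(a)\mathcal V(x)=0$ for all $a$, and since $\lspan\{\rho(a)\zeta:a\in A,\zeta\in\HH_\rho\}$ is dense, this is equivalent to: for a.e.\ $(x,y)$, the range of $\mathcal V(x)$ is orthogonal to the range of $\mathcal W(y)$ under the module action. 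Now set $N_1=\{x:\mathcal V(x)\ne 0\}$ and $N_2=\{y:\mathcal W(y)\ne 0\}$ (as equivalence classes, measurable since $x\mapsto\|\mathcal V(x)\|$ is measurable). By Fubini, the set where $\varphi\ne 0$ is contained in $N_1\times N_2$; if $N_1$ and $N_2$ both had positive measure, one could find a positive-measure rectangle $E\times F\subseteq N_1\times N_2$ on which $\varphi$ vanishes a.e., which — after another application of the differentiation argument on that rectangle, now exploiting that $\mathcal V,\mathcal W$ are genuine (not just a.e.-defined) $L^\infty$ functions — forces $\mathcal W(y)^*\rho(a)\mathcal V(x)=0$ pointwise on a marginally co-null subset of $E\times F$, contradicting $\mathcal V(x)\ne 0$, $\mathcal W(y)\ne 0$ there together with non-degeneracy. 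Hence one of $N_1,N_2$ is null, say $N_1$, and then $\{\varphi\ne 0\}\subseteq N_1\times Y$ is marginally null.

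The step I expect to be the main obstacle is the passage from "a.e.\ vanishing" to "marginally a.e.\ vanishing" in the representation form — i.e.\ the argument that the vanishing locus is contained in a rectangle with one null side. The delicate point is that $\mathcal V$ and $\mathcal W$ are only determined up to null sets in $X$ and $Y$ respectively, and $\rho$ may have a large kernel, so "$\mathcal W(y)^*\rho(a)\mathcal V(x)=0$ for all $a$" does not obviously say "$\mathcal V(x)=0$ or $\mathcal W(y)=0$" at a single point; one really needs to use non-degeneracy of $\rho$ and a careful selection/measurable-choice argument (or a separable predual / countable-dense-subset reduction) to localise the orthogonality and then run Fubini. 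A clean way around it may be to first reduce, by compressing $\HH_\rho$, to the cyclic subspace generated by the ranges of the $\mathcal V(x)$ and $\mathcal W(y)$, on which $\rho$ acts non-degenerately and the map $a\mapsto\rho(a)$ separates the relevant vectors; I would organise the proof so that this reduction is done once at the start.
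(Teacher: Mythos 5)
Your treatment of (i)$\Leftrightarrow$(ii) is correct and essentially the paper's argument: the paper also tests $S_\varphi$ against kernels of the form $k(y,x)=w(y,x)a$ with $w\in L^2(Y\times X)$ and $a$ ranging over a countable dense subset of $A$, and reads off $\langle\varphi(x,y)(a)e_i,e_j\rangle=0$ almost everywhere from the vanishing of the resulting integrals; your rank-one/differentiation variant is a harmless reformulation of the same idea.

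The implication (i)$\Rightarrow$(iii) is where your proposal breaks down, and although you correctly flag the dangerous spot, your proposed repair does not work. The contradiction you invoke --- that $\mathcal W(y)^*\rho(a)\mathcal V(x)=0$ for all $a$, together with $\mathcal V(x)\neq 0$ and $\mathcal W(y)\neq 0$, violates non-degeneracy --- is false. Take $\rho=\rho_1\oplus\rho_2$ acting non-degenerately on $\HH_1\oplus\HH_2$, let $\mathcal V(x)$ map into $\HH_1$ and $\mathcal W(y)$ into $\HH_2$, both nowhere zero; then $\varphi\equiv 0$ while $N_1=X$ and $N_2=Y$ have full measure, so your argument would manufacture a contradiction where none exists. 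Compressing to a cyclic subspace does not remove this phenomenon, because the obstruction is that the condition $\langle\rho(a)\mathcal V(x)\xi,\mathcal W(y)\eta\rangle=0$ for all $a,\xi,\eta$ is a genuinely joint condition on the pair $(x,y)$ and need not factor as ``$x$ is bad or $y$ is bad''. Moreover, the statement you are aiming for --- that one of $N_1,N_2$ is null, so that $R\subseteq N_1\times Y$ --- is strictly stronger than (iii) and fails in general: a marginally null set such as $(M\times Y)\cup(X\times N)$, with $M,N$ null and nonempty, projects onto all of $X$ and all of $Y$. The missing idea is the one the paper uses: each coordinate function $(x,y)\mapsto\langle\rho(a)\mathcal V(x)\xi,\mathcal W(y)\eta\rangle$ is $\omega$-continuous, so $R=\{\varphi\neq 0\}$ is a countable union of non-vanishing sets of such functions and hence $\omega$-open, i.e.\ marginally equivalent to a countable union of measurable rectangles $\cup_n A_n\times B_n$. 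Nullity of $R$ then forces $\mu(A_n)\nu(B_n)=0$ for every $n$, so each rectangle has a null side, and collecting the null sides exhibits $R$, up to a marginally null set, inside $(N_1\times Y)\cup(X\times N_2)$ with $N_1,N_2$ null. It is this rectangular structure of $R$ coming from $\omega$-openness, not any pointwise separation of $\mathcal V$ and $\mathcal W$, that converts ``null'' into ``marginally null''.
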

\begin{proof}
(i)$\implies$(ii) Let $T_k \in \Stwo(X,Y ; A)$.
If $\varphi(x,y) = 0$ for almost all $(x,y) \in X \times Y$ then 
$\varphi\cdot k = 0$ almost everywhere, for every $k\in L^2(Y\times X, A)$, and hence
$S_\varphi(T_k) = T_{\varphi\cdot k} = 0$ for every $k\in L^2(Y\times X, A)$. 

(ii)$\implies$(i) Suppose $S_\varphi= 0$ and let 
$k\in L^2(Y\times X, A)$. 
We have $S_\varphi(T_k) = T_{\varphi \cdot k} =0$, so we conclude that $\varphi \cdot k = 0$ 
almost everywhere by \cite[Lemma 2.1]{mtt}. 
We claim that $\varphi(x,y) =0$ for almost all $(x,y) \in X \times Y$.
Indeed, let $\{ e_i \}_{i \in \mathbb{N}}$ be a dense subset of $\HH$, $\xi\in L^2(X)$ and $\eta\in L^2(Y)$; then 
\begin{eqnarray}\label{eq_3l}
	& & \ip{S_\varphi(T_k) (\xi \otimes e_i)}{\eta \otimes e_j} 
	=  
	\int_Y \ip{S_\varphi(T_k) (\xi \otimes e_i)(y)}{(\eta \otimes e_j)(y)} dy \\
	& = &
	\int_Y \ip{\int_X (\varphi \cdot k)(y,x)(\xi \otimes e_i)(x) dx}{(\eta \otimes e_j)(y)} dy \nonumber\\
        & = & \int_{Y}\int_{X} \ip{\varphi(x,y) \big( k(y,x) \big)e_i}{e_j} \xi(x)\overline{\eta(y)} dx\, dy.\nonumber
\end{eqnarray}
Fix $a \in A$, choose $w \in L^2(Y \times X)$, and let $k(y,x)= w(y,x)a$.
Then (\ref{eq_3l}) implies
$$\int_Y\int_X \ip{\varphi(x,y)(a)e_i}{e_j} w(y,x) \xi(x) \overline{\eta(y)} dx\, dy = 0.$$
Since $\varphi(x,y)(a)$ is a bounded operator, 
we conclude that $\langle \varphi(x,y)(a) e_i , e_j \rangle = 0$ almost everywhere for all $i,j \in \mathbb{N}$.
Hence $\varphi(x,y) = 0$ almost everywhere by the separability of $A$ and the continuity of $\varphi(x,y)$.

Now suppose that $\varphi$ is a Schur $A$-multiplier.

(iii)$\implies$(i) is trivial. 

(i)$\implies$(iii) Assume that the set
$$R \defeq \{ (x,y) \in X \times Y : \varphi(x,y) \neq 0 \}$$
is null. 
Let $A_0$ and $\HH_0$ be countable dense subsets of $A$ and $\HH$ respectively; then
\[
\begin{split}
    R^c = \{ (x,y) : \varphi(x,y) = 0 \} &= \bigcap_{a \in A_0 , \ \xi,\eta \in \HH_0} \{ (x,y) : \ip{\varphi(x,y)(a) \xi}{\eta} = 0 \} \\
        &= \bigcap_{a \in A_0 , \ \xi,\eta \in \HH_0} \{ (x,y) : \ip{\rho(a) \mathcal{V}(x) \xi}{\mathcal{W}(y) \eta} = 0 \} .
\end{split}
\]
It is easily seen that a function of the form 
$(x,y) \mapsto \ip{\alpha(x)}{\beta(y)}$, where $\alpha \in L^\infty(X , \HH_\rho)$ and $\beta \in L^\infty(Y , \HH_\rho)$, 
is $\omega$-continuous; thus, the set $\{ (x,y) : \ip{ \alpha(x)}{ \beta(y)} \neq 0 \}$ is $\omega$-open.
It follows that the set
\[
    \bigcup_{a \in A_0 , \ \xi,\eta \in \HH_0} \{ (x,y) : \ip{\rho(a) \mathcal{V}(x) \xi}{\mathcal{W}(y) \eta} \neq 0 \}
\]
is $\omega$-open.
Hence there are families $A_n \subseteq X, \ B_n \subseteq Y$ of measurable sets such that $R$ is marginally equivalent to $\cup_{n =1}^\infty A_n \times B_n$.
Since $(\mu \times \nu)(R) = 0$ we have $\mu(A_n) \nu(B_n) = 0$ for each $n$.
Let
\[
    N_1 \defeq \bigcup_{\nu(B_n) \neq 0} A_n \quad \text{ and } \quad N_2 \defeq \bigcup_{\mu(A_n) \neq 0} B_n.
\]
We have that $\mu(N_1) = 0$, $\nu(N_2) = 0$ and $R$ that 
is marginally equivalent to a subset of $N_1 \times Y \cup X \times N_2$; thus, $R$ is marginally null.
\end{proof}



The next lemma contains a completely isometric version of the main transference result of \cite[Section 3]{mtt}.

\begin{lemma}\label{le:Ncompleteisomhom}
Let $(A,G,\alpha)$ be a $C^*$-dynamical system.
The map $\cl N$ is a completely isometric algebra homomorphism from the space of Herz--Schur $(A,G,\alpha)$-multipliers to the space of Schur $A$-multipliers on $G \times G$.
\end{lemma}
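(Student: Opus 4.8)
The plan is to build on Theorem~\ref{th:transferencemtt}, which already gives the bijectivity and the isometric norm identity $\|F\|_\HS = \|\cl N(F)\|_\Sch$; the only genuinely new content here is (a) that $\cl N$ is a homomorphism for the pointwise product, (b) that $\cl N$ lands in the Schur $A$-multipliers, and (c) that it is \emph{completely} isometric, not merely isometric. For (b), if $F$ is a Herz--Schur $(A,G,\alpha)$-multiplier then $\cl N(F)$ is a Schur $A$-multiplier by Theorem~\ref{th:transferencemtt}; conversely any $\alpha$-invariant Schur $A$-multiplier is of the form $\cl N(F)$ by definition, so the range is exactly the $\alpha$-invariant Schur $A$-multipliers sitting inside $\Sch(G,G;A)$. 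For (a), I would simply compute: given $F_1, F_2 : G \to \cbm(A)$ with pointwise product $(F_1 F_2)(s) = F_1(s) \circ F_2(s)$ (or the pointwise product in $\cbm(A)$ used to make $\Sch(A,G,\alpha)$ an algebra — this should be matched with whatever product makes $\Sch(X,Y;A)$ an algebra), one checks directly from the formula
\[
\cl N(F)(s,t)(a) = \alpha_{t^{-1}}\big( F(ts^{-1})\big(\alpha_t(a)\big)\big)
\]
that $\cl N(F_1 F_2)(s,t)(a) = \alpha_{t^{-1}}\big(F_1(ts^{-1})\big(\alpha_t(\alpha_{t^{-1}}(F_2(ts^{-1})(\alpha_t(a))))\big)\big) = \cl N(F_1)(s,t)\big(\cl N(F_2)(s,t)(a)\big)$, which is precisely the pointwise product of $\cl N(F_1)$ and $\cl N(F_2)$ as functions $G\times G \to \cbm(A)$. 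So $\cl N$ is multiplicative, and linearity is obvious.

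For the complete isometry, the point is that $\cl N$ should be seen as a linear map between operator spaces $\Sch(A,G,\alpha) \to \Sch(G,G;A)$, both of which carry operator space structures coming from $\|\cdot\|_\cb$ of the associated multiplication maps $S_F$ and $S_\varphi$. The cleanest route is to amplify: for $n \in \NN$ one has natural identifications of $M_n(\Sch(A,G,\alpha))$ with Herz--Schur multipliers valued in $\cbm(A) \otimes M_n \cong \cbm(A, M_n(A))$-type objects, or equivalently with Herz--Schur $(M_n(A), G, \alpha\otimes\id)$-multipliers, and similarly $M_n(\Sch(G,G;A))$ with Schur $M_n(A)$-multipliers; under these identifications the operator $\cl N$ applied at level $n$ is just the $\cl N$-construction for the dynamical system $(M_n(A), G, \alpha \otimes \id_n)$. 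Since Theorem~\ref{th:transferencemtt} gives the isometric equality $\|F\|_\HS = \|\cl N(F)\|_\Sch$ for \emph{every} $C^*$-dynamical system, applying it to $(M_n(A),G,\alpha\otimes\id_n)$ yields $\|\,[F_{ij}]\,\|_{M_n(\Sch(A,G,\alpha))} = \|\,[\cl N(F_{ij})]\,\|_{M_n(\Sch(G,G;A))}$ for all $n$, which is exactly complete isometry.

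The main obstacle I anticipate is pinning down the matrix-level identifications compatibly: one must verify that the operator space structure on $M_n(\Sch(A,G,\alpha))$ (defined via $\|S_{[F_{ij}]}\|_\cb$ on $M_n(A\rtimes_{\alpha,r}G)$) really coincides with the Herz--Schur multiplier norm of the associated single map for the system $(M_n(A),G,\alpha\otimes\id_n)$ — this hinges on the canonical $*$-isomorphism $M_n(A\rtimes_{\alpha,r}G) \cong M_n(A)\rtimes_{\alpha\otimes\id_n,r}G$ and on checking that the $S_F$-amplification intertwines the obvious covariant representations. The analogous check on the Schur side uses $M_n(\Stwo(X,Y;A)) \cong \Stwo(X,Y;M_n(A))$ and the fact that $S_\varphi$ amplified is $S_{\tilde\varphi}$ for the corresponding $M_n(A)$-valued symbol. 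Once these bookkeeping identifications are in place, the statement follows formally by feeding $(M_n(A),G,\alpha\otimes\id_n)$ into the already-established scalar-level results. Alternatively, and perhaps more economically, one can invoke the characterisations in Theorem~\ref{th:Schurmultsmtt} and its Herz--Schur counterpart used in the proof of Theorem~\ref{th:transferencemtt} to transport a representation $\varphi(x,y)(a) = \cl W(y)^*\rho(a)\cl V(x)$ back and forth, since those representations are constructed in a way that behaves well under matrix amplification; but the amplification-of-the-system argument is shorter to write.
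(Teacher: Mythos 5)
Your homomorphism computation and your description of the range as the $\alpha$-invariant Schur $A$-multipliers are correct, and match what the paper treats as routine; the real content of the lemma is the complete isometry, and there your argument has a gap. The identification you lean on --- ``$M_n$ of the Herz--Schur multipliers of $(A,G,\alpha)$, or equivalently the Herz--Schur multipliers of $(M_n(A),G,\alpha\otimes\id_n)$'' --- is not an equivalence. The matrix norm on $M_n(\Sch(A,G,\alpha))$ is $\|(S_{F_{i,j}})_{i,j}\|_\cb$ for the map $A\rtimes_{\alpha,r}G\to M_n(A\rtimes_{\alpha,r}G)$, so a matrix of multipliers is a function $G\to M_n(\cbm(A))=\cbm(A,M_n(A))$, i.e.\ its values are maps \emph{from $A$ into $M_n(A)$}; a Herz--Schur multiplier of the amplified system takes values in $\cbm(M_n(A))$, i.e.\ maps \emph{from $M_n(A)$ to $M_n(A)$}. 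These are different spaces, the amplified system has many more multipliers, and you never specify which single $\hat F:G\to\cbm(M_n(A))$ is to be associated with $(F_{i,j})$; the obvious Schur-product or matrix-product candidates do not have the right completely bounded norm. You explicitly flag this matrix-level bookkeeping as ``the main obstacle'' and then defer it, but it is exactly the step carrying the content. (It is repairable: set $\hat F(r)(b):=\big(F_{i,j}(r)(b_{1,1})\big)_{i,j}$ and use the complete isometry $a\mapsto e_{1,1}\otimes a$ together with the complete contraction $b\mapsto b_{1,1}$ to get $\|S_{\hat F}\|_\cb=\|(S_{F_{i,j}})_{i,j}\|_\cb$, and similarly on the Schur side, before invoking Theorem~\ref{th:transferencemtt} for $(M_n(A),G,\alpha\otimes\id_n)$ --- but none of this is in your write-up.)

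The paper sidesteps the issue entirely: it applies the completely bounded representation theorem directly to the single map $(S_{F_{i,j}})_{i,j}\in\cbm(A\rtimes_{\alpha,r}G,\,M_n(A\rtimes_{\alpha,r}G))$, writes it as $W^*\rho(\cdot)V$ with $\|V\|\,\|W\|=\|(S_{F_{i,j}})_{i,j}\|_\cb$, and reruns the transference computation of \cite[Theorem 3.8]{mtt} to produce $\cl V,\cl W$ with $\big(\cl N(F_{i,j})(s,t)(a)\big)_{i,j}=\cl W(t)^*\rho_A(a)\cl V(s)$ almost everywhere, the reverse inequality being restriction to $M_n(A\rtimes_{\alpha,r}G)$. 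This is essentially the alternative you mention in your last sentence (``transport a representation back and forth''), and it is the route to take: it needs no identification of matrix levels with multipliers of an amplified system, only the observation that the scalar transference proof works verbatim for maps into $M_n(A\rtimes_{\alpha,r}G)$.
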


\begin{proof}
Fix $n \in \mathbb{N}$ and Herz--Schur $(A,G,\alpha)$-multipliers $F_{i,j}$, $1 \leq i,j \leq n$.
Since $(S_{F_{i,j}})_{i,j}$ is an element of $\cbm (A \rtimes_{\alpha ,r} G , M_n(A \rtimes_{\alpha ,r} G))$ 
there exist a representation $\rho: A \rtimes_{\alpha ,r} G \to \cl B(\cl H_{\rho})$ 
and operators $V, W : L^2(G,\cl H)\to \cl H_{\rho}$ 
such that $(S_{F_{i,j}}) _{i,j} = W^* \rho( \cdot ) V$ and $\| V \| \| W \| = \| (S_{F_{i,j}})_{i,j} \|_\cb$.
Take $a \in A$ and $r \in G$. Arguing as in the proof of \cite[Theorem 3.8]{mtt} we obtain representations $\rho_A$ and $\rho_G$, of $A$ and $G$ respectively, such that
\[
    \big( \pi(F_{i,j}(t)(a)) \lambda_r \big)_{i,j} = \big( S_{F_{i,j}}(\pi(a) \lambda_r) \big)_{i,j} = W^* \rho_A(a) \rho_G(r) V .
\]
Define
\[
    \mathcal{V}(s) := \rho_G(s\inv) V \lambda_s \quad \text{ and } \quad \mathcal{W}(t) := \rho_G(t\inv) W \lambda_t ,
\]
so that $\sup_{s \in G} \| \mathcal{V}(s) \| \sup_{t \in G} \| \mathcal{W}(t) \| = \| V \| \| W \| = \|(S_{F_{i,j}})_{i,j}\|_\cb$.
Calculations as in the proof of \cite[Theorem 3.8]{mtt} show that
\[
    (\mathcal{N}(F_{i,j})(s,t)(a))_{i,j} = \mathcal{W}(t)^* \rho_A(a) \mathcal{V}(s) ,
\]
almost everywhere, so
\[
    \| (S_{\mathcal{N}(F_{i,j})}) _{i,j}\|_\cb \leq \sup_{s \in G} \| \mathcal{V}(s) \| \sup_{t \in G} \| \mathcal{W}(t) \| = \| V \| \| W \|  = \| (S_{F_{i,j}}) _{i,j}\|_\cb .
\]
In the converse direction, 
note that 
$(S_{F_{i,j}})_{i,j}$ is the restriction of $(S_{\mathcal{N}(F_{i,j})})_{i,j}$ to $M_n(A \rtimes_{\alpha,r} G)$, so $\| (S_{F_{i,j}})_{i,j} \|_\cb \leq \| (S_{\mathcal{N}(F_{i,j})})_{i,j} \|_\cb$.
Thus $F \mapsto \mathcal{N}(F)$ is a complete isometry.
The homomorphism claim is trivial.
\end{proof}

\section{Central multipliers}\label{sec:centmults}

Let $(X,\mu)$ and $(Y,\nu)$ be standard measure spaces. 
We  denote for brevity by $\cl B$ (resp.\ $\cl K$) the space $\Bd[L^2(X,\mu),L^2(Y,\nu)]$ (resp.\ $\Cpt[L^2(X,\mu),L^2(Y,\nu)]$).
Throughout this section $A$ denotes a separable $C^*$-algebra, acting non-degenerately on a separable Hilbert space $\HH$.
The multiplier algebra of $A$ will be written $\Mult(A)$ and identified with the idealiser of $A$ in $\Bd$:
\[
	\Mult(A) = \{ c \in \Bd : ca ,ac \in A  \text{ for all $a \in A$} \} .
\]
As usual, we denote by $Z(B)$ the centre of the $C^*$-algebra $B$.

The following is immediate, and will be used several times in the sequel. 

\begin{remark}\label{r_suba}
{\rm
Let $B\subseteq A$ be a $C^*$-subalgebra, and $\varphi : X \times Y\to \cbm(A)$ be a Schur $A$-multiplier.
Suppose that $\varphi(x,y)$ leaves $B$ invariant for almost all $(x,y)$, and let $\varphi_B : X \times Y\to \cbm(B)$ be the map given by $\varphi_B(x,y)(b) := \varphi(x,y)(b)$ ($b\in B$, $(x,y)\in X\times Y)$.
Then  $\varphi_B$ is a Schur $B$-multiplier and $\|\varphi_B\|_\Sch \leq \| \varphi \|_\Sch$.
}
\end{remark}

\subsection{Central Schur multipliers}
\label{ssec:centralSchur}

\begin{definition}\label{d_ce}
A Schur $A$-multiplier $\nph\in \mathfrak{S}(X,Y;A)$ will be called \emph{central} if there exists a family $(a_{x,y})_{(x,y)\in X\times Y} \subseteq Z(\Mult(A))$ such that 
\begin{equation}\label{eq_axy}
    \nph(x,y)(a) = a_{x,y}a, \quad a\in A.
\end{equation}
\end{definition}

\begin{remark}\label{r_ce}
{\rm
Let $\nph\in \mathfrak{S}(X,Y;A)$  be a central Schur $A$-multiplier.
\begin{enumerate}[i.]
    \item The family $(a_{x,y})_{(x,y)\in X\times  Y}$ associated to $\nph$ in Definition \ref{d_ce} is unique up to a set of zero product measure. 
    \item If $(a_{x,y})_{(x,y)\in X\times  Y}$ is associated to $\nph$ as in Definition \ref{d_ce} then the map $X\times Y \to Z(\Mult(A))$, $(x,y) \mapsto a_{x,y}$, is weakly measurable. 
\end{enumerate}
}
\end{remark}

Let $A$ be a commutative $C^*$-algebra, and assume that $A = C_0(Z)$, 
where $Z$ is a locally compact Hausdorff space. 
The standing separability assumption implies 
that $Z$ is second-countable, and hence metrisable. 
Since $C_0(Z)$ is separable it has a faithful state, so the associated Radon measure $m$ on $Z$ has full support. 

Let $C_0(Z,B)$ be the space of all continuous functions from $Z$ into a normed space $B$  
vanishing at infinity. 
We write $\cl K = \cl K(L^2(X), L^2(Y))$ and note that, 
up to a canonical $*$-isomorphism, 
\begin{equation}\label{eq_KC}
\cl K \otimes C_0(Z) = C_0(Z,\cl K). 
\end{equation}
The algebraic tensor product $L^2(Y\times X)\odot C_0(Z)$ can thus be viewed as 
a (dense) subspace of any of the spaces $\cl K \otimes C_0(Z)$ and $C_0(Z,\cl K)$. 

Let $\nph\in \mathfrak{S} (X,Y;$ $C_0(Z))$ be a central Schur $C_0(Z)$-multiplier, 
associated with a family $(a_{x,y})_{(x,y)\in X\times  Y} \subseteq C_b(Z)$ as in Definition~\ref{d_ce}; 
we view $\varphi$ as a scalar-valued function 
on $X\times Y \times Z$ by letting
\[
\nph(x,y,z) =  a_{x,y}(z), \quad x \in X,\ y\in  Y,\ z\in Z.
\]
By definition, $\varphi$ is a bounded, measurable function on $X \times Y \times Z$ which is continuous in the $Z$-variable. 
On the other hand, suppose $\nph : X\times Y \times Z\to \CC$ is a bounded measurable function, continuous in the $Z$-variable. 
Then $(x,y)\mapsto \nph(x,y,\cdot)a(\cdot)\in C_0(Z)$ is weakly measurable for each $a\in C_0(Z)$. 
Indeed, the function $(x,y) \mapsto \delta_z(\nph(x,y)(a))=\nph(x,y,z)a(z)$ is measurable for each $z\in Z$
(here $\delta_z$ stands for the point mass measure at $z\in Z$).
As any $m\in M(Z) = C_0(Z)^*$ is the weak* limit of linear combinations of 
point mass measures, we conclude that the function 
$(x,y)\mapsto m(\nph(x,y)(a))$ is measurable for all $m\in M(Z)$.
We thus identify the central Schur $C_0(Z)$-multipliers with
bounded measurable functions $\nph : X \times Y \times Z \to \CC$, continuous in the $Z$-variable.
For each $z\in  Z$, let $\nph_z : X\times  Y\to \CC$  be  given by $\nph_z(x,y) = \nph(x,y,z)$; 
clearly, $\nph_z$ is a measurable function for  each $z\in Z$.

We recall some terminology from \cite{CleMS} that will be used in the sequel. 
Let $\varphi \in L^\infty(X \times Y \times Z)$ and associate with it a bounded bilinear map
\[
	\Lambda_\varphi : \Stwo(Y , Z) \times \Stwo(X,Y) \to \Stwo(X,Z) ; \ \Lambda_\varphi(T_h , T_k) := T_{\varphi (h * k)} , 
\]
where $k \in L^2(Y \times X),\ h \in L^2(Z \times Y)$ and 
\[
	\varphi (h * k) (z,x) := \int_Y \varphi(x,y,z) h(z,y) k(y,x) dy , \quad (x,z) \in X \times Z .
\]
By \cite[Corollary 10]{CleMS} 
the norm $\| \Lambda_\varphi \|$ of $\Lambda_\varphi$ as a bilinear map, where 
the spaces $\Stwo(Y , Z)$ and $\Stwo(X,Y)$ are equipped with their Hilbert-Schmidt norm,
is equal to $\| \varphi \|_\infty$. 
We say that $\varphi$ is an \emph{operator $\Sone$-multiplier} if 
$\Lambda_\varphi$ maps $\Stwo(Y,Z) \times \Stwo(X,Y)$ into $\Sone(X,Z)$. 
The following characterisation of operator $\Sone$-multipliers was obtained in \cite{CleMS}:

\begin{theorem}\label{th_dia0}
Let $\nph : X\times Y \times Z\to \CC$ be a bounded measurable function.  
The following are  equivalent:
\begin{enumerate}[i.]
    \item the function $\nph$ is an operator $\mathcal{S}_1$-multiplier;
    
\item there exist a Hilbert space $\HHH$ and weakly measurable functions $v : X\times Z\to \HHH$, $w : Y\times Z\to \HHH$, satisfying 
    \[
    	\esssup_{(x,z) \in X \times Z} \|v(x,z)\|  < \infty, \ \esssup_{(y,z) \in Y \times Z} \|w(y,z)\|  < \infty ,
    \]
    such that
\begin{equation}\label{eq_vw}
        \nph(x,y,z) = \ip{v(x,z)}{w(y,z)}, \quad \text{ almost all $(x,y,z) \in X \times Y \times Z$.} 
\end{equation}
\end{enumerate}
Moreover, if these conditions hold then 
$$\| \varphi \|_\Sch = 
\esssup_{(x,z) \in X \times Z} \| v(x,z) \| \esssup_{(y,z) \in Y \times Z} \| w(y,z) \|.$$
\end{theorem}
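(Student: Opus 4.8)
The plan is to reduce the statement to the known characterisation of (scalar) operator $\Sone$-multipliers already established in \cite{CleMS}, using the duality between $\Stwo \times \Stwo \to \Sone$ bilinear maps and an appropriate completely bounded map, together with a measurable cross-section / disintegration argument in the $Z$-variable. First I would make the equivalence (i)$\iff$(ii) explicit by recalling that, by \cite[Corollary 10]{CleMS}, for a bounded measurable $\nph$ the bilinear map $\Lambda_\nph$ has norm exactly $\|\nph\|_\infty$ when $\Stwo(Y,Z)$ and $\Stwo(X,Y)$ carry their Hilbert--Schmidt norms; the point of being an operator $\Sone$-multiplier is that $\Lambda_\nph$ then factors boundedly through the trace class. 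I would invoke the characterisation from \cite{CleMS} that $\nph$ is an operator $\Sone$-multiplier if and only if it admits a factorisation of the form $\nph(x,y,z)=\ip{v(x,z)}{w(y,z)}$ with the two essential suprema finite, and that the optimal such factorisation realises $\|\nph\|_\Sch$ as the product of the two essential suprema. The bulk of the work is then organising this as a clean ``if and only if'' with the stated norm equality.

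For the direction (ii)$\implies$(i), given $v,w$ as in \eqref{eq_vw}, I would substitute the factorisation directly into the formula for $\nph(h*k)$ and use Fubini together with the Cauchy--Schwarz inequality in $\HHH$ to write $\Lambda_\nph(T_h,T_k)$ as a product of two Hilbert--Schmidt operators (one built from $v$ and $h$, one from $w$ and $k$), hence a trace-class operator, with trace-class norm bounded by $\esssup\|v\|\cdot\esssup\|w\|\cdot\|T_h\|_2\|T_k\|_2$. This is the routine half: it is essentially the computation that a ``Schur-type'' factorisation forces membership in $\Sone$, and it simultaneously gives the inequality $\|\nph\|_\Sch \le \esssup_{(x,z)}\|v(x,z)\|\,\esssup_{(y,z)}\|w(y,z)\|$.

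For the harder direction (i)$\implies$(ii), I would run the argument of \cite{CleMS}: assume $\Lambda_\nph$ maps into $\Sone(X,Z)$, deduce (by the closed graph theorem, since $\Lambda_\nph$ is bounded for the HS norms by \cite[Corollary 10]{CleMS} and $\Sone$ embeds continuously into $\Stwo$) that it is bounded as a bilinear map $\Stwo(Y,Z)\times\Stwo(X,Y)\to\Sone(X,Z)$, and then dualise. Via the trace duality $\Sone(X,Z)=\Cpt[L^2(Z),L^2(X)]^*$ one obtains a completely bounded bilinear map, or equivalently a suitable factorisation through a Hilbert space, which after unwinding yields weakly measurable vector-valued functions $v$ on $X\times Z$ and $w$ on $Y\times Z$ with the required $L^\infty$ bounds and \eqref{eq_vw}. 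The measurability of $v$ and $w$ — extracting genuinely (weakly) measurable selections from the abstract factorisation, uniformly in the $Z$-variable — is the main obstacle, and here I would lean on the standing separability/standard-measure-space assumptions, which is precisely what \cite{CleMS} uses; alternatively one can argue by disintegrating over $Z$ and applying the scalar Grothendieck-type factorisation (the equivalence of (i)--(iv) in the preceding scalar Schur-multiplier theorem) fibrewise, then gluing measurably. Finally, the norm identity $\|\nph\|_\Sch=\esssup\|v\|\,\esssup\|w\|$ follows by combining the inequality from (ii)$\implies$(i) with the reverse inequality that the factorisation produced in (i)$\implies$(ii) achieves; I would cite \cite[Corollary 10]{CleMS} for the sharp constant rather than reprove it.
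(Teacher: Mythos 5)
The paper does not actually prove Theorem~\ref{th_dia0}: it is quoted as a result ``obtained in \cite{CleMS}'', so the citation-level core of your plan --- reduce everything to the Coine--Le Merdy--Sukochev factorisation theorem and to \cite[Corollary 10]{CleMS} for the sharp constant --- is exactly what the authors do, and your outline of the hard direction (closed graph, trace duality, measurable selections under the standing separability assumptions) is consistent with the argument in \cite{CleMS}.

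There is, however, a concrete problem with your ``routine half'' (ii)$\implies$(i) as you describe it, and it is worth naming because it exposes a mismatch in the statement itself. In the displayed definition of $\Lambda_\nph$, the integrated (middle) variable is $y$: the kernel of $\Lambda_\nph(T_h,T_k)$ is $(z,x)\mapsto\int_Y\nph(x,y,z)h(z,y)k(y,x)\,dy$, so $x$ and $z$ are the two outer variables of the resulting operator from $L^2(X)$ to $L^2(Z)$. The factorisation in (ii), $\nph(x,y,z)=\ip{v(x,z)}{w(y,z)}$, has both factors sharing the \emph{outer} variable $z$ rather than the middle variable $y$. Substituting it into the kernel yields $\ip{v(x,z)}{u(z,x)}$ with $u(z,x)=\int_Y\overline{h(z,y)k(y,x)}\,w(y,z)\,dy$, and since $v(x,z)$ couples the two output variables this is not of the form $\ip{A(x)}{B(z)}$; Cauchy--Schwarz does not split $\Lambda_\nph(T_h,T_k)$ into a product of two Hilbert--Schmidt operators, one built from $v,h$ and one from $w,k$. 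Indeed, with these literal conventions (ii) does not imply (i): take $\nph(x,y,z)=f(x,z)$ with $f$ bounded measurable on $X\times Z$ but not a Schur multiplier (e.g.\ a triangle-truncation symbol); then (ii) holds with $\HHH=\CC$ and $w\equiv 1$, while $\Lambda_\nph(T_h,T_k)$ is the Schur product of $f$ with the kernel of the trace-class operator $T_hT_k$, which need not lie in $\Sone$. In \cite{CleMS} the two factorising functions both depend on the variable that is integrated out; for the theorem to hold as stated, the composition defining $\Lambda_\nph$ must have $Z$, not $Y$, as the middle space --- which is also what the proof of Theorem~\ref{th_dia} implicitly requires, since there $\ip{v(x,z)}{w(y,z)}$ encodes ``a Schur multiplier in $(x,y)$, uniformly in $z$''. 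Your computation, and the resulting inequality $\|\nph\|_\Sch\le\esssup\|v\|\,\esssup\|w\|$, go through only once the variables are arranged so that $v$ and $w$ share the integrated variable.
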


In Theorem \ref{th_dia}, we relate operator $\mathcal{S}_1$-multipliers to central multipliers.
We first include a lemma. If $\cl E$ is an operator space then we identify 
$C_0(Z)\odot \cl E$ with a dense subspace of the minimal tensor product $C_0(Z)\otimes \cl E$
(and equip it with the operator space structure arising from this inclusion),
and its elements --- with continuous functions from $Z$ into $\cl E$. 
If $\cl E$ is in addition an operator system, 
we equip the algebraic tensor product
$C_0(Z)\odot \cl E$ with the operator system structure arising from its inclusion in 
$C_0(Z)\otimes\cl E$.

\begin{lemma}\label{l_diagon}
Let $Z$ be a locally compact Hausdorff space and $\cl E$ be an operator space. 
Let $\Phi_z : \cl E\to \cl E$ be a linear map, $z\in Z$, and 
$\Phi : C_0(Z)\odot \cl E\to C_0(Z)\otimes \cl E$ 
a linear map defined by 
$$\Phi(a\otimes T)(z) = a(z)\Phi_z(T), \ \ z\in Z.$$
The following are equivalent:
\begin{enumerate}[i.]
    \item $\Phi$ is completely bounded;
    \item $\Phi_z$ is completely bounded for every $z\in Z$ and $\sup_{z\in Z}\|\Phi_z\|_{\rm cb} < \infty$.
\end{enumerate}
Moreover, if these conditions are fulfilled then 
$\|\Phi\|_{\rm cb} = \sup_{z\in Z}\|\Phi_z\|_{\rm cb}$.

Assume that $\cl E$ is an operator system. 
The following are equivalent:
\begin{enumerate}[i'.]
    \item $\Phi$ is completely positive;
    \item $\Phi_z$ is completely positive for every $z\in Z$. 
\end{enumerate}
\end{lemma}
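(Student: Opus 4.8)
The plan is to reduce everything to the pointwise evaluation maps and to a standard ``diagonal'' argument. First observe that the two implications (ii)$\implies$(i) and (i')$\implies$(i) are the easy directions: if each $\Phi_z$ is completely bounded with $\sup_z \|\Phi_z\|_{\cb} = C < \infty$, then for $u = \sum_i a_i \otimes T_i \in M_n(C_0(Z)\odot \cl E)$ one has $\Phi^{(n)}(u)(z) = \Phi_z^{(n)}(\sum_i a_i(z) T_i)$, an element of $M_n(\cl E)$ of norm at most $C \|\sum_i a_i(z) T_i\|_{M_n(\cl E)} \le C \|u\|_{M_n(C_0(Z)\otimes\cl E)}$, where the last inequality uses that evaluation at $z$ is a complete contraction $C_0(Z)\otimes\cl E \to \cl E$ (it is $\mathrm{ev}_z \otimes \id$, and $\mathrm{ev}_z$ is a $*$-homomorphism, hence completely contractive on $C_0(Z)$). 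Taking the supremum over $z$ gives $\|\Phi^{(n)}(u)\| \le C\|u\|$, so $\Phi$ extends to a completely bounded map with $\|\Phi\|_{\cb} \le \sup_z \|\Phi_z\|_{\cb}$. The completely positive case is identical: $\mathrm{ev}_z\otimes\id$ is completely positive, positivity is detected pointwise in $C_0(Z,\cl E) = C_0(Z)\otimes\cl E$, and $\Phi^{(n)}(u)(z) = \Phi_z^{(n)}(u(z))$ is positive for each $z$ whenever $u \ge 0$.

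For the converse directions, the key is to recover $\Phi_z$ from $\Phi$ by composing with a point evaluation in the \emph{other} direction. Fix $z_0 \in Z$ and, using local compactness and Urysohn's lemma, choose $a \in C_0(Z)$ with $a(z_0) = 1$ and $\|a\|_\infty = 1$. For $T \in \cl E$ we have $\Phi(a\otimes T)(z_0) = \Phi_{z_0}(T)$, i.e. $\Phi_{z_0} = \mathrm{ev}_{z_0}\circ\Phi\circ(a\otimes(\cdot))$. Each of the three maps in this composition is completely bounded: $\mathrm{ev}_{z_0}\otimes\id : C_0(Z)\otimes\cl E\to\cl E$ is a complete contraction as above, $\Phi$ is completely bounded by hypothesis, and $T\mapsto a\otimes T$ is completely contractive since $\|a\|_\infty = 1$ (it is the amplification of the complete isometry $\cl E \to C_0(Z)\otimes\cl E$ composed with multiplication by the contraction $a$). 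Hence $\Phi_{z_0}$ is completely bounded with $\|\Phi_{z_0}\|_{\cb} \le \|\Phi\|_{\cb}$, and taking the supremum over $z_0$ yields $\sup_z \|\Phi_z\|_{\cb}\le\|\Phi\|_{\cb}$, completing (i)$\implies$(ii) and, together with the previous paragraph, the norm equality $\|\Phi\|_{\cb} = \sup_z\|\Phi_z\|_{\cb}$. For (i')$\implies$(ii'), the same composition $\Phi_{z_0} = \mathrm{ev}_{z_0}\circ\Phi\circ(a\otimes(\cdot))$ works provided $a$ is chosen with $a \ge 0$, which Urysohn's lemma permits; then $a\otimes(\cdot)$ is completely positive (tensoring the positive element $a$), $\mathrm{ev}_{z_0}$ is completely positive, and so $\Phi_{z_0}$ is completely positive.

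The main subtlety to handle carefully is the role of non-compactness of $Z$ and of the distinction between $C_0(Z)\odot\cl E$ and its completion: $\Phi$ is only assumed defined on the algebraic tensor product, so ``completely bounded'' means its $n$-th amplifications are uniformly bounded in the operator-space norm inherited from $C_0(Z)\otimes\cl E$, and the argument above should be phrased on elementary tensors and finite sums thereof, with the conclusion about $\Phi$ extending to $C_0(Z)\otimes\cl E$ drawn only in the (ii)$\implies$(i) direction. One should also note that when $Z$ is merely locally compact Hausdorff there is no unit in $C_0(Z)$, so the function $a$ with $a(z_0)=1$ must be produced by Urysohn rather than taken to be $1$; this is the only place local compactness is used, and it is harmless. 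With these points observed the proof is routine.
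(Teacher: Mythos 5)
Your proof is correct and follows essentially the same route as the paper's: both directions are handled by composing with the evaluation maps $\delta_{z}\otimes\id$ and with $T\mapsto a\otimes T$ for a norm-one (respectively positive) Urysohn function $a$ peaking at $z$, and the converse direction uses the identification of $M_n(C_0(Z)\odot\cl E)$ inside $C_0(Z,M_n(\cl E))$ together with pointwise detection of norm and positivity. The only difference is presentational (you make the complete contractivity of the two auxiliary maps explicit), so there is nothing further to add.
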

\begin{proof}
(i)$\implies$(ii) 
Fix $z\in Z$ and note that, if $a\in C_0(Z)$ has norm one and $a(z) = 1$ then 
$$\Phi_z(T) = (\delta_z\otimes\id)(\Phi(a\otimes T)), \ \ \ T\in \cl E.$$
It follows that $\Phi_z$ is completely bounded and 
\begin{equation}\label{eq_ine}
    \sup_{z\in Z}\|\Phi_z\|_{\rm cb}\leq \|\Phi\|_{\rm cb}. 
\end{equation}

(ii)$\implies$(i) 
We identify $M_n(C_0(Z)\odot \cl E)$ with a subspace of $C_0(Z,M_n(\cl E))$ in the canonical way. 
Let $(h_{i,j})_{i,j}\in M_n(C_0(Z)\odot \cl E)$. The claim is immediate from the fact that
$$\Phi^{(n)}\left((h_{i,j})_{i,j}\right)(z)
= \left(\Phi(h_{i,j})(z)\right)_{i,j}
= \left(\Phi_z(h_{i,j}(z))\right)_{i,j}.$$

It remains to note the reverse inequality in (\ref{eq_ine}); 
it follows by the fact that, if $a\in C_0(Z)$ has norm one and $a(z) = 1$ then
$\|\Phi_z^{(n)}(T)\| \leq \|\Phi^{(n)}(a\otimes T)\|$, for every $T\in M_n(\cl E)$.

Now assume that $\cl E$ is an operator system. 

(i')$\implies$(ii') follows as the implication (i)$\implies$(ii), by choosing 
the function $a$ to be in addition positive. 

(ii')$\implies$(i') follows similarly to the implication (ii)$\implies$(i), by taking into account that 
a matrix $(h_{i,j})_{i,j}$ belongs to the positive cone of $M_n(C_0(Z)\odot \cl E)$ if and only if 
$(h_{i,j}(z))_{i,j}\in M_n^+$ for every $z\in Z$. 
\end{proof}

\begin{theorem}\label{th_dia}
Let $\nph : X\times Y \times Z\to \CC$ be a bounded measurable function, continuous in the $Z$-variable.
The following are  equivalent:
\begin{enumerate}[i.]
    \item $\nph$ is a  central Schur $C_0(Z)$-multiplier; 
    \item the function $\nph_z$ is a Schur multiplier for every $z\in Z$, and the map $D_{\nph} : C_0(Z,\cl K) \to C_0(Z,\cl K)$ given by
    \[
        D_{\nph}(h)(z)  = S_{\nph_z}(h(z)), \quad z\in Z,
    \]
is completely  bounded;
    \item the function $\nph_z$ is a Schur multiplier for every $z\in Z$, and 
       \[
        \sup_{z\in Z} \|\nph_z\|_\Sch < \infty;
    \]
    \item the function $\nph$ is an operator $\mathcal{S}_1$-multiplier. 
\end{enumerate}
If these conditions hold then $\| \varphi \|_\Sch = \sup_{z \in Z} \| \varphi_z \|_\Sch$.
\end{theorem}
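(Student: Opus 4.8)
The plan is to establish the cycle of implications (i) $\Rightarrow$ (ii) $\Rightarrow$ (iii) $\Rightarrow$ (iv) $\Rightarrow$ (i), invoking Lemma~\ref{l_diagon} for the first two arrows and Theorem~\ref{th_dia0} together with Theorem~\ref{th:Schurmultsmtt} for the remaining ones. First I would treat (i) $\Leftrightarrow$ (ii): a central Schur $C_0(Z)$-multiplier is precisely the map $S_\nph$ on $\Stwo(X,Y;C_0(Z))$ given on elementary tensors $a \otimes T_k$ by $a \otimes S_{\nph_z}$-type formulas, after identifying $\Stwo(X,Y;C_0(Z))$ with a dense subspace of $\cl K \otimes C_0(Z) = C_0(Z,\cl K)$ via \eqref{eq_KC}. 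Under this identification $S_\nph$ is exactly the map $D_\nph$ of part (ii), written in the ``diagonal'' form $\Phi(a \otimes T)(z) = a(z)\Phi_z(T)$ with $\Phi_z = S_{\nph_z}$ acting on $\cl E = \cl K$; so Lemma~\ref{l_diagon} (with $\cl E = \cl K$) gives that $S_\nph$ is completely bounded if and only if each $S_{\nph_z}$ is completely bounded with $\sup_z \|S_{\nph_z}\|_\cb < \infty$, which is the content of (ii) and also yields $\|\nph\|_\Sch = \sup_z \|\nph_z\|_\Sch$. I must check here that each $\nph_z$ is genuinely pointwise-measurable/measurable as a scalar Schur multiplier candidate, which is immediate since $\nph$ is measurable in $(x,y)$ for each fixed $z$.

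Next, (ii) $\Leftrightarrow$ (iii) is essentially the ``moreover'' clause of Lemma~\ref{l_diagon} again: complete boundedness of $D_\nph$ is equivalent to uniform boundedness of $\|S_{\nph_z}\|_\cb = \|\nph_z\|_\Sch$ over $z \in Z$. (One small point: part (iii) asserts each $\nph_z$ is a Schur multiplier and the sup is finite; part (ii) asserts the same plus $D_\nph$ is completely bounded — these are equated by the Lemma.) Then for (iii) $\Leftrightarrow$ (iv) I would use the two characterisation theorems in tandem. Given (iv), Theorem~\ref{th_dia0} supplies a Hilbert space $\HHH$ and weakly measurable $v : X \times Z \to \HHH$, $w : Y \times Z \to \HHH$ with $\nph(x,y,z) = \ip{v(x,z)}{w(y,z)}$ and $\esssup \|v\| \, \esssup \|w\| = \|\nph\|_\Sch < \infty$. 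For fixed $z$, the functions $v_z := v(\cdot,z)$ and $w_z := w(\cdot,z)$ are (after discarding a null set of $z$) weakly measurable with bounded norm, so by the scalar characterisation (the theorem following Theorem~\ref{th:Schurmultsmtt}, clause iii) $\nph_z \in \mathfrak{S}(X,Y)$ with $\|\nph_z\|_\Sch \le \esssup_x \|v(x,z)\| \cdot \esssup_y \|w(y,z)\| \le \esssup \|v\|\,\esssup\|w\|$, giving $\sup_z\|\nph_z\|_\Sch < \infty$, i.e.\ (iii). Conversely, from (iii) I would like to manufacture a single pair $(v,w)$ of functions on $X\times Z$, $Y\times Z$; the natural route is a measurable selection: for each $z$ apply Theorem~\ref{th:Schurmultsmtt} (scalar case) to get $v_z, w_z$, and assemble them measurably in $z$.

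The main obstacle I anticipate is exactly this measurable-in-$z$ assembly in the direction (iii) $\Rightarrow$ (iv): the scalar representation $\nph_z(x,y) = \ip{v_z(x)}{w_z(y)}$ is far from unique, so one needs either a measurable selection argument (using that $Z$ is standard Borel and the relevant sets of representations are Borel) or a soft functional-analytic trick to bypass it. A clean way around it is to avoid selection altogether and instead feed (iii) back through (ii) $\Rightarrow$ (i): if $\sup_z\|\nph_z\|_\Sch < \infty$ then by Lemma~\ref{l_diagon} the diagonal map $D_\nph = S_\nph$ is completely bounded on $C_0(Z,\cl K)$, hence $\nph$ is a central Schur $C_0(Z)$-multiplier, i.e.\ (i) holds; so it suffices to prove (i) $\Rightarrow$ (iv) directly, and for that I would apply Theorem~\ref{th:Schurmultsmtt} to the Schur $C_0(Z)$-multiplier $\nph$ to obtain a representation $\rho : C_0(Z) \to \cl B(\HH_\rho)$ and $\mathcal{V} \in L^\infty(X,\cl B(\HH,\HH_\rho))$, $\mathcal{W} \in L^\infty(Y,\cl B(\HH,\HH_\rho))$ with $\nph(x,y)(a) = \mathcal W(y)^*\rho(a)\mathcal V(x)$ and $\esssup\|\mathcal V\|\esssup\|\mathcal W\| = \|\nph\|_\Sch$; since $\rho$ is a representation of the commutative algebra $C_0(Z)$ one disintegrates it over $Z$ (using that $Z$ is second-countable) as $\rho(a) = \int_Z^\oplus a(z)\,\id_{\HH_z}\,d\mu(z)$ for a direct-integral decomposition $\HH_\rho = \int_Z^\oplus \HH_z\,d\mu(z)$, and then setting $v(x,z) := \mathcal V(x)$ composed with the fibre projection at $z$ (viewed in the fibre $\HH_z$), and similarly $w(y,z)$, one reads off $\nph(x,y,z) = \ip{v(x,z)}{w(y,z)}_{\HH_z}$ pointwise, after absorbing the varying fibres into a single Hilbert space $\HHH \supseteq \HH_z$; this is exactly form (ii) of Theorem~\ref{th_dia0}, so $\nph$ is an operator $\Sone$-multiplier. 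This disintegration step — making the fibrewise vectors weakly measurable in $z$ — is the delicate part, but it is standard direct-integral theory over a standard Borel space and poses no essential difficulty. Finally the norm equality $\|\nph\|_\Sch = \sup_z\|\nph_z\|_\Sch$ has already been delivered by Lemma~\ref{l_diagon} in the (i) $\Leftrightarrow$ (ii) step.
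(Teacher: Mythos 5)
Your treatment of (i)$\iff$(ii)$\iff$(iii) via the identification $S_\nph = D_\nph$ under (\ref{eq_KC}) and Lemma~\ref{l_diagon}, and the resulting norm equality, matches the paper exactly and is fine; your route (iv)$\implies$(iii)$\implies$(i) is a reasonable variant of the paper's direct (iv)$\implies$(i) (though note that Theorem~\ref{th_dia0} only gives the inner-product representation for almost every $z$, so to recover ``$\nph_z$ is a Schur multiplier for \emph{every} $z$'' you still need to use the continuity in $z$ and the fact that the supporting measure on $Z$ has full support). The genuine gap is in your proof of (i)$\implies$(iv). Theorem~\ref{th:Schurmultsmtt} produces a representation $\rho$ of $C_0(Z)$ and operators $\mathcal V(x), \mathcal W(y) \in \Bd[L^2(Z),\HH_\rho]$ with $\mathcal W(y)^*\rho(a)\mathcal V(x) = M_{\nph_{x,y}a}$, but it gives \emph{no} compatibility between $\mathcal V(x)$ and the $C_0(Z)$-module structures on the two sides. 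After disintegrating $\rho$ as $\int_Z^\oplus \HH_z\,d\tilde m(z)$, the operators $\mathcal V(x)$ need not be decomposable, so ``$\mathcal V(x)$ composed with the fibre projection at $z$'' is an operator $L^2(Z)\to\HH_z$, not a vector: there is no well-defined $v(x,z)\in\HH_z$ to read off, and the identity $\nph(x,y,z)=\ip{v(x,z)}{w(y,z)}$ does not follow. Even if one upgrades the dilation to a $C_0(Z)$-bimodule dilation so that the $\mathcal V(x)$ become decomposable, one must still establish that the fibre vectors are \emph{uniformly essentially bounded in $z$} and jointly weakly measurable in $(x,z)$ -- and this is precisely the nontrivial analytic content of the operator $\Sone$-multiplier characterisation, not ``standard direct-integral theory''.

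The paper avoids this entirely: it defines $\psi_f(x,y) := \int_Z \nph(x,y,z)f(z)\,dz$ for $f\in L^1(Z)$, shows $\|\psi_f\|_\Sch \le \|\nph\|_\Sch\|f\|_1$ by a direct estimate using $T_{k\otimes a}$, thereby realises $\nph$ as an element of $\Bd[L^1(Z), L^\infty(X)\otimes^{w^*h}L^\infty(Y)] \cong L^\infty(Z)\btens(L^\infty(X)\otimes^{w^*h}L^\infty(Y))$, and then invokes \cite[Theorem 19]{CleMS} together with Theorem~\ref{th_dia0}. In other words, the hard step you are trying to do by hand (producing a single measurable, fibrewise bounded pair $(v,w)$ on $X\times Z$ and $Y\times Z$) is exactly what is outsourced to Coine--Le Merdy--Sukochev. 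To repair your argument you should either reproduce that duality identification and cite \cite[Theorem 19]{CleMS}, or supply a genuine bimodule-dilation argument with the boundedness and measurability of the fibre vectors proved explicitly.
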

\begin{proof}
(i)$\iff$(ii) 
Let $\varphi$ be a central Schur $C_0(Z)$-multiplier. 
We fix a measure $m\in M(Z)$ so that the 
representation of $C_0(Z)$ on $L^2(Z,m)$, given by 
$a \mapsto M_a$, where 
\[
    (M_a \xi)(z) := a(z)\xi(z),  \quad a\in C_0(Z),\ \xi\in L^2(Z, m),\ z\in Z,
\]
is faithful. 
By \cite[Proposition 2.3]{mtt}, 
we may identify $C_0(Z)$ with its image in 
$\cl B(L^2(Z))$, so we abuse notation by writing $a$ in place of $M_a$.
We recall that the map $S_\varphi$ extends to a completely bounded map on $\cl K\otimes C_0(Z)$.
We observe that, when the identification (\ref{eq_KC}) is made, we have that the map 
$S_{\nph}$ (which is defined as a transformation on $\cl K\otimes C_0(Z)$)
is identified with $D_{\nph}$. 
Indeed, if $k\in L^2(Y\times X)$ and $a\in C_0(Z)$ then 
\[
    S_{\nph}(k\otimes a)(z) = (\nph(\cdot,\cdot,z)\cdot k) a(z) = D_{\nph}(k\otimes a)(z), \quad z\in Z.
\]
The equivalence now follows. 

(ii)$\Longleftrightarrow$(iii) 
is immediate from Lemma \ref{l_diagon}. 

(i)$\implies$(iv) Define a map $\psi : f \mapsto \psi_f$, on $L^1(Z)$ by letting
\[
	\psi_f (x,y) := \int_Z \varphi(x,y,z) f(z) dz , \quad (x,y) \in X \times Y .
\]
We will show that $\psi_f$ belongs to $L^\infty(X) \otimes^{w^*h} L^\infty(Y)$ and has norm at most $\| \varphi \|_\Sch$.
Take $f \in C_c(Z)$, $k \in L^2(Y \times X)$, and $a \in C_0(Z)$ with $\| a \| = 1$ and $a(z) = 1$ for all 
$z \in \supp(f)$. 
Writing $f = f_1 f_2$, $f_1,f_2 \in L^2(Z)$, $\| f\|_1 = \| f_1 \|_2 \| f_2 \|_2$, for $\xi \in L^2(X)$ and $\eta\in L^2(Y)$, we have
\[
\begin{split}
	\left| \ip{S_{\psi_f}(T_k)\xi}{\eta} \right| &= \left| \int_{X \times Y} \left( \int_Z \varphi(x,y,z) f(z) dz \right) k(y,x) \xi(x) \overline{\eta(y)} dx \, dy \right| \\
		&= \left| \ip{S_\varphi(T_{k \otimes a}) (\xi \otimes f_1)}{\eta \otimes f_2} \right| \\
		&\leq \| \varphi \|_\Sch \| T_{k \otimes a} \| \| \xi \|_2 \| f_1 \|_2 \| \eta \|_2 \| f_2 \|_2 \\
		&\leq \| \varphi \|_\Sch \| T_k \| \| f \|_1 \| \xi \|_2 \| \eta \|_2.
\end{split}
\]
Thus the map $S_{\psi_f}$ is bounded in the operator norm, implying that $\psi_f$ is a Schur multiplier
with $\| \psi_f \|_\Sch \leq \| \varphi \|_\Sch \| f \|_1$.
It follows from the density of $C_c(Z)$ in $L^1(Z)$ that $\psi$ is a bounded map, with $\| \psi \| \leq \| \varphi \|_\Sch$; we view $\psi$ as taking values in $L^\infty(X) \otimes^{w^*h} L^\infty(Y)$ using the standard identification of this tensor product with the Schur multipliers on $X \times Y$.

By standard operator space identifications (see \cite{CleMS} and \cite{lemerdy-tt}), 
we have
\[
	\psi \in \Bd[L^1(Z) , L^\infty(X) \otimes^{w^*h} L^\infty(Y)] \cong L^\infty(Z) \btens (L^\infty(X) \otimes^{w^*h} L^\infty(Y)),
\]
where $\varphi\in L^\infty(X\times Y\times Z)$ 
is the corresponding element in $L^\infty(Z) \btens (L^\infty(X) \otimes^{w^*h} L^\infty(Y))$.
Condition (iv) now follows by \cite[Theorem 19]{CleMS} and Theorem \ref{th_dia0}.

(iv)$\implies$(i) 
Let $v$ and $w$ be the functions arising as in Theorem \ref{th_dia0}, and
$M\subseteq X\times Y\times Z$ be a set with $(\mu\times\nu\times m)(M^c) = 0$, such that 
(\ref{eq_vw}) holds for all $(x,y,z) \in M$. 
Set $M_{x,y} = \{z: (x,y,z)\in M\}$ and $N = \{(x,y): m(M_{x,y}^c) = 0\}$; 
it is clear that $(\mu\times \nu)(N^c) = 0$.
Write $\mathcal{W}(y): L^2(Z)\to \HHH \otimes L^2(Z)$ and $\mathcal{V}(x): L^2(Z)\to \HHH \otimes L^2(Z)$ 
for the maps, given by
\[
    \big( \mathcal{V}(x) \xi \big)(z) := v(x,z) \xi(z) \text{ and } \big( \mathcal{W}(y) \xi \big)(z) := w(y,z) \xi(z), \quad \xi\in L^2(Z);
\]
we have
\begin{align*}
    \esssup_{x\in X} \| \mathcal{V}(x) \| &= \esssup_{(x,z) \in X\times Z} \| v(x,z) \| < \infty, \\
    \esssup_{y\in Y} \| \mathcal{W}(y) \| &= \esssup_{(y,z) \in Y\times Z} \| w(y,z) \| < \infty.
\end{align*}
For $a\in C_0(Z)$, $\xi,\eta \in L^2(Z)$ and $(x,y)\in N$, we have
\[
\begin{split}
    \left\langle \mathcal{W}(y)^* (I \otimes M_a) \mathcal{V}(x)\xi,\eta \right\rangle &=  
\left\langle (I \otimes M_a) \mathcal{V}(x)\xi,\mathcal{W}(y)\eta \right\rangle \\
        &= \int_Z a(z) \langle v(x,z),w(y,z)\rangle \xi(z)\overline{\eta(z)} dm(z)\\
        &= \int_Z a(z) \nph(x,y,z) \xi(z)\overline{\eta(z)} dm(z).
\end{split}
\]
It follows that, if $(x,y)\in N$ then
\[
    \mathcal{W}(y)^* (I \otimes M_a) \mathcal{V}(x) = M_{\nph_{x,y} a}, \quad a\in C_0(Z)
\]
(here $\nph_{x,y}$ is the function on $Z$ given by $\nph_{x,y}(z) = \nph(x,y,z)$). 
By \cite[Theorem 2.6]{mtt}, $\varphi$ is a Schur $C_0(Z)$-multiplier which is clearly central, and 
\[
    \| \varphi \|_\Sch \leq \esssup_{x \in X} \| \mathcal{V}(x) \| \esssup_{y \in Y} \| \mathcal{W}(y) \| 
    = \esssup_{z \in Z} \| \varphi_z \|_\Sch .
\]
 
Finally, from the proof of (i)$\implies$(ii)$\implies$(iii), equation (\ref{eq_ine}), 
and the estimate in (iv)$\implies$(i) we have 
$\| \varphi \|_\Sch = \sup_{z \in Z} \| \varphi_z \|_\Sch$.
\end{proof}

In the next result we assume that $A$ acts non-degenerately on a separable Hilbert space $\HH$, 
and we identify the elements of the centre $Z(\Mult(A))$ of $A$ with completely bounded maps on $A$ 
acting by operator multiplication. 

\begin{corollary}\label{th:centralmultsgeneral}
Let $\varphi : X \times Y \to Z(\Mult(A))$ be a pointwise measurable function, and assume that $\overline{ Z(A) A} = A$. 
The following are equivalent: 
\begin{enumerate}[i.]
    \item $\varphi$ is a central Schur $A$-multiplier;
    \item there exist an index set $I$ and operators
    $V \in C_I^\omega( L^\infty(X,Z(A)''))$ and $W \in C_I^\omega(L^\infty(Y ,Z(A)''))$, such that 
    \[
    	\varphi(x,y) = \sum_{i\in I} W_i(y)^* V_i(x), \quad \text{ for almost all $(x,y) \in X\times Y$.}
    \]
\end{enumerate}
Moreover, if $\varphi : X \times Y \to Z(\Mult(A))$ is weakly measurable then the above conditions are equivalent to:
\begin{enumerate}[i.] \setcounter{enumi}{2}
    \item $\varphi$ is a central Schur $B$-multiplier for any $C^*$-algebra $B \subseteq \Bd$ with $Z(A) \subseteq Z(B)$.
\end{enumerate}
If the conditions hold we may choose $V,W$ such that 
\[
	\| \varphi \|_\Sch = \| V \|_{C_I^\omega( L^\infty(X,Z(A)''))} \| W \|_{C_I^\omega( L^\infty(Y,Z(A)''))} , 
\]
where $\| \varphi \|_\Sch$ is the norm of the Schur multiplier in either (i) or (iii).
\end{corollary}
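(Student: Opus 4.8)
\textit{Plan.} The plan is to reduce the statement to the commutative case already treated in Theorem~\ref{th_dia} and Theorem~\ref{th_dia0}, applied to the centre $Z(A)$, which is a separable abelian $C^*$-algebra, say $Z(A)=C_0(Z)$ with $Z$ second-countable. The hypothesis $\overline{Z(A)A}=A$ is exactly what makes this reduction go through: it forces $Z(A)$ to act non-degenerately on $\HH$, since $\overline{Z(A)\HH}\supseteq\overline{Z(A)A\HH}=\overline{A\HH}=\HH$. Hence $Z(A)''\subseteq\cl B(\HH)$ is a commutative von Neumann algebra containing $1_\HH$, so $Z(A)''\cong L^\infty(Z,\mu_Z)$ for a suitable Radon measure $\mu_Z$ on $Z$, with $C_0(Z)$ weak*-dense in it. I will first record that every $c\in Z(\Mult(A))$ satisfies $cZ(A)\subseteq Z(A)$ (a one-line check: for $d\in Z(A)$ and $b\in A$ one has $cd\in A$ and $(cd)b=c(bd)=(cb)d=b(cd)$), so $c$ restricts to a multiplier of $C_0(Z)$, i.e.\ to an element of $C_b(Z)\subseteq L^\infty(Z,\mu_Z)$; by non-degeneracy this restriction is injective and identifies the operator $c$ on $\HH$ with multiplication by the corresponding function, so $Z(\Mult(A))$ is identified with a subalgebra of $Z(A)''$. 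Writing $a_{x,y}\in Z(\Mult(A))$ for the family attached to $\varphi$ by Definition~\ref{d_ce}, it follows that $\varphi(x,y)$ leaves $Z(A)$ invariant, hence by Remark~\ref{r_suba} the restriction $\varphi|_{Z(A)}$ is a central Schur $C_0(Z)$-multiplier with $\|\varphi|_{Z(A)}\|_\Sch\le\|\varphi\|_\Sch$; identifying it with the scalar function $(x,y,z)\mapsto a_{x,y}(z)$ on $X\times Y\times Z$ as in the discussion preceding Theorem~\ref{th_dia0}, I may apply Theorems~\ref{th_dia} and~\ref{th_dia0} to it.

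\textit{(i)$\Rightarrow$(ii).} Assuming $\varphi$ is a central Schur $A$-multiplier, Theorem~\ref{th_dia} gives that $(x,y,z)\mapsto a_{x,y}(z)$ is an operator $\Sone$-multiplier, and Theorem~\ref{th_dia0} produces a Hilbert space $\HHH$ and essentially bounded weakly measurable $v:X\times Z\to\HHH$, $w:Y\times Z\to\HHH$ with $a_{x,y}(z)=\ip{v(x,z)}{w(y,z)}$ for almost all $(x,y,z)$, chosen so that $\|\varphi|_{Z(A)}\|_\Sch=\esssup_{(x,z)}\|v(x,z)\|\cdot\esssup_{(y,z)}\|w(y,z)\|$. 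Fixing an orthonormal basis $(e_i)_{i\in I}$ of $\HHH$ and setting $V_i(x):=\ip{v(x,\cdot)}{e_i}$, $W_i(y):=\ip{w(y,\cdot)}{e_i}$, which lie in $L^\infty(Z,\mu_Z)=Z(A)''$, one checks that $V=(V_i)_{i\in I}\in C_I^\omega(L^\infty(X,Z(A)''))$ and $W=(W_i)_{i\in I}\in C_I^\omega(L^\infty(Y,Z(A)''))$ with column norms $\esssup_{(x,z)}\|v(x,z)\|$ and $\esssup_{(y,z)}\|w(y,z)\|$, and that $\sum_{i\in I}W_i(y)^*V_i(x)$, computed pointwise a.e.\ in $z$, equals $a_{x,y}(\cdot)$, i.e.\ the operator $\varphi(x,y)$. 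This is (ii), with $\|V\|\,\|W\|=\|\varphi|_{Z(A)}\|_\Sch\le\|\varphi\|_\Sch$.

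\textit{(ii)$\Rightarrow$(i).} Given $I$, $V$, $W$ as in (ii), set $\HH_\rho:=\ell^2(I)\otimes\HH$, let $\rho:A\to\cl B(\HH_\rho)$, $\rho(a):=\id_{\ell^2(I)}\otimes a$ (a non-degenerate $*$-representation of $A$), and define $\cl V(x)\xi:=(V_i(x)\xi)_{i\in I}$, $\cl W(y)\eta:=(W_i(y)\eta)_{i\in I}$, so that $\cl V\in L^\infty(X,\Bd[\HH,\HH_\rho])$, $\cl W\in L^\infty(Y,\Bd[\HH,\HH_\rho])$ with $\esssup_x\|\cl V(x)\|\le\|V\|$ and $\esssup_y\|\cl W(y)\|\le\|W\|$. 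Since each $V_i(x),W_i(y)\in Z(A)''\subseteq A'$, a direct computation gives $\cl W(y)^*\rho(a)\cl V(x)=a\sum_{i\in I}W_i(y)^*V_i(x)=a_{x,y}a=\varphi(x,y)(a)$ for almost all $(x,y)$ and every $a\in A$, so by Theorem~\ref{th:Schurmultsmtt} $\varphi$ is a (clearly central) Schur $A$-multiplier with $\|\varphi\|_\Sch\le\|V\|\,\|W\|$. Choosing the $v,w$ of the previous paragraph optimally then yields the equivalence together with the norm identity $\|\varphi\|_\Sch=\|\varphi|_{Z(A)}\|_\Sch=\sup_{z\in Z}\|\varphi_z\|_\Sch$ (the last by the norm formula of Theorem~\ref{th_dia}), where $\varphi_z(x,y):=a_{x,y}(z)$.

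\textit{(iii).} Under the extra assumption that $\varphi$ is weakly measurable, (iii)$\Rightarrow$(i) is the special case $B=A$, while for (i)$\Rightarrow$(iii) one observes that the factorisation furnished by (ii) has all of its data $V_i,W_i$ lying in $L^\infty$-spaces over $Z(A)''$; if $Z(A)\subseteq Z(B)$ then $Z(A)''\subseteq Z(B)''\subseteq B'$, so the construction of (ii)$\Rightarrow$(i) can be re-run verbatim with $B$ in place of $A$, exhibiting $\varphi$ as a central Schur $B$-multiplier (weak measurability of $\varphi$ ensuring that it is a legitimate $Z(\Mult(B))$-valued function). The main obstacle is the structural step in the first paragraph --- identifying $Z(\Mult(A))$ inside $Z(A)''\cong L^\infty(Z,\mu_Z)$ and thereby reducing to the commutative Theorem~\ref{th_dia} --- which is precisely where $\overline{Z(A)A}=A$ enters; the remaining work is bookkeeping with orthonormal bases, the column spaces $C_I^\omega$, and routine measurability checks.
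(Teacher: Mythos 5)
Your proof is correct and follows essentially the same route as the paper's: reduce to the commutative case $Z(A)\cong C_0(Z)$ via Remark~\ref{r_suba} and Theorem~\ref{th_dia}, extract the factorisation from the Coine--Le~Merdy--Sukochev description, and recover the $A$-multiplier statement from the identity $\mathcal{W}(y)^*\rho(a)\mathcal{V}(x)=a\sum_i W_i(y)^*V_i(x)$, which uses $Z(A)''\subseteq A'$. The only cosmetic difference is that you obtain the $C_I^\omega$ families by expanding the Hilbert-space factorisation of Theorem~\ref{th_dia0} in an orthonormal basis, whereas the paper reads them off directly from the identification of $\varphi$ with an element of $L^\infty(Z)\btens(L^\infty(X)\otimes^{w^*h}L^\infty(Y))$.
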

\begin{proof} 
Since $\overline{ Z(A) A} = A$, the algebra $Z(A)$ is non-degenerate and 
hence $Z(A)'' = \overline{Z(A)}^{w}$, where the closure in the weak operator topology.

\smallskip

(i)$\implies$(ii) 
By Remark~\ref{r_suba}, 
$\varphi$ is a Schur $Z(A)$-multiplier. 
Following the proof of Theorem~\ref{th_dia}, and using the identification  $Z(A) \cong C_0(Z)$ and $Z(A)'' \cong L^\infty(Z , m)$, for some measure space $(Z,m)$, we identify $\varphi$ with an element of 
$L^\infty(Z,m) \btens (L^\infty(X)\otimes^{w^*h} L^\infty(Y))$. 
Using \cite{CleMS}, 
we see that there exist an index set $I$ and two families 
$(V_i)_{i\in I}, \ (W_i)_{i\in I}$, where $V_i: X \to Z(A)''$ and $W_i: Y \to Z(A)''$ are measurable functions satisfying 
\[
	\esssup_{x\in X} \left\| \sum_{i\in I} V_i(x)^* V_i(x) \right\| <\infty \ \text{ and } 
	\ \esssup_{y\in Y} \left\|\sum_{i\in I} W_i(y)^* W_i(y) \right\| <\infty,
\] 
such that
$\varphi(x,y)=\sum_{i\in I} W_i(y)^* V_i(x)$ almost everywhere on $X\times Y$ (the series converges weakly) and 
\begin{equation}\label{eq_SchXY}
	\| \varphi \|_{\Sch(X,Y;Z(A))} = \esssup_{x\in X} \left\|\sum_{i\in I} V_i(x)^* V_i(x) \right\| 
	\esssup_{y\in Y} \left\| \sum_{i\in I} W_i(y)^* W_i(y) \right\| .
\end{equation}

(ii)$\implies$(i) For $a\in A$, we have
\begin{equation}\label{eq:centralmultanyB}
    \varphi(x,y)(a) = \sum_{i\in I} W_i(y)^*V_i(x) a = \sum_{i\in I} W_i(y)^* a V_i(x) = \mathcal{W}^*(y) \rho(a) \mathcal{V}(x) ,
\end{equation}
where $\mathcal{V}(x) \defeq (V_i(x))_{i \in I} , \ \mathcal{W}(y) \defeq (W_i(y)^*)_{i \in I}$ and $\rho(a) \defeq \id_{\ell^2(I)} \otimes a$.
By \cite[Theorem 2.6]{mtt} $\nph$ is a Schur $A$-multiplier, and it is clearly central.

\smallskip

(ii)$\implies$(iii) 
The assumption implies that $(x,y)\mapsto \varphi(x,y)(b) \in B$ is 
weakly measurable for all $b\in B$, so it makes sense to speak of $\varphi$ being a Schur $B$-multiplier.
Now the same proof as that of the implication (ii)$\implies$(i) can be applied. 

\smallskip

(iii)$\implies$(i) is trivial. 

\smallskip

For the norm equality observe that $\| \varphi \|_{\Sch(X,Y;B)} \geq \| \varphi \|_{\Sch(X,Y;Z(A))}$ while, 
by (\ref{eq:centralmultanyB}), we have 
\[
\begin{split}
	\| \varphi \|_{\Sch(X,Y;B)} &\leq \esssup_{x \in X} \| \mathcal{V}(x) \| \esssup_{y \in Y} \| \mathcal{W}(y) \| \\
		&= \esssup_{x\in X} \left\|\sum_{i\in I} V_i(x)^* V_i(x) \right\| \esssup_{x\in X} \left\|\sum_{i\in I} V_i(x)^* V_i(x)\right\| \\
		&= \| V \|_{C_I^\omega( L^\infty(X,Z(A)''))} \| W \|_{C_I^\omega( L^\infty(Y,Z(A)''))} .
\end{split}
\]
The equality follows by combining this with (\ref{eq_SchXY}).
\end{proof}

We remark that the results of this subsection and the rest of the section remain true when $X$ and $Y$ are discrete spaces with counting measures, $Z$ is an arbitrary (not necessarily second countable) locally compact Hausdorff space and $A$ is an arbitrary (not necessarily separable ) $C^*$-algebra. 

\subsection{Central Herz--Schur multipliers}
\label{ssec:centralHerzSchur}

In this subsection, similarly to Theorem \ref{th_dia}, we characterise central Herz--Schur multipliers, 
a natural invariant version of central Schur multipliers, which we now introduce.

\begin{definition}\label{de:centralHSmult}
Let $(A,G,\alpha)$ be a $C^*$-dynamical system.
A Herz--Schur $(A,G,\alpha)$-multiplier $F$ will be called \emph{central} if there exists a family $(a_r)_{r \in G} \subseteq Z(\Mult(A))$ such that
\[
    F(r)(a) = a_r a , \quad a \in A,\ r \in G .
\]
\end{definition}

\begin{proposition}\label{pr:restrictcentralHS}
Let $A$ be a $C^*$-algebra such that $\overline{Z(A) A} = A$, 
$(A, G,\alpha)$ be a $C^*$-dynamical system, 
$(a_r)_{r \in G}$ be a family in 
$Z(\Mult(A))$ and suppose that the map $F: G\to \cbm(A)$, given by $F(r)(a) = a_r a$, 
is pointwise measurable.  
The following are equivalent: 
\begin{enumerate}[i.]
   \item $F$ is a central Herz--Schur $(Z(A), G, \alpha)$-multiplier;
   \item $F$ is a central Herz--Schur $(A, G,\alpha)$-multiplier;
   \item there exist $V,W \in C_I^\omega(L^\infty(G , Z(A)''))$ such that
	\[
		\alpha_{t\inv}(a_{ts\inv}) = \sum_{i \in I} W_i(t)^* V_i(s), \text{ for almost all } (s,t)\in G\times G.
	\]
\end{enumerate}
Moreover, $V$ and $W$ may be chosen so that
\[
	\| F \|_\HS = \| V \|_{C_I^\omega( L^\infty(X,Z(A)''))} \| W \|_{C_I^\omega( L^\infty(Y,Z(A)''))} 
\] 
 where $\| F \|_\HS$ refers to the norm of $F$ in either (i) or (ii).
\end{proposition}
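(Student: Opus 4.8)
The plan is to combine the completely isometric transference result of Lemma~\ref{le:Ncompleteisomhom} with the description of central Schur $A$-multipliers from Corollary~\ref{th:centralmultsgeneral}. The key observation is that for $F(r)(a) = a_r a$, the map $\mathcal{N}(F)$ is itself \emph{central}: indeed, $\mathcal{N}(F)(s,t)(a) = \alpha_{t\inv}\big(a_{ts\inv}\,\alpha_t(a)\big) = \alpha_{t\inv}(a_{ts\inv})\,a$, since $a_{ts\inv} \in Z(\Mult(A))$, so $\alpha_{t\inv}$ being an automorphism sends it to an element of $Z(\Mult(A))$ which commutes past $\alpha_t(a)$ and is fixed by $\alpha_{t\inv}\circ\alpha_t$. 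Thus $\mathcal{N}(F)$ is the central Schur $A$-multiplier on $G\times G$ associated to the family $b_{s,t} := \alpha_{t\inv}(a_{ts\inv})$.

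First I would establish (ii)$\iff$(iii). By Theorem~\ref{th:transferencemtt} (or rather Lemma~\ref{le:Ncompleteisomhom}), $F$ is a central Herz--Schur $(A,G,\alpha)$-multiplier if and only if $\mathcal{N}(F)$ is a Schur $A$-multiplier, isometrically. Since $\mathcal{N}(F)$ is central with associated family $(b_{s,t})$ and $\overline{Z(A)A} = A$, Corollary~\ref{th:centralmultsgeneral} applies: $\mathcal{N}(F)$ is a central Schur $A$-multiplier precisely when there are $V, W \in C_I^\omega(L^\infty(G, Z(A)''))$ with $b_{s,t} = \sum_{i\in I} W_i(t)^* V_i(s)$ almost everywhere, and the norm of $\mathcal{N}(F)$ as a Schur multiplier equals $\|V\|\,\|W\|$. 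Combining the two isometric statements gives $\|F\|_\HS = \|V\|\,\|W\|$ and the equivalence of (ii) and (iii).

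Next I would handle (i)$\iff$(iii). The point is that (iii) is a condition entirely internal to $Z(A)$ (and its weak closure $Z(A)''$): the family $(a_r)$ lies in $Z(\Mult(A))$, which contains $Z(Z(A)'')$, and restricting $F$ to act on $Z(A)$ we view it as a central Herz--Schur $(Z(A), G, \alpha)$-multiplier (note $\alpha$ restricts to an action on $Z(A)$ since each $\alpha_t$ is an automorphism). Since $Z(A)$ is itself a $C^*$-algebra with $\overline{Z(Z(A))\,Z(A)} = Z(A)$, applying the already-proved equivalence (ii)$\iff$(iii) to the system $(Z(A), G, \alpha)$ shows that $F$ is a central Herz--Schur $(Z(A),G,\alpha)$-multiplier iff the \emph{same} factorisation (iii) holds, with the same norm. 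Alternatively, one invokes Remark~\ref{r_suba} together with the equivalence (i)$\iff$(iii) of Corollary~\ref{th:centralmultsgeneral} for the multiplier $\mathcal{N}(F)$, whose values leave $Z(A)$ invariant. Either way, (i), (ii), (iii) all share the factorisation and its norm.

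The main obstacle I anticipate is bookkeeping around the measurability and the passage between $F$, $\mathcal{N}(F)$, and their restrictions to $Z(A)$: one must check that $\mathcal{N}(F)$ is pointwise measurable (which follows from pointwise measurability of $F$, as noted in the excerpt), that $\alpha$ indeed restricts to a point-norm continuous action on $Z(A)$ with $Z(A)$ still separable, and that the faithful-representation independence (used implicitly in Corollary~\ref{th:centralmultsgeneral}) causes no trouble when we pass from $A$ to $Z(A)$ to an arbitrary $B$ with $Z(A)\subseteq Z(B)$. None of these is deep, but they require care. The genuinely substantive input—the factorisation through $C_I^\omega$ and the norm identity—is entirely inherited from Corollary~\ref{th:centralmultsgeneral} via the isometric transference, so once the central structure of $\mathcal{N}(F)$ is recorded, the proof is short.
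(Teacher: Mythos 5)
Your proposal is correct and follows essentially the same route as the paper: the key computation $\mathcal{N}(F)(s,t)(a)=\alpha_{t^{-1}}(a_{ts^{-1}})a$ showing that $\mathcal{N}(F)$ is central, the isometric transference via $\mathcal{N}$ (\cite[Theorem 3.8]{mtt} / Lemma~\ref{le:Ncompleteisomhom}), and Corollary~\ref{th:centralmultsgeneral} applied to $\mathcal{N}(F)$ are exactly the ingredients the paper uses, with only a cosmetic difference in which pairs of equivalences are proved directly (the paper routes (i)$\Leftrightarrow$(iii) through the $Z(A)$-multiplier and strips the factor $a$ using non-degeneracy, rather than citing the $(Z(A),G,\alpha)$ case of (ii)$\Leftrightarrow$(iii)).
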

\begin{proof}
(i)$\implies$(ii) 
By \cite[Theorem 3.8]{mtt} $\mathcal{N}(F)$ is a Schur $Z(A)$-multiplier; it is clearly central.
Using the assumption $\overline{Z(A) A}= A$ we observe that $Z(A)$ acts non-degenerately on any Hilbert space where $A$ acts non-degenerately, so by Corollary~\ref{th:centralmultsgeneral} we have that $\mathcal{N}(F)$ is a central Schur $A$-multiplier. Applying again 
\cite[Theorem 3.8]{mtt}, we obtain that $F$ is a central Herz--Schur $(A, G,\alpha)$-multiplier.

(ii)$\implies$(i) Immediate from \cite[Theorem 3.8]{mtt} and Remark~\ref{r_suba}.

(i)$\implies$(iii) By \cite[Theorem 3.8]{mtt} $\mathcal{N}(F)$ is a central Schur $Z(A)$-multiplier, and for $a \in A$ and $s,t \in G$,
\[
	\mathcal{N}(F)(s,t)(a) = \alpha_{t\inv}(a_{ts\inv}) a, \quad a\in A.
\]
By Corollary~\ref{th:centralmultsgeneral}(ii), there exist $V,W \in C_I^\omega(L^\infty(G, Z(A)''))$ such that 
\[
	\alpha_{t\inv}(a_{ts\inv}) a = \sum_{i \in I} W_i(t)^* a V_i(s) = \sum_{i \in I} W_i(t)^* V_i(s) a  \quad \text{almost everywhere.}
\]
Since this holds for every $a \in A$ and $A \subseteq \Bd$ is separable and non-degenerate, 
we conclude that 
$$\alpha_{t\inv}(a_{ts\inv}) = \sum_{i \in I} W_i(t)^*V_i(s),$$ 
for almost all $(s,t) \in G \times G$.

(iii)$\implies$(i) For $a \in A$ and almost all $s,t \in G$ we have
\[
	\mathcal{N}(F)(s,t)(a) = \alpha_{t\inv}(a_{ts\inv}) a = \sum_{i \in I} W_i(t)^* a V_i(s) = \mathcal{W}(t)^* \rho(a) \mathcal{V}(s) ,
\]
where $\rho(a) := \id_{\ell^2(I)} \otimes a$, $\mathcal{V}(s) \defeq (V_i(s))_{i \in I}$ and $\mathcal{W}(t) \defeq (W_i(t))_{i \in I}$.
Therefore $F$ is a Herz--Schur $(Z(A),G,\alpha)$-multiplier by \cite[Theorem 3.8]{mtt}.

Since $\mathcal{N}$ is an isometry, the norm equality follows from the norm equality in Theorem~\ref{th:centralmultsgeneral}.
\end{proof}

A central Herz--Schur $(C_0(Z) , G ,\alpha)$-multiplier $F : G \to \cbm(C_0(Z))$,
associated with a family $(a_r)_{r \in G} \subseteq C_b(Z)$, 
may be identified with a bounded measurable function, continuous in the $Z$-variable, given by 
\[
	F : G \times Z \to \CC;\ F(r,z) = a_r(z) , \quad r \in G,\ z \in Z;
\]
conversely, 
if $F : G \times Z \to \CC$ is a bounded measurable function, continuous in the $Z$-variable, then the associated function $F : G \to \cbm(C_0(Z))$ is bounded and pointwise-measurable.
In the sequel, if $Z$ is a locally compact Hausdorff space and 
$(C_0(Z) , G , \alpha)$ is a $C^*$-dynamical system, we let
$(z,t)\to zt$ be the mapping from $Z\times G$ into $Z$ that satisfies the condition 
$f(zt) = \alpha_t(f)(z)$, $z\in Z$, $t\in G$. The mapping is jointly continuous and satisfies $z(st)=(zs)t$ for all $z\in Z$ and $s,t\in G$.

\begin{corollary}\label{co:centHSchar}
Let $(C_0(Z) , G , \alpha)$ be a $C^*$-dynamical system, and $F : G \times Z \to \CC$ a bounded measurable function, continuous in the $Z$-variable. 
The following are equivalent: 
\begin{enumerate}[i.]
    \item $F$ is a  central Herz--Schur $(C_0(Z) , G , \alpha)$-multiplier;
    \item there exists a Hilbert space $\HHH$ and weakly measurable bounded functions $v,w : G \times Z\to \HHH$ such that 
 	\[
 		F(ts\inv , zt^{-1}) = \ip{v(s,z)}{w(t,z)} \quad \text{almost all $(s,t,z) \in G\times G\times Z$.}
 	\]
\end{enumerate}
Moreover, 
$\| F \|_\HS = \esssup_{(s,x) \in G \times Z}\| v(s,x) \| \esssup_{(t,y) \in G \times Z} \| w(t,y) \|$.
\end{corollary}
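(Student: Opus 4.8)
The plan is to deduce this corollary from Proposition~\ref{pr:restrictcentralHS} together with the characterisation of operator $\mathcal{S}_1$-multipliers in Theorem~\ref{th_dia0}, by re-interpreting the condition on $\alpha_{t\inv}(a_{ts\inv})$ in the $C_0(Z)$ case as a condition on a scalar function of three variables. Recall from the paragraph preceding the statement that a central Herz--Schur $(C_0(Z),G,\alpha)$-multiplier $F$ is identified with the bounded measurable function $F : G \times Z \to \CC$, $F(r,z) = a_r(z)$, continuous in the $Z$-variable, and that $C_0(Z) = Z(C_0(Z))$ acts non-degenerately, so the hypothesis $\overline{Z(A)A} = A$ of Proposition~\ref{pr:restrictcentralHS} holds trivially. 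Identifying $C_0(Z)'' = L^\infty(Z,m)$ for the fixed measure $m$ with full support, condition (iii) of Proposition~\ref{pr:restrictcentralHS} says that $F$ is a central Herz--Schur multiplier if and only if the $L^\infty(Z,m)$-valued function $(s,t) \mapsto \alpha_{t\inv}(a_{ts\inv})$ lies in $C_I^\omega(L^\infty(G,L^\infty(Z,m)))$-type factorisation form. Unwinding the action, $\alpha_{t\inv}(a_{ts\inv})$ evaluated at $z \in Z$ equals $a_{ts\inv}(zt\inv) = F(ts\inv, zt\inv)$; thus the relevant scalar function on $G \times G \times Z$ is $\psi(s,t,z) := F(ts\inv, zt\inv)$.

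The key step is to recognise $\psi$ as (marginally equivalent to) an operator $\mathcal{S}_1$-multiplier on $G \times G \times Z$ in the sense of \cite{CleMS}. Indeed, by Corollary~\ref{th:centralmultsgeneral} (applied with $X = Y = G$, and the commutative algebra $C_0(Z)$ in the role of $A = Z(A)$), the factorisation $\alpha_{t\inv}(a_{ts\inv}) = \sum_{i\in I} W_i(t)^* V_i(s)$ with $V,W \in C_I^\omega(L^\infty(G, L^\infty(Z,m)))$ is exactly the statement that $\psi$ is a central Schur $C_0(Z)$-multiplier, which by the equivalence (i)$\iff$(iv) of Theorem~\ref{th_dia} is equivalent to $\psi$ being an operator $\mathcal{S}_1$-multiplier. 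Then Theorem~\ref{th_dia0} gives a Hilbert space $\HHH$ and weakly measurable bounded functions $v_0 : G \times Z \to \HHH$ and $w_0 : G \times Z \to \HHH$ with $\psi(s,t,z) = \ip{v_0(s,z)}{w_0(t,z)}$ for almost all $(s,t,z)$, and with the norm equality $\|\psi\|_\Sch = \esssup \|v_0\| \cdot \esssup \|w_0\|$. Since $\mathcal{N}$ is a complete isometry (Lemma~\ref{le:Ncompleteisomhom}) and $\|\psi\|_\Sch = \|\mathcal{N}(F)\|_\Sch = \|F\|_\HS$, this yields both the factorisation $F(ts\inv, zt\inv) = \ip{v(s,z)}{w(t,z)}$ claimed in (ii) and the norm formula, with $v = v_0$, $w = w_0$. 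Conversely, given such $v,w$, reading the same chain backwards shows $\psi$ is an operator $\mathcal{S}_1$-multiplier, hence a central Schur $C_0(Z)$-multiplier, hence (by Proposition~\ref{pr:restrictcentralHS}(iii)$\implies$(ii)) $F$ is a central Herz--Schur $(C_0(Z),G,\alpha)$-multiplier.

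The main obstacle I anticipate is bookkeeping around the change of variables and the various notions of negligibility: one must check that the substitution $(s,t) \mapsto (ts\inv, t)$ on $G \times G$, and the further substitution $z \mapsto zt\inv$ fibrewise, send null sets to null sets and are compatible with the marginal-null framework, so that ``$\psi$ is a central Schur $C_0(Z)$-multiplier'' really is the literal meaning of Proposition~\ref{pr:restrictcentralHS}(iii). Here the fact that $(z,t)\mapsto zt$ is jointly continuous with $z(st) = (zs)t$, together with the quasi-invariance/invariance of Haar measure under translation and the full support of $m$, is what makes the translation of indices harmless. Once this identification is in place, the rest is a direct concatenation of Proposition~\ref{pr:restrictcentralHS}, Theorem~\ref{th_dia}, Theorem~\ref{th_dia0}, and Lemma~\ref{le:Ncompleteisomhom}, with no further analytic input required.
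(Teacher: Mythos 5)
Your argument is correct and follows essentially the paper's route: the paper likewise deduces the corollary from Proposition~\ref{pr:restrictcentralHS}, simply taking $\HHH:=\ell^2(I)$ and $v(s,x)_i:=\big(V_i(s)\big)(x)$, $w(t,y)_i:=\big(W_i(t)\big)(y)$ in condition (iii) of that proposition, which is the same factorisation you recover via the detour through Theorem~\ref{th_dia}(iv) and Theorem~\ref{th_dia0}. The change-of-variables worry at the end is moot, since the conclusion of the corollary is stated directly in terms of $\psi(s,t,z)=F(ts\inv,zt\inv)=\mathcal{N}(F)(s,t)(z)$ and no substitution back to a function on $G\times Z$ is ever required.
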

\begin{proof}
Immediate from Proposition~\ref{pr:restrictcentralHS} by taking $\HHH := \ell^2(I)$, 
\[
	v(s,x)_i := \big( V_i(s) \big)(x) \quad \text{ and } \quad w(t,y)_i := \big( W_i(t) \big)(y) , \quad s,t \in G,\ x,y \in Z .
\]
\end{proof}

\subsection{Positive central multipliers}
\label{ssec:poscentral}
Positive Schur $A$-multipliers, 
in the case of sets equipped with the counting measure, 
were studied in \cite{mstt} (see \cite[Definition 2.3]{mstt} and \cite[Theorem 2.6]{mstt}).
Here we extend this by considering arbitrary standard measure spaces and identifying 
corresponding versions of the previous results.

\begin{definition}\label{de:posmeasurableSchurmult}
Let $A$ be a $C^*$-algebra.
A Schur $A$-multiplier $\varphi : X \times X \to {\rm CB}(A)$ is called \emph{positive} if $S_\varphi$ is completely positive.
\end{definition}

Before giving a completely positive version of Theorem \ref{th_dia}, we include a lemma. 
Since $L^{\infty}(X)\otimes^{w^* h} L^{\infty}(X) = (L^1(X)\otimes^h L^1(X))^*$, every Schur multiplier
$\nph$ on $X\times X$ gives rise to a canonical bilinear map $F_{\nph} : L^1(X)\times L^1(X)\to \mathbb{C}$.
As usual, we write $F_{\nph}^{(n,n)}$ for the corresponding amplification, a bilinear map from $M_n(L^1(X))\times M_n(L^1(X))$ 
into $M_n$.

\begin{lemma}\label{l_psm}
Let $(X,\mu)$ be a standard measure space and $\nph \in L^{\infty}(X)\otimes^{w^* h} L^{\infty}(X)$ 
be a positive Schur multiplier.
If $T = (f_{i,j})_{i,j=1}^n \in M_n(L^1(X))$ and $T^* = (\overline{f_{j,i}})_{i,j=1}^n$ then 
$F_{\nph}^{(n,n)}(T,T^*) \in M_n^+$.
\end{lemma}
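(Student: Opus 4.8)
\textbf{Proof plan for Lemma \ref{l_psm}.}

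The plan is to reduce the positivity of the scalar bilinear form $F_{\nph}^{(n,n)}(T,T^*)$ to the complete positivity of $S_{\nph}$ applied to a single rank-structured element built from $T$. First I would recall that, since $\nph$ is a positive Schur multiplier, $S_{\nph}$ is a completely positive map on $\cl K = \Cpt[L^2(X)]$, and hence $S_{\nph}^{(n)}$ is a positive map on $M_n(\cl K) = \Cpt[L^2(X)\otimes\CC^n]$. The idea is to encode the data $T = (f_{i,j})_{i,j=1}^n$ as a Hilbert--Schmidt kernel: for a fixed $g\in L^2(X)$ with $g\geq 0$ (or more carefully an approximate unit / normalised vector, to avoid integrability issues one takes $g$ bounded with compact support), consider the kernel $k_{i,j}(y,x) := f_{i,j}(y)\,\overline{g(x)}$ on $X\times X$, giving an element $T_k\in\Stwo(X)\otimes M_n$ of the form $T_k = \big(\,|f_{i,j}\rangle\langle g|\,\big)_{i,j}$ (a "column-type" operator). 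The matrix $(T_k^* T_k)$ and $(T_k T_k^*)$ then reproduce, after pairing against suitable vectors, the entries of $F_{\nph}^{(n,n)}(T,T^*)$.

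The key computation is then: for $\xi = (\xi_1,\dots,\xi_n)$ a finite tuple of scalars, one checks that
\[
\sum_{i,j} \overline{\xi_i}\,\xi_j\, \big(F_{\nph}^{(n,n)}(T,T^*)\big)_{i,j}
= \ip{S_{\nph}^{(n)}(P)\,(g\otimes\xi)}{g\otimes\xi},
\]
where $P\in M_n(\cl K)$ is the positive rank-one-type operator $P = (T_k)(T_k)^*$ or a closely related element whose $(i,j)$-entry is the operator $\eta\mapsto \langle \eta, h_j\rangle h_i$ for appropriate $h_i$ derived from $f_{i,j}$ and $g$; the point is that $P\geq 0$ because it is of the form $RR^*$, so $S_{\nph}^{(n)}(P)\geq 0$ by complete positivity, which forces the left-hand side to be non-negative for all $\xi$, i.e.\ $F_{\nph}^{(n,n)}(T,T^*)\in M_n^+$. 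To carry this out one uses the defining pairing $\langle S_{\nph}(T_k)\,\zeta_1,\zeta_2\rangle = \int_{X\times X}\nph(y,x)\,k(y,x)\,\zeta_1(x)\overline{\zeta_2(y)}\,dx\,dy$ (cf.\ \eqref{eq_3l}) together with the factorisation $\nph(x,y) = \sum_k a_k(x)b_k(y)$ from the scalar Schur multiplier theorem, or more directly the identification $L^\infty(X)\otimes^{w^*h}L^\infty(X) = (L^1(X)\otimes^h L^1(X))^*$ so that $F_{\nph}$ is literally the functional induced by $\nph$; one must only match the $M_n$-amplification conventions so that $F_{\nph}^{(n,n)}(T,T^*)$ equals the matrix of inner products above.

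The main obstacle I anticipate is bookkeeping rather than conceptual: making the Hilbert--Schmidt representative $k_{i,j}$ genuinely square-integrable while the $f_{i,j}$ only lie in $L^1(X)$. This is handled by first proving the inequality for $T$ with entries in $L^1\cap L^2$ (or in $C_c(X)$), which is a dense subset, and then passing to the limit: the map $T\mapsto F_{\nph}^{(n,n)}(T,T^*)$ is continuous on $M_n(L^1(X))$ (being a bounded bilinear form composed with the continuous involution), and $M_n^+$ is closed, so the conclusion persists. A second minor point is to verify that the auxiliary operator $P$ is honestly of the form $RR^*$ with $R$ a Hilbert--Schmidt (hence compact) operator so that complete positivity of $S_\nph$ on $\cl K$ applies; choosing $g\in C_c(X)$ makes each $|f_{i,j}\rangle\langle g|$ Hilbert--Schmidt and this is immediate. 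Once these integrability details are pinned down, the positivity is a one-line consequence of $S_{\nph}^{(n)}(RR^*)\geq 0$.
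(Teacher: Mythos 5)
Your overall strategy --- deducing $F_{\nph}^{(n,n)}(T,T^*)\in M_n^+$ from the complete positivity of $S_{\nph}$ evaluated at a positive element of $M_n(\Cpt[L^2(X)])$ --- can be made to work, but the key identity you build it on is false as stated, and that is a genuine gap. With $k_{i,j}(y,x)=f_{i,j}(y)\overline{g(x)}$ one has $T_{k_{i,j}}\zeta=\ip{\zeta}{g}f_{i,j}$, so $P=T_kT_k^*$ has $(i,j)$ entry with kernel $\|g\|_2^2\sum_l f_{i,l}(y)\overline{f_{j,l}(x)}$, and pairing $S_{\nph}^{(n)}(P)$ against $g\otimes\xi$ gives
\[
\|g\|_2^2\sum_{l}\int_X\int_X\nph(x,y)\,G_l(y)\,\overline{G_l(x)}\,g(x)\overline{g(y)}\,dx\,dy,
\qquad G_l:=\sum_i\overline{\xi_i}f_{i,l},
\]
whereas the quantity you must control is $\sum_{l}\int\int\nph(x,y)G_l(x)\overline{G_l(y)}\,dx\,dy$. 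For a general $g\in C_c(X)$ the weight $g(x)\overline{g(y)}$ does not cancel, so the right-hand side of your displayed identity is a different (still non-negative, but irrelevant) number; non-negativity of the weighted form does not yield non-negativity of the unweighted one. To repair this you must take $g=\chi_E$ with $E$ of finite measure containing the supports of all $f_{i,j}$ (after first reducing, via your density argument, to entries in $L^1\cap L^2$ supported on sets of finite measure), so that the weight is identically $1$ where it matters; you then still have to dispose of the transposition of variables between the two expressions, which requires the Hermitian symmetry $\nph(x,y)=\overline{\nph(y,x)}$ of a positive multiplier --- a small fact you would also need to record.

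For comparison, the paper avoids the operator-theoretic detour entirely: by Haagerup's characterisation a positive Schur multiplier factors as $\nph=\sum_{i=1}^{\infty}a_i\otimes\overline{a_i}$ with $(a_i)$ a bounded row over $L^{\infty}(X)$, so it suffices to treat $\nph=a\otimes\overline{a}$, and then $F_{\nph}^{(n,n)}(T,T^*)=\sum_{k=1}^n\bigl(\ip{f_{i,k}}{a}\,\overline{\ip{f_{j,k}}{a}}\bigr)_{i,j}$ is visibly a sum of positive rank-one matrices. This symmetric factorisation is exactly the structural content of positivity that you are smuggling in through $P=RR^*$; using it directly removes all the integrability and weighting issues above.
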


\begin{proof}
Note that, if $\nph$ is a positive Schur multiplier, by virtue of \cite{Ha80}, one may write
$\nph = \sum_{i=1}^{\infty} a_i \otimes \overline{a_i}$, where $(a_i)_{i=1}^{\infty}$ is a bounded row operator 
with entries in $L^{\infty}(X)$. 
It thus suffices to prove the statement in the case where $\nph = a\otimes \overline{a}$, for some $a\in L^{\infty}(X)$.
However, then we have
\[
    F_{\nph}^{(n,n)}(T,T^*) = \left(\sum_{k=1}^n \langle f_{i,k},a\rangle \langle \overline{f_{j,k}},\overline{a}\rangle\right)_{i,j=1}^n = \sum_{k=1}^n \left(\langle f_{i,k},a\rangle \overline{\langle f_{j,k},a\rangle}\right)_{i,j=1}^n ,
\]
and the conclusion follows. 
\end{proof}

\begin{theorem}\label{th_poscentral}
Let $\varphi : X \times X \times Z \to \CC$ be a bounded measurable function, continuous in the $Z$-variable. 
The following are equivalent: 
\begin{enumerate}[i.]
    \item $\varphi$ is a positive central Schur $C_0(Z)$-multiplier;
    \item there exists a Hilbert space $\HHH$ and an essentially bounded, weakly measurable function $v : X \times Z \to \HHH$ such that $\varphi(x , y ,z) = \ip{v(x,z)}{v(y,z)}$ for almost all $(x,y,z) \in X \times X\times Z$ ; 
    \item for each $z \in Z$ the function $\varphi_z$ is a positive Schur multiplier, and
    \[
        \sup_{z\in Z} \|\nph_z\|_\Sch < \infty .
    \]
\end{enumerate}
Moreover, if the space $X$ is discrete and $\mu$ is counting measure the above conditions are equivalent to:
\begin{enumerate}[i.] \setcounter{enumi}{3}
    \item for any $x_1 , \ldots , x_n \in X$ and $z \in Z$ the matrix $(\varphi(x_i , x_j , z))_{i,j}$ is positive in $M_n$. 
\end{enumerate}
\end{theorem}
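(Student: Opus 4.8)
The plan is to prove Theorem~\ref{th_poscentral} by combining the already-established Theorem~\ref{th_dia} with a positivity analysis that parallels the argument of Lemma~\ref{l_diagon} and Lemma~\ref{l_psm}. The cycle of implications I would establish is (i)$\implies$(iii)$\implies$(ii)$\implies$(i), with the discrete addendum (iii)$\iff$(iv) handled separately at the end.

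\medskip

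\noindent\textbf{(i)$\implies$(iii).} Suppose $\varphi$ is a positive central Schur $C_0(Z)$-multiplier. By Theorem~\ref{th_dia} (applied with $X=Y$) each $\varphi_z$ is a Schur multiplier and $\sup_{z\in Z}\|\varphi_z\|_\Sch = \|\varphi\|_\Sch < \infty$, so only positivity of each $\varphi_z$ needs checking. As in the proof of Theorem~\ref{th_dia}, after fixing a faithful representation $a\mapsto M_a$ of $C_0(Z)$ on $L^2(Z,m)$, the map $S_\varphi$ is identified with the map $D_\varphi$ on $C_0(Z,\cl K)$ acting fibrewise by $S_{\varphi_z}$. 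Then I would invoke the operator-system version of Lemma~\ref{l_diagon} (the implication (i')$\implies$(ii')): complete positivity of $D_\varphi$ forces each fibre map $S_{\varphi_z}$ to be completely positive, i.e.\ each $\varphi_z$ is a positive Schur multiplier. One subtlety here is that $\cl K$ is not an operator system, so rather than quoting Lemma~\ref{l_diagon} verbatim I would reproduce its short argument: pick $a\in C_0(Z)$ with $\|a\|=1$, $a(z)=1$, and use that $\Phi_z(T)=(\delta_z\otimes\id)(\Phi(a\otimes T))$, noting that $\delta_z\otimes\id$ and $T\mapsto a\otimes T$ are completely positive maps on the relevant $C^*$-algebras $\cl K\otimes C_0(Z)$, $M_n(\cl K\otimes C_0(Z))$.

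\medskip

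\noindent\textbf{(iii)$\implies$(ii).} For each fixed $z$, the positive Schur multiplier $\varphi_z$ admits, by the scalar Haagerup-type result used in Lemma~\ref{l_psm}, a representation $\varphi_z(x,y)=\ip{v_z(x)}{v_z(y)}$ for some weakly measurable $v_z:X\to\ell^2$ with $\esssup_x\|v_z(x)\|^2 = \|\varphi_z\|_\Sch \le C := \sup_z\|\varphi_z\|_\Sch$. The task is to assemble these fibrewise Kolmogorov decompositions into a single weakly measurable $v:X\times Z\to\HHH$. Here I would argue as in the implication (iv)$\implies$(i) of Theorem~\ref{th_dia} but keeping track of symmetry: by Theorem~\ref{th_dia}, (iii) gives that $\varphi$ is an operator $\Sone$-multiplier, hence by Theorem~\ref{th_dia0} there are weakly measurable $v,w:X\times Z\to\HHH$ with $\varphi(x,y,z)=\ip{v(x,z)}{w(y,z)}$ a.e. To upgrade this to $w=v$ a.e., I would use that $\varphi$ is \emph{positive}: consider the Schur multiplier $\psi_f$ on $X\times X$ obtained by integrating against $f\in L^1(Z)_+$ as in the proof of Theorem~\ref{th_dia}, observe it is a positive Schur multiplier for $f\ge 0$ (its completely positive operator $S_{\psi_f}$ arises from integrating the completely positive $S_{\varphi_z}$), and apply the standard symmetric Kolmogorov decomposition fibrewise together with a measurable-selection / direct-integral argument over $(Z,m)$ to produce $v$ with $\varphi(x,y,z)=\ip{v(x,z)}{v(y,z)}$ a.e.\ and $\esssup_{(x,z)}\|v(x,z)\|^2 \le C$. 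The essential boundedness is controlled by $\sup_z\|\varphi_z\|_\Sch$.

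\medskip

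\noindent\textbf{(ii)$\implies$(i) and the discrete case.} Given the symmetric decomposition $\varphi(x,y,z)=\ip{v(x,z)}{v(y,z)}$, I would run the construction in (iv)$\implies$(i) of Theorem~\ref{th_dia} with $\mathcal V=\mathcal W$: set $(\mathcal V(x)\xi)(z)=v(x,z)\xi(z)$ on $L^2(Z,m)$, so that $\mathcal V(x)^*(I\otimes M_a)\mathcal V(x)$-type computations show $S_\varphi$ has the form $S_\varphi = \mathcal V^*\rho(\cdot)\mathcal V$ with $\rho(a)=\id\otimes M_a$ a $*$-homomorphism; such a map is automatically completely positive, giving (i). Finally, for discrete $X$ with counting measure, (iii)$\iff$(iv) is the classical fact (as in \cite[Theorem 2.6]{mstt}): a scalar function on $X\times X$ is a positive Schur multiplier iff all its finite principal submatrices are positive semidefinite, and the uniform bound $\sup_z\|\varphi_z\|_\Sch<\infty$ is automatic from the boundedness of $\varphi$ since for a positive Schur multiplier $\|\varphi_z\|_\Sch = \sup_x \varphi(x,x,z) \le \|\varphi\|_\infty$; I would state this explicitly and note the two directions are then immediate from the equivalence (i)$\iff$(iii) together with this matricial reformulation applied fibrewise. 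I expect the main obstacle to be step (iii)$\implies$(ii): merging the fibrewise symmetric Kolmogorov decompositions into one globally (weakly) measurable $\HHH$-valued function, for which the cleanest route is to piggyback on the already-proven non-symmetric decomposition from Theorem~\ref{th_dia0} and then symmetrise using positivity, rather than attempting a direct measurable selection from scratch.
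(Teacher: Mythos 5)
Your overall architecture is sound, and three of your four steps match the paper: your (i)$\implies$(iii) is the paper's (i)$\iff$(iii) (the paper phrases it via the two facts that $S_\varphi(K)(z)=S_{\varphi_z}(K(z))$ and that $K\geq 0$ in $C_0(Z,\cl K)$ iff $K(z)\geq 0$ for all $z$, which is exactly the fibrewise argument you describe); your (ii)$\implies$(i) is identical to the paper's; and your handling of the discrete addendum is a legitimate variant of the paper's (which instead invokes \cite[Theorem 2.6]{mstt} and the positivity of Schur products of positive matrices over a commutative $C^*$-algebra).

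The gap is in (iii)$\implies$(ii), and you have correctly located it but not closed it. Starting from the non-symmetric decomposition $\varphi(x,y,z)=\ip{v(x,z)}{w(y,z)}$ of Theorem~\ref{th_dia0} and knowing that each $\varphi_z$ is a positive Schur multiplier, you propose to ``symmetrise'' by taking fibrewise Kolmogorov decompositions $\varphi_z(x,y)=\ip{v_z(x)}{v_z(y)}$ and then invoking ``a measurable-selection / direct-integral argument over $(Z,m)$.'' This is precisely the non-trivial content of the implication: the Hilbert space and the vectors produced by the fibrewise construction depend on $z$, and there is no canonical choice, so joint weak measurability of $(x,z)\mapsto v(x,z)$ does not follow from anything you have written; one would need to set up a measurable field of Hilbert spaces and a measurable trivialisation, none of which is routine or sketched. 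The paper avoids this entirely: it proves (i)$\implies$(ii) by passing to the completely bounded bilinear map $\Phi_\varphi : L^1(X)\times L^1(X)\to L^\infty(Z)$, using Lemma~\ref{l_psm} to verify that its amplifications satisfy $\Phi_{\varphi_z}^{(n,n)}\bigl((f_{i,j}),(f_{i,j}^*)\bigr)\in M_n^+$, and then applying the structure theorem \cite[Theorem 4.4, Remark 4.5(iii)]{ss} to obtain a single global family $(\psi_i)_{i\in\Lambda}\subseteq \cbm(L^1(X),L^\infty(Z))$ with $\Phi_\varphi((a,b))=\sum_i\psi_i(a)\psi_i(b^*)^*$; identifying each $\psi_i$ with an element of $L^\infty(X\times Z)$ then hands you the measurable $v(x,z)=(\psi_i(x,z))_{i\in\Lambda}$ in one step. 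If you want to keep your cycle, you should replace the symmetrisation step by this (or an equivalent) global decomposition theorem; as written, the key measurability assertion is asserted rather than proved.
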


\begin{proof}
(i)$\implies$(ii) 
Suppose that $\varphi$ is a positive central Schur $C_0(Z)$-multipli\-er. 
We have seen in the proof of Theorem~\ref{th_dia} that $\varphi \in L^\infty(Z) \btens (L^\infty(X)\otimes^{w^*h} L^\infty(X))$. 
With $\varphi$ we associate the completely bounded bilinear 
map $\Phi_\varphi : L^1(X)\times L^1(X) \to L^\infty(Z)$ given by 
\[
    \Phi_\varphi((f,g))(h) = \ip{\varphi}{h \otimes (f\otimes g)} , \quad f,g \in L^1(X),\ h \in L^1(Z) .
\]
 We obtain
\begin{equation}\label{eq:SphiactingonHaageruptp}
\begin{split}
    \Phi_\varphi((f,g))(h) &= \iiint_{X \times X \times Z} \varphi(x,y,z) h(z)f(x)g(y) dx\, dy\, dz \\
    &= \int_Z\left(\int_{X\times X}\varphi_z(x,y)f(x)g(y)dxdy\right)h(z)dz
\end{split}
\end{equation}
and  
$$\Phi_\varphi((f,g))(z) = \int_{X\times X}\varphi_z(x,y)f(x)g(y)dxdy \ \  \text{ a.e.}.$$
Set 
$$\Phi_{\varphi_z}((f,g)) = \Phi_\varphi((f,g))(z), \ \ z\in Z.$$ 
By Lemma \ref{l_diagon}, $\varphi_z$ is a positive Schur multiplier
and, by Lemma \ref{l_psm}, 
$\Phi_{\varphi_z}^{(n,n)}(((f_{i,j}), (f_{i,j}^*)))\in M_n^+$ for any $(f_{i,j})\in M_n(L^1(X))$.
By \cite[Theorem 4.4, Remark 4.5(iii)]{ss}, there exists a family 
$(\psi_i)_{i \in \Lambda} \subseteq \cbm(L^1(X) , L^\infty(Z))$ such that
$\|\sum_{i\in I}|\psi_i(a)|^2\|_\infty\leq C\|a\|_1^2$, $a\in L^1(X)$, for some constant $C>0$, and 
$$\Phi_\varphi ((a,b)) = \sum_{i \in \Lambda} \psi_i(a) \psi_i(b^*)^*, \ \ \ a,b \in L^1(X).$$
Identifying each $\psi_i$ with an element $\psi_i$ of $L^\infty(X \times Z)$ \textit{via}
\[
	\psi_i(f)(h) = \int_X \int_Z \psi_i(x,z) f(x) h(z) dx \, dz , \quad f \in L^1(X),\ h\in L^1(Z) ,
\]
letting $\HHH = \ell^2(\Lambda)$ and $v(x,z) := (\psi_i(x,z))_{i \in \Lambda}$ gives (ii).

(ii)$\implies$(i)  Define
\[
	\mathcal{V}(x) : L^2(Z) \to \HHH \otimes L^2(Z) ;\ \big( \mathcal{V}(x) \xi \big) (z) := v(x,z) \xi(z) , \quad \xi \in L^2(Z) .
\]
Then
\[
    \varphi(x,y)(a) = \mathcal{V}(y)^* \big( \id \otimes M_a \big) \mathcal{V}(x) , \quad a \in C_0(Z)  
\]
for almost all $(x,y)$ (see the proof of Theorem \ref{th_dia} (iv)$\iff$(i)).
Therefore $\varphi$ is a central Schur $C_0(Z)$-multiplier, and (as in the proof of \cite[Theorem 2.6]{mtt}) writing $\rho$ for the representation $a\mapsto \id \otimes M_a$ of $C_0(Z)$ on $\HHH \otimes L^2(Z)$ we have
\[
	S_\varphi(T) = \mathcal{V}^* (\id \otimes \rho)(T) \mathcal{V} , \quad T \in \Cpt[L^2(X)] \otimes C_0(Z) .
\] 
Hence $S_\varphi$ is completely positive.

(i)$\iff$(iii) follows from the following two facts:
(a) since $\varphi$ is a Schur $C_0(Z)$-multiplier, we have that 
$S_\varphi(K)(z)=S_{\varphi_z}(K(z))$, $z\in Z$ for any $K\in C_0(Z,\cl K)$, and 
(b) an element $K\in C_0(Z,\cl K)$ is positive if and only if 
$K(z)\geq 0$ as an operator in $\cl K$ for all $z\in Z$.  

Now assume that $\mu$ is counting measure on the discrete space $X$.
Observe that (iv) is equivalent to $(\varphi(x_i , x_j))$ being a positive element of $M_n(C_0(Z))$.

(i)$\implies$(iv) Let $x_1 , \ldots , x_n \in X$. 
By \cite[Theorem 2.6]{mstt}, 
the matrix $(\varphi(x_i , x_j)(a)) \in M_n(C_0(Z))$ is positive when $a \in C_0(Z)$ is positive. 
For a fixed $z_0\in Z$, let $a\in C_0(Z)$ be such that $a(z_0) = 1$.
It follows that $(\varphi(x_i , x_j, z_0))_{i,j} \in M_n^+$.

(iv)$\implies$(i) For a positive $(a_{i,j}) \in M_n(C_0(Z))$, the matrix $(\varphi(x_i , x_j)( a_{i,j}))$ is the Schur product of $(\varphi(x_i , x_j))$ and $(a_{i,j})$ in $M_n(C_0(Z))$. Since (iv) ensures the positivity of $(\varphi(x_i , x_j))$, and the Schur product of two positive matrices over a commutative $C^*$-algebra is positive, (i) follows from \cite[Theorem 2.6]{mstt}.
%
\end{proof}

In the next corollary we assume $A$ acts nondegenerately on a separable Hilbert space $\HH$.

\begin{corollary}\label{co:poscentgeneralcase}
Let $\varphi : X \times X \to Z(\Mult(A))\subseteq \cbm(A)$ be a pointwise measurable function, and assume that $\overline{ Z(A) A} = A$. 
The following are equivalent: 
\begin{enumerate}[i.]
    \item $\varphi$ is a positive central Schur $A$-multiplier;
    \item there exist an index set $I$ and $V \in C_I^\omega( L^\infty(X,Z(A)''))$ such that 
    \[
    	\varphi(x,y) = \sum_{i\in I} V_i(y)^* V_i(x), \quad \text{ for almost all $(x,y) \in X\times Y$.}
    \]
\end{enumerate}
Moreover, if $\varphi : X \times X \to Z(\Mult(A))$ is weakly measurable then the above conditions are equivalent to:
\begin{enumerate}[i.] \setcounter{enumi}{2}
    \item $\varphi$ is a positive central Schur $B$-multiplier for any $C^*$-algebra $B \subseteq \Bd$ with $Z(A) \subseteq Z(B)$.
\end{enumerate}
\end{corollary}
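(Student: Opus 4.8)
The plan is to mirror the proof of Corollary~\ref{th:centralmultsgeneral}, reducing everything to the commutative case treated in Theorem~\ref{th_poscentral}. Since $\overline{Z(A)A}=A$, the commutative $C^*$-algebra $Z(A)$ acts non-degenerately on $\HH$, so $Z(A)''=\overline{Z(A)}^{\,w}$; fixing a faithful state on $Z(A)$ we identify $Z(A)\cong C_0(Z)$ for a second-countable locally compact Hausdorff space $Z$ and $Z(A)''\cong L^\infty(Z,m)$, exactly as in the proof of Corollary~\ref{th:centralmultsgeneral}. Under this identification a central Schur $Z(A)$-multiplier becomes, as in the paragraph preceding Theorem~\ref{th_dia}, a bounded measurable function $\varphi:X\times X\times Z\to\CC$ continuous in the $Z$-variable, and it is \emph{positive} precisely when the associated map $S_\varphi$ is completely positive.

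First I would prove (i)$\implies$(ii). Writing $\varphi(x,y)(a)=a_{x,y}a$ with $a_{x,y}\in Z(\Mult(A))$, one checks that $\varphi(x,y)$ leaves $Z(A)$ invariant (if $a\in Z(A)$ then $a_{x,y}a\in A$ and commutes with every $b\in A$), and likewise that $S_\varphi$ leaves the $C^*$-subalgebra $\Cpt[L^2(X)]\otimes Z(A)$ of $\Cpt[L^2(X)]\otimes A$ invariant, since it does so on the dense subspace $\Stwo(X,X;Z(A))$. By Remark~\ref{r_suba} the restriction $\varphi_{Z(A)}$ is a Schur $Z(A)$-multiplier, and $S_{\varphi_{Z(A)}}$ is the restriction of the completely positive map $S_\varphi$ to that invariant $C^*$-subalgebra, hence is completely positive. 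Thus $\varphi_{Z(A)}$, read as a function on $X\times X\times Z$, is a positive central Schur $C_0(Z)$-multiplier, so the implication (i)$\implies$(ii) of Theorem~\ref{th_poscentral} yields a Hilbert space $\HHH$ and an essentially bounded, weakly measurable $v:X\times Z\to\HHH$ with $\varphi(x,y,z)=\ip{v(x,z)}{v(y,z)}$ for almost all $(x,y,z)$. Choosing an orthonormal basis $(e_i)_{i\in I}$ of $\HHH$ and letting $V_i(x)$ be the class of $z\mapsto\ip{v(x,z)}{e_i}$ in $L^\infty(Z,m)\cong Z(A)''$, the column estimate $\bigl\|\sum_{i\in J}V_i(x)^*V_i(x)\bigr\|\le\esssup_z\|v(x,z)\|^2$ gives $V=(V_i)_{i\in I}\in C_I^\omega(L^\infty(X,Z(A)''))$, while $\sum_{i\in I}V_i(y)^*V_i(x)=\varphi(x,y)$ in $Z(A)''$; this is (ii).

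For (ii)$\implies$(i), note that $A$ commutes with $Z(A)$, so $Z(A)''=(Z(A)')'\subseteq A'$, whence for $a\in A$
\[
\varphi(x,y)(a)=\sum_{i\in I}V_i(y)^*V_i(x)a=\sum_{i\in I}V_i(y)^*aV_i(x)=\mathcal{V}(y)^*\rho(a)\mathcal{V}(x),
\]
where $\mathcal{V}(x):=(V_i(x))_{i\in I}\colon\HH\to\ell^2(I)\otimes\HH$ is bounded by the $C_I^\omega$-condition and $\rho(a):=\id_{\ell^2(I)}\otimes a$. By Theorem~\ref{th:Schurmultsmtt}, $\varphi$ is a Schur $A$-multiplier, and it is plainly central; arguing as in the proof of (ii)$\implies$(i) of Theorem~\ref{th_poscentral} one gets $S_\varphi(T)=\widetilde{\mathcal{V}}^*(\id_{\Cpt[L^2(X)]}\otimes\rho)(T)\widetilde{\mathcal{V}}$ on $\Cpt[L^2(X)]\otimes A$ for a single $*$-representation $\rho$ of $A$ and a single bounded operator $\widetilde{\mathcal{V}}$, so $S_\varphi$ is completely positive. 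For the last part, assume $\varphi$ weakly measurable; if $Z(A)\subseteq Z(B)$ then $B$ commutes with $Z(A)$, so again $Z(A)''\subseteq B'$, the identity above reads $\varphi(x,y)(b)=\mathcal{V}(y)^*(\id_{\ell^2(I)}\otimes b)\mathcal{V}(x)$ for $b\in B$ (in particular $(x,y)\mapsto\varphi(x,y)(b)$ is weakly measurable, so it makes sense to speak of a Schur $B$-multiplier), and the argument of (ii)$\implies$(i) shows $\varphi$ is a positive central Schur $B$-multiplier; (iii)$\implies$(i) is the trivial choice $B=A$.

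The only genuinely new point, beyond what is already available in Corollary~\ref{th:centralmultsgeneral} and Theorem~\ref{th_poscentral}, is the transfer of positivity between $A$ and its centre: downward it rests on $\Cpt[L^2(X)]\otimes Z(A)$ being an $S_\varphi$-invariant $C^*$-subalgebra of $\Cpt[L^2(X)]\otimes A$, so that complete positivity survives restriction; upward it rests on the Stinespring-type factorisation $S_\varphi=\widetilde{\mathcal{V}}^*(\id\otimes\rho)(\cdot)\widetilde{\mathcal{V}}$ forced by the \emph{single} family $V=(V_i)$ in the decomposition $\varphi(x,y)=\sum_iV_i(y)^*V_i(x)$. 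The remaining verifications — invariance of the subalgebra, weak measurability of $x\mapsto V_i(x)$, and the column bounds — are routine and parallel those in Corollary~\ref{th:centralmultsgeneral}.
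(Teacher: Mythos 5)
Your proof is correct and follows exactly the route the paper intends: the paper's own proof is the single line ``Follows from Theorem~\ref{th_poscentral} in the same way as Corollary~\ref{th:centralmultsgeneral} follows from Theorem~\ref{th_dia}'', and your argument is a faithful (and more detailed) execution of that reduction, including the two points that genuinely need checking — that complete positivity survives restriction to the $S_\varphi$-invariant subalgebra $\Cpt[L^2(X)]\otimes Z(A)$, and that the single-family decomposition $\varphi(x,y)=\sum_i V_i(y)^*V_i(x)$ forces a Stinespring-type factorisation of $S_\varphi$ and hence complete positivity on all of $\Cpt[L^2(X)]\otimes A$.
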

\begin{proof}
Follows from Theorem~\ref{th_poscentral} in the same way as Corollary~\ref{th:centralmultsgeneral} follows from Theorem~\ref{th_dia}.
\end{proof}

We recall the following definition from \cite{mstt}.

\begin{definition}\label{de:posHS}
A Herz--Schur $(A, G,\alpha)$-multiplier $F:G\to \cbm(A)$ is called \emph{completely positive} if $S_{F}$ is completely positive on $A\rtimes_{\alpha, r} G$.
\end{definition}

\begin{theorem}\label{th:poscentraltransference}
Let $(A, G, \alpha)$ be a $C^*$-dynamical system such that $\overline{Z(A)A} = A$, 
and $F: G\to Z(\Mult(A))$ be a pointwise measurable function. 
The following are equivalent:
\begin{enumerate}[i.]
	\item $F$ is a completely positive central Herz--Schur $(Z(A), G,\alpha)$-multiplier;
	\item $F$ is a completely positive central Herz--Schur $(A, G,\alpha)$-multiplier;
	\item $\mathcal{N}(F)$ is a positive central Schur $Z(A)$-multiplier;
	\item $\mathcal{N}(F)$ is a positive central Schur $A$-multiplier.
\end{enumerate}
\end{theorem}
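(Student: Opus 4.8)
The plan is to run the four equivalences through the transference machinery developed in Theorem~\ref{th:transferencemtt} (and its refinement in Lemma~\ref{le:Ncompleteisomhom}) together with the completely positive statements already proved for central Schur multipliers, namely Corollary~\ref{co:poscentgeneralcase}. The key observation is that the map $\mathcal{N}$ not only relates Herz--Schur and Schur multipliers isometrically but, by the explicit form of the dilations used in \cite[Theorem 3.8]{mtt}, carries complete positivity in one direction to complete positivity in the other. Concretely, if $F$ is a completely positive Herz--Schur $(B,G,\alpha)$-multiplier (for $B = A$ or $B = Z(A)$), then $S_F = W^*\rho(\cdot)V$ with $V = W$ can be arranged, since a completely positive map has a symmetric Stinespring-type representation; feeding this through the construction of $\mathcal{V}, \mathcal{W}$ in the proof of Lemma~\ref{le:Ncompleteisomhom} yields $\mathcal{V} = \mathcal{W}$, and hence $S_{\mathcal{N}(F)}$ has the form $\mathcal{V}^*\rho_A(\cdot)\mathcal{V}$, i.e.\ it is completely positive. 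The converse direction is easier: $S_F$ is a (corner) restriction of $S_{\mathcal{N}(F)}$ to $A\rtimes_{\alpha,r}G \hookrightarrow \mathcal{B}(L^2(G,\HH))$, and a restriction of a completely positive map to a $C^*$-subalgebra is completely positive.

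First I would prove (i)$\iff$(iii) and (ii)$\iff$(iv), each by the argument just sketched: the forward implication uses the symmetric dilation of a completely positive Herz--Schur multiplier together with the formulas $\mathcal{V}(s) = \rho_G(s^{-1})V\lambda_s$, $\mathcal{W}(t) = \rho_G(t^{-1})W\lambda_t$ from the proof of Lemma~\ref{le:Ncompleteisomhom}, so that $V = W$ forces $\mathcal{V} = \mathcal{W}$ and $S_{\mathcal{N}(F)} = \mathcal{V}^*(\rho_A(\cdot)\otimes\id)\mathcal{V}$ is completely positive; the reverse implication uses that $S_F$ is the compression of $S_{\mathcal{N}(F)}$ to the relevant crossed product. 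Next I would prove (iii)$\iff$(iv): since $\overline{Z(A)A} = A$, the algebra $Z(A)$ acts nondegenerately wherever $A$ does, and $\mathcal{N}(F)$ takes values in $Z(\Mult(A))$ and acts by central multiplication, so Corollary~\ref{co:poscentgeneralcase} (with $B = A$, $Z(A)\subseteq Z(A) = Z(A)$, noting $Z(A)\subseteq Z(B)$ holds for $B=A$ since then $Z(B)=Z(A)$) gives directly that $\mathcal{N}(F)$ is a positive central Schur $Z(A)$-multiplier if and only if it is a positive central Schur $A$-multiplier. Chaining these equivalences closes the cycle (i)$\iff$(ii)$\iff$(iii)$\iff$(iv).

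An alternative, perhaps cleaner, organisation is: prove (i)$\iff$(iii) and (ii)$\iff$(iv) via transference-with-positivity as above, then deduce (iii)$\iff$(iv) from Corollary~\ref{co:poscentgeneralcase}; this is the route I would write up, since it isolates the new content (positivity passes through $\mathcal{N}$) from the already-established central-multiplier dictionary. For the pointwise-measurability hypothesis needed to even speak of $\mathcal{N}(F)$ as a Schur $A$-multiplier: $\mathcal{N}(F)(s,t)(a) = \alpha_{t^{-1}}(a_{ts^{-1}})a$, and since $F$ is pointwise measurable and $\alpha$ is point-norm continuous, so is $\mathcal{N}(F)$, exactly as remarked after the definition of $\mathcal{N}$.

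The main obstacle I anticipate is verifying carefully that the dilation of a completely positive Herz--Schur multiplier can genuinely be taken in the symmetric form $S_F = V^*\rho(\cdot)V$ \emph{with the same $V$ on both sides}, and that this symmetry survives the passage to $\mathcal{V}, \mathcal{W}$ and then to the Schur-multiplier symbol $\mathcal{N}(F)(s,t)(a) = \mathcal{W}(t)^*\rho_A(a)\mathcal{V}(s)$ — i.e.\ that one really lands in the hypothesis of Corollary~\ref{co:poscentgeneralcase}(ii) rather than merely Corollary~\ref{th:centralmultsgeneral}(ii). This is a matter of tracking the Stinespring/Paulsen dilation through the proof of \cite[Theorem 3.8]{mtt} and checking it respects the $*$-structure; once that bookkeeping is done, everything else is a formal consequence of results already in the paper.
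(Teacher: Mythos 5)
Your proposal is correct and follows essentially the same route as the paper: the forward transference uses the symmetric Stinespring dilation $S_F = V^*\rho(\cdot)V$ fed through $\mathcal{V}(s)=\rho_G(s^{-1})V\lambda_s$ to get $S_{\mathcal{N}(F)}=\mathcal{V}^*(\rho_A\otimes\id)(\cdot)\mathcal{V}$, the reverse uses that $S_F$ is the restriction of $S_{\mathcal{N}(F)}$, and (iii)$\iff$(iv) comes from the symmetric representation $\sum_i V_i(t)^*V_i(s)$ with $V_i$ valued in $Z(A)''$ commuting past $a$. The only cosmetic difference is that the paper obtains (i)$\iff$(iii) as a special case of (ii)$\iff$(iv) rather than proving it separately, and re-derives the (iii)$\implies$(iv) step from \cite[Remark 4.5(iii)]{ss} instead of citing Corollary~\ref{co:poscentgeneralcase} directly.
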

\begin{proof}
%
(ii)$\implies$(iv) Assume that $F: G\to Z(\Mult(A))$ is a positive central Herz--Schur $(A, G,\alpha)$-multiplier. 
By the proof of \cite[Theorem 3.8]{mtt}, 
using the Stinespring dilation theorem in place of the Haagerup--Paulsen--Wittstock theorem, 
we have $S_F(T) = V^* \rho(T) V$, $T\in A \rtimes_{\alpha,r} G$. 
The representation $\rho \circ (\pi \rtimes \lambda)$ of the full crossed product $A \rtimes_\alpha G$ 
has the form $\rho_A \rtimes \rho_G$, where $(\rho_A,\rho_G)$ is a covariant pair.
Let $\mathcal{V}(s) := \rho_{G}(s^{-1})V\lambda_s$; 
as in \cite[page 408]{mtt}, we have 
$\mathcal{N}(F)(s,t)(a) = \mathcal{V}(t)^* \rho_A(a) \mathcal{V}(s)$, 
so $S_{\mathcal{N}(F)} = \mathcal{V}^*(\rho_A \otimes \id)(\cdot) \mathcal{V}$ is completely positive.
Therefore $\mathcal{N}(F)$ is a positive Schur $A$-multiplier, and it is clearly central.

(iv)$\implies$(ii) 
As in the proof of \cite[Theorem 3.8]{mtt} we have that 
$S_{F} = S_{\mathcal{N}(F)}|_{A\rtimes_{\alpha, r} G}$, so $S_F$ is completely positive.

(iv)$\implies$(iii) Follows from Remark~\ref{r_suba}.

(iii)$\implies$(iv) Let $\mathcal{N}(F)$ be a positive central Schur $Z(A)$-multiplier. 
Following the proof of the implication (i)$\implies$(ii) of 
Corollary~\ref{th:centralmultsgeneral} and applying \cite[Remark 4.5(iii)]{ss}, 
we see that 
there exists an index set $I$ and an essentially bounded function $V \in C_I^\omega(L^\infty(G , Z(A)''))$ 
such that $\mathcal{N}(F)(s,t) = \sum_{i \in I} V_i(t)^* V_i(s)$ almost everywhere on $G\times G$ (the series converges weakly). 
Hence for $a\in A$ and $s,t \in G$ we have 
\[
	\mathcal{N}(F)(s,t)(a) = \sum_{i\in I} V_i(t)^* V_i(s) a = \sum_{i\in I} V_i(t)^* a V_i(s) = \mathcal{V}(t)^* \rho(a) \mathcal{V}(s) ,
\]
where $\mathcal{V}(r) := (V_i(r))_{i \in I}$ and $\rho(a) = \id \otimes a$.
As in the proof of the implication (ii)$\implies$(i) of \cite[Theorem 2.6]{mtt}, 
it follows that $S_{\mathcal{N}(F)} = \mathcal{V}^* (\id \otimes \rho)(\cdot) \mathcal{V}$ is completely positive, so $\mathcal{N}(F)$ is a positive central Schur $A$-multiplier.

(i)$\iff$(iii) This is a special case of (ii)$\iff$(iv).
%
\end{proof}

Using Theorem \ref{th_poscentral}, similarly to Corollary \ref{co:centHSchar}, one can obtain the following 
description of completely positive central Herz--Schur $(C_0(Z) , G , \alpha)$-multipliers. 

\begin{corollary}\label{co:poscentHSchar}
Let $(C_0(Z) , G , \alpha)$ be a $C^*$-dynamical system, and $F : G \times Z \to \CC$ a measurable function, continuous in the $Z$-variable. 
The following are equivalent:
\begin{enumerate}[i.]
    \item $F$ is a completely positive central Herz--Schur $(C_0(Z) , G , \alpha)$-multiplier;
    \item there exists a Hilbert space $\HHH$ and a weakly measurable function $v : G \times Z\to \HHH$  such that  $F(ts\inv ,xt\inv) = \ip{v(s,x)}{v(t,x)}$ almost everywhere on $G\times G\times Z$.
\end{enumerate}
\end{corollary}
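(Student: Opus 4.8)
The plan is to deduce Corollary~\ref{co:poscentHSchar} from Theorem~\ref{th_poscentral} by the same transference mechanism used to derive Corollary~\ref{co:centHSchar} from Theorem~\ref{th_dia}, now tracking complete positivity throughout. First I would apply Theorem~\ref{th:poscentraltransference} with $A = C_0(Z)$: since $C_0(Z)$ is abelian we have $Z(A) = A$ and trivially $\overline{Z(A)A} = A$, so $F$ is a completely positive central Herz--Schur $(C_0(Z), G, \alpha)$-multiplier if and only if $\mathcal N(F)$ is a positive central Schur $C_0(Z)$-multiplier. Recall from the discussion preceding Corollary~\ref{co:centHSchar} that, under the identification of central $C_0(Z)$-multipliers with scalar functions, $\mathcal N(F)$ corresponds to the function $(s,t,z) \mapsto \alpha_{t^{-1}}(a_{ts^{-1}})(z) = a_{ts^{-1}}(zt^{-1}) = F(ts^{-1}, zt^{-1})$, using the convention $f(zt) = \alpha_t(f)(z)$ recorded in the paragraph before Corollary~\ref{co:centHSchar}; one should also check that $\mathcal N(F)$, viewed as a function on $(G\times G)\times Z$, is measurable and continuous in the $Z$-variable, which is immediate from joint continuity of the action and pointwise measurability of $F$.

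Next I would feed $\mathcal N(F)$ into Theorem~\ref{th_poscentral}, applied with the measure space $X$ replaced by $G$ (with left Haar measure, a standard measure space by the second-countability assumption) in both slots. The equivalence (i)$\iff$(ii) there says $\mathcal N(F)$ is a positive central Schur $C_0(Z)$-multiplier precisely when there are a Hilbert space $\HHH$ and an essentially bounded, weakly measurable $v : G \times Z \to \HHH$ with
\[
    \mathcal N(F)(s,t,z) = \ip{v(s,z)}{v(t,z)} \quad \text{for almost all } (s,t,z) \in G \times G \times Z.
\]
Substituting the explicit form $\mathcal N(F)(s,t,z) = F(ts^{-1}, zt^{-1})$ gives exactly condition (ii) of the corollary, $F(ts^{-1}, xt^{-1}) = \ip{v(s,x)}{v(t,x)}$ a.e.\ on $G \times G \times Z$. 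Conversely, any such $v$ produces, through the same identity, a kernel of the required positive-definite form, so $\mathcal N(F)$ is a positive central Schur $C_0(Z)$-multiplier and hence, back through Theorem~\ref{th:poscentraltransference}, $F$ is a completely positive central Herz--Schur $(C_0(Z), G, \alpha)$-multiplier.

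The one point requiring a little care --- and the main (mild) obstacle --- is the bookkeeping of the change of variables. The substitution $(s,t) \mapsto (s, t)$ with the kernel evaluated at $(ts^{-1}, zt^{-1})$ must be checked to be a measure-class-preserving transformation of $G \times G \times Z$, so that "almost everywhere" is preserved in both directions; this follows from the fact that $(s,t)\mapsto (ts^{-1},t)$ is a Borel automorphism of $G\times G$ whose Jacobian (with respect to Haar measure) is locally bounded away from $0$ and $\infty$, together with joint continuity of $(z,t)\mapsto zt$ in the $Z$-coordinate (so that null sets in $z$ pull back to null sets). Everything else is a direct quotation of Theorem~\ref{th_poscentral} and Theorem~\ref{th:poscentraltransference}, exactly parallel to the proof of Corollary~\ref{co:centHSchar}; in particular no norm statement is claimed here, so one need not track the constant $\esssup \|v\|$, though it would of course come out equal to $\|F\|_{\HS}^{1/2}$ squared by the moreover-clause of Theorem~\ref{th_dia0}.
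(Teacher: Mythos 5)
Your proposal is correct and follows essentially the route the paper intends: the paper derives this corollary by combining Theorem~\ref{th:poscentraltransference} (to pass between completely positive central Herz--Schur multipliers and positive central Schur $C_0(Z)$-multipliers via $\mathcal N(F)$) with Theorem~\ref{th_poscentral} applied to $\mathcal N(F)(s,t,z)=F(ts^{-1},zt^{-1})$, exactly as you do in parallel with Corollary~\ref{co:centHSchar}. The measurability/continuity bookkeeping you flag is the only detail the paper leaves implicit, and your handling of it is fine.
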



%
\subsection{Connections with other types of multipliers}
\label{ssec:connectionscentral}

Let $Z$ be a locally compact Hausdorff space, equipped with an action of a locally compact group $G$;
thus, we are given a map 
$Z\times G\to Z$, $(x,s)\to x s$, jointly continuous and such that $x (st) = (xs) t$ for all $x\in Z$ and all $s,t\in G$.
We consider the crossed product $C_0(Z)\rtimes_{\alpha,r}G$, 
where $\alpha$ is the corresponding action of $G$ on $C_0(Z)$.
The set $\cl G = Z\times G$ is a groupoid, where the set $\cl G^2$ of composable pairs is given by 
$$\cl G^2 = \{[(x_1,t_1),(x_2,t_2)]: x_2 = x_1t_1\},$$
and if $[(x_1,t_1),(x_2,t_2)] \in \cl G^2$, the product
$(x_1,t_1)\cdot (x_2,t_2)$ is defined to be $(x_1,t_1t_2)$,
while the inverse $(x,t)^{-1}$ of $(x,t)$ is defined to be $(xt, t^{-1})$.
The domain and range maps are given by
\[
    d((x,t)) := (x,t)^{-1}\cdot(x,t)=(xt,e), \quad r((x,t)) := (x,t)\cdot(x,t)^{-1}=(x,e).
\]
The unit space $\cl G_0$ of the groupoid, which is by definition equal to the common image of the maps $d$ and $r$, can therefore be canonically identified with $X$.
We refer to \cite{Ren80} for background on groupoids (see also \cite[Section 5.2]{mtt}). 

Let $\psi : Z \times G \to \CC$ be a bounded continuous function.
Let $F_\psi(s)\in CB(C_0(Z))$ be given by $F_\psi(s)(f)(x) := \psi(x,s)f(x)$, $f\in C_0(Z)$, $s\in G$. 
In \cite[Section 5]{mtt} it was shown that such a function $\psi$ is a Herz--Schur $(C_0(Z) , G , \alpha)$-multiplier if and only if $\psi$ is a completely bounded multiplier of the Fourier algebra of $\cl{G}$ in the sense of Renault~\cite{Ren97}.
In the terminology of this paper such functions $\psi$ are central Herz--Schur $(C_0(Z) , G , \alpha)$-multipliers.
The following is therefore immediate from \cite[Proposition 5.3]{mtt} and Corollary~\ref{co:centHSchar}.

\begin{corollary}\label{co:centralmultscharMTT}
Let 
$(C_0(Z) , G , \alpha)$ be a $C^*$-dynamical system, and write $\cl{G}$ for the underlying groupoid. 
Let $\psi : Z \times G \to \CC$ be a bounded continuous function and write $F_\psi (r)(f)(x) := \psi(x,r)f(x)$, $f\in C_0(Z)$.
The following are equivalent:
\begin{enumerate}[i.]
    \item $F_\psi$ is a central Herz--Schur $(C_0(Z) , G , \alpha)$-multiplier;
    \item $\psi$ is a completely bounded multiplier of the Fourier algebra of $\cl{G}$;
    \item there exist a Hilbert space $\HHH$ and essentially bounded functions $v , w : G \times Z \to \HHH$ such that
    \[
        \psi(x t\inv , ts\inv) = \ip{v(s,x)}{w(t,x)} , \quad s,t \in G ,\ \text{almost all $x \in X$} .
    \]
\end{enumerate}
If the conditions hold then we can choose $v$ and $w$ such that  
\[
	\| \psi \|_\HS = \esssup_{(s,x) \in G \times Z} \| v(s,x) \| \esssup_{(t,x) \in G \times Z} \| w(t,x) \| .
\]
\end{corollary}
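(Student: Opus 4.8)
The plan is to deduce Corollary~\ref{co:centralmultscharMTT} essentially for free from the machinery already in place, with the only genuine work being to reconcile the variable conventions between the two sources. First I would record that, by the discussion immediately preceding the statement (namely \cite[Section 5]{mtt}), a bounded continuous $\psi : Z\times G\to\CC$ satisfies $F_\psi\in\cbm(C_0(Z))$ for each fixed argument, $F_\psi$ is pointwise-measurable and bounded, and $F_\psi$ is exactly a central Herz--Schur $(C_0(Z),G,\alpha)$-multiplier whenever it is a Herz--Schur multiplier at all (since $F_\psi(s)$ acts by multiplication by $\psi(\cdot,s)\in C_b(Z)$, which lies in $Z(\Mult(C_0(Z)))=C_b(Z)$). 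Thus (i) is just the assertion that $F_\psi$ is a Herz--Schur multiplier of the system, and the equivalence (i)$\iff$(ii) is precisely \cite[Proposition 5.3]{mtt}, which identifies Herz--Schur multipliers of $(C_0(Z),G,\alpha)$ with completely bounded multipliers of the Fourier algebra of the transformation groupoid $\cl G=Z\times G$.

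Next I would obtain (i)$\iff$(iii) by invoking Corollary~\ref{co:centHSchar} applied to the function $F=\psi$ regarded as a bounded measurable function on $G\times Z$, continuous in the $Z$-variable (continuity in $Z$ being part of the hypothesis). Corollary~\ref{co:centHSchar} states that $F$ is a central Herz--Schur $(C_0(Z),G,\alpha)$-multiplier if and only if there are a Hilbert space $\HHH$ and essentially bounded weakly measurable $v,w:G\times Z\to\HHH$ with $F(ts\inv, zt\inv)=\ip{v(s,z)}{w(t,z)}$ for almost all $(s,t,z)$. The remaining task is purely notational: the action $(z,t)\mapsto zt$ used in Section~\ref{ssec:centralHerzSchur} is the one satisfying $f(zt)=\alpha_t(f)(z)$, whereas the groupoid action in Section~\ref{ssec:connectionscentral} is written $(x,s)\mapsto xs$; one checks these are the same action, so that $zt\inv$ and $xt\inv$ refer to the same point. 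Making the substitution $z\leftrightarrow x$, the factorisation $F(ts\inv,zt\inv)=\ip{v(s,z)}{w(t,z)}$ becomes exactly $\psi(xt\inv,ts\inv)=\ip{v(s,x)}{w(t,x)}$ for $s,t\in G$ and almost all $x\in X$, which is (iii). The norm statement transfers verbatim from the "Moreover" clause of Corollary~\ref{co:centHSchar}, since $\|F_\psi\|_\HS$ is by definition the Herz--Schur norm, unchanged under the relabelling.

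The one point requiring a line of care is the measurability/continuity bookkeeping: Corollary~\ref{co:centHSchar} is phrased for bounded \emph{measurable} $F$ continuous in $Z$, while here $\psi$ is assumed jointly continuous (hence certainly in that class), so the hypotheses are met; conversely the $v,w$ produced are only guaranteed essentially bounded and weakly measurable, which is precisely what (iii) claims, so no upgrade to continuity is needed or asserted. I do not anticipate any real obstacle — the corollary is a translation exercise — but if anything is delicate it is confirming that the two a priori different-looking group actions on $Z$ genuinely coincide, so that the almost-everywhere factorisation identities line up; this is a short computation with the defining relation $f(zt)=\alpha_t(f)(z)$ and the fact that $\alpha$ is the action inducing the groupoid $\cl G$.

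\begin{proof}
By the discussion preceding the statement, $F_\psi$ is a bounded, pointwise-measurable function $G\to\cbm(C_0(Z))$ whose values act by multiplication by $\psi(\cdot,s)\in C_b(Z)=Z(\Mult(C_0(Z)))$; hence $F_\psi$ is a central Herz--Schur $(C_0(Z),G,\alpha)$-multiplier precisely when it is a Herz--Schur $(C_0(Z),G,\alpha)$-multiplier. The equivalence of (i) and (ii) is therefore exactly \cite[Proposition 5.3]{mtt}. For the equivalence of (i) and (iii), view $\psi$ as a bounded measurable function on $G\times Z$, continuous in the $Z$-variable, and apply Corollary~\ref{co:centHSchar}: $F_\psi$ is a central Herz--Schur $(C_0(Z),G,\alpha)$-multiplier if and only if there exist a Hilbert space $\HHH$ and essentially bounded, weakly measurable $v,w:G\times Z\to\HHH$ with
\[
    \psi(ts\inv\cdot z,\, zt\inv) \;=\; \ip{v(s,z)}{w(t,z)}
\]
for almost all $(s,t,z)$, where $z\mapsto zt$ is the action determined by $f(zt)=\alpha_t(f)(z)$; this action coincides with the groupoid action $(x,s)\mapsto xs$ of Section~\ref{ssec:connectionscentral}. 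Relabelling $z$ as $x$ and rearranging gives $\psi(xt\inv,ts\inv)=\ip{v(s,x)}{w(t,x)}$ for $s,t\in G$ and almost all $x\in X$, which is (iii); the norm identity follows from the corresponding one in Corollary~\ref{co:centHSchar} together with $\|F_\psi\|_\HS=\|\psi\|_\HS$.
\end{proof}
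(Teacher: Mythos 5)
Your proof is correct and takes exactly the route the paper does: the paper's own ``proof'' is the single remark that the corollary is immediate from \cite[Proposition 5.3]{mtt} (giving (i)$\iff$(ii)) together with Corollary~\ref{co:centHSchar} (giving (i)$\iff$(iii) after swapping the order of the variables), which is precisely your argument with the bookkeeping written out. One small slip: the displayed identity inside your proof should read $\psi(zt\inv, ts\inv) = \ip{v(s,z)}{w(t,z)}$ --- that is, $F(ts\inv, zt\inv)$ with $F(r,z)=\psi(z,r)$ --- rather than $\psi(ts\inv\cdot z, zt\inv)$; your surrounding prose and the final relabelled formula are nevertheless correct.
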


We next link central multipliers to the multipliers studied by Dong--Ruan in \cite{DR12}.
Let $(A,G,\alpha)$ be a $C^*$-dynamical system with $A$ unital and $G$ discrete. Dong--Ruan define a function $h : G \to A$ to be a \emph{multiplier with respect to $\alpha$} if there is an $A$-bimodule map $\Phi$ on $A \rtimes_{\alpha ,r} G$ such that $\Phi(\lambda_r) = \lambda_r \pi(h(r))$. 
The $A$-bimodule requirement forces $h(r) \in Z(A)$ for all $r \in G$. Hence $\Phi=S_F$ for the central $(A,G,\alpha)$ multiplier given by $F(r)(a)=h(r)a$.

In \cite[Section 6]{DR12}, the authors use the fact 
that classical (positive) Schur multipliers on a discrete group $G$ give rise to (positive) central Herz--Schur multipliers of $(\ell^\infty(G) , G , \beta)$ (here $\beta$ denotes the left translation action). This connection is also 
utilised by Ozawa~\cite{Oz00}.
We formalise this connection in the next proposition. 

\begin{proposition}\label{pr:centralmultsareSchur}
Let $G$ be a discrete group. 
Consider a function $\varphi : G \times G \to \CC$ and a family $a = (a_r)_{r \in G} \subseteq C_b(G)$.
Define
\[
    a^\varphi_r(p) := \varphi(r\inv p\inv , p\inv) \quad \text{ and } \quad \varphi_a(s,t) := a_{ts\inv}(t\inv) .
\]
The assignments $\varphi \mapsto a^\varphi$ and $a \mapsto \varphi_a$ are mutual inverses, and give a one-to-one correspondence between the classical Schur multipliers and the central Herz--Schur $(C_0(G) , G , \beta)$-multipliers.
This bijection is an isometric algebra isomorphism which preserves positivity. 
\end{proposition}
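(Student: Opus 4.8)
The plan is to establish the bijection first as a purely formal change of variables, and then to match up the two relevant subclasses using the characterisations of central Herz--Schur multipliers already obtained; everything takes place in the discrete category, so all the cited results are used in their discrete (counting-measure) form. Since $C_0(G)$ is abelian, $Z(\Mult(C_0(G))) = C_b(G)$, so by Definition~\ref{de:centralHSmult} the central Herz--Schur $(C_0(G),G,\beta)$-multipliers are exactly the maps $F_a : r \mapsto (f\mapsto a_r f)$ attached to norm-bounded families $a = (a_r)_{r\in G}\subseteq C_b(G)$; such families correspond bijectively, via $a\mapsto\varphi_a$, to bounded functions on $G\times G$. That $\varphi\mapsto a^\varphi$ and $a\mapsto\varphi_a$ are mutually inverse is a one-line substitution: taking $r = ts\inv$, $p = t\inv$ gives $r\inv p\inv = s$ and $p\inv = t$, whence $\varphi_{a^\varphi}(s,t) = a^\varphi_{ts\inv}(t\inv) = \varphi(s,t)$, and symmetrically $a^{\varphi_a} = a$.

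The heart of the matter is the identity, obtained directly from the definition of $\varphi_a$,
\[
	\varphi_a(sz\inv , tz\inv) = a_{(tz\inv)(sz\inv)\inv}\big((tz\inv)\inv\big) = a_{ts\inv}(zt\inv), \quad s,t,z\in G,
\]
together with the observation that the auxiliary variable $z$ enters only through the translation $g\mapsto gz\inv$ applied \emph{equally} to both arguments of $\varphi_a$ (here $zt\inv$ is computed via the $\beta$-action of $G$ on $G$). Using this, I would argue as follows. By Corollary~\ref{co:centHSchar}, in its discrete form, $F_a$ is a central Herz--Schur $(C_0(G),G,\beta)$-multiplier if and only if there are a Hilbert space $\HHH$ and bounded $v,w : G\times G\to\HHH$ with $a_{ts\inv}(zt\inv) = \ip{v(s,z)}{w(t,z)}$ for all $s,t,z$; by the identity above and the substitution $(s,t)\mapsto(sz,tz)$ this is equivalent to $\varphi_a(s,t) = \ip{v(sz,z)}{w(tz,z)}$ for all $s,t,z$. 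Setting $z = e$ exhibits $\varphi_a$ as a classical Schur multiplier, and conversely, from a factorisation $\varphi_a(s,t) = \ip{v_0(s)}{w_0(t)}$ the functions $v(s,z) := v_0(sz\inv)$, $w(t,z) := w_0(tz\inv)$ are admissible. Comparing the norm formula of Corollary~\ref{co:centHSchar} with the scalar-valued form of Theorem~\ref{th:Schurmultsmtt} then yields $\|F_a\|_\HS = \|\varphi_a\|_\Sch$. (Equivalently, one may compute $\mathcal N(F_a)(s,t) = M_{\beta_{t\inv}(a_{ts\inv})}$ using commutativity of $C_0(G)$, note that its $z$-slice is $\varphi_a(sz\inv,tz\inv)$, and invoke Lemma~\ref{le:Ncompleteisomhom} and Theorem~\ref{th_dia}.)

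It then remains to see that $a\mapsto\varphi_a$ is an isometric algebra isomorphism preserving positivity. Linearity is clear, and the product of the central multipliers $F_a,F_b$ in the algebra of Herz--Schur multipliers is $F_{ab}$ with $(ab)_r = a_r b_r$ (the operator product $M_{a_r}M_{b_r} = M_{a_rb_r}$), so $\varphi_{ab}(s,t) = a_{ts\inv}(t\inv)b_{ts\inv}(t\inv) = \varphi_a(s,t)\varphi_b(s,t)$, which is the Hadamard (pointwise) product, i.e.\ the product in the algebra of Schur multipliers; hence $a\mapsto\varphi_a$ is an algebra isomorphism. For positivity, one repeats the argument of the previous paragraph with Corollary~\ref{co:poscentHSchar} in place of Corollary~\ref{co:centHSchar}: $F_a$ is completely positive iff there is a bounded $v : G\times G\to\HHH$ with $a_{ts\inv}(zt\inv) = \ip{v(s,z)}{v(t,z)}$, which by the same substitution and $z = e$ is equivalent to $\varphi_a(s,t) = \ip{v_0(s)}{v_0(t)}$, i.e.\ to $\varphi_a$ being a positive Schur multiplier (the converse uses $v(s,z) := v_0(sz\inv)$); alternatively one invokes Theorem~\ref{th:poscentraltransference} and Theorem~\ref{th_poscentral}.

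The main obstacle is not any single hard estimate but keeping the bookkeeping straight: the formula defining $\varphi_a$ is precisely engineered so that $a_{ts\inv}(zt\inv)$ is a translate of $\varphi_a$, and it is the ``diagonal'' nature of the dependence on the auxiliary variable --- the same translation in both coordinates --- that allows the Schur norm and, more delicately, positive-definiteness of the kernel to descend intact; unequal translations in the two coordinates would destroy positivity. One must also be mindful that the whole argument lives in the discrete setting, so that the cited results apply without their separability hypotheses.
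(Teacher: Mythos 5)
Your proof is correct and follows essentially the same route as the paper's: both directions reduce to matching the factorisations $a_{ts\inv}(zt\inv) = \ip{v(s,z)}{w(t,z)}$ supplied by Corollary~\ref{co:centHSchar} (resp.\ Corollary~\ref{co:poscentHSchar} for positivity) with scalar factorisations of $\varphi_a$, by restricting to $z=e$ in one direction and translating $v_0,w_0$ in the other, and the isometry and multiplicativity claims are handled identically. The only cosmetic difference is that you isolate the identity $\varphi_a(sz\inv,tz\inv)=a_{ts\inv}(zt\inv)$ up front, whereas the paper performs the same substitution inline.
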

\begin{proof}
It is easy to check that $\varphi_{a^\varphi} = \varphi$ and $a^{\varphi_a} = a$ and that these assignments are linear and multiplicative.

Now suppose that $a =(a_r)_{r \in G}$ is a central Herz--Schur $(C_0(G) , G , \beta)$-multiplier.
By Corollary~\ref{co:centHSchar}, there exists a Hilbert space $\mathcal{L}$ and weakly measurable 
functions $v,w : G\times G\to \mathcal{L}$, such that 
\[
    \varphi_a(s,t) = a_{ts\inv}(t\inv) = \ip{v(s,e)}{w(t,e)}, \quad s,t\in G.
\] 
It follows from \cite{BoFe84} 
that $\varphi_a$ is a Schur multiplier and $\| \varphi_a \|_\Sch \leq \| a \|_\HS$.

Conversely, suppose $\varphi : G \times G \to \CC$ is a Schur multiplier, 
and take $v,w : G \to \HH$ are such that 
$\varphi(s,t) = \ip{v(s)}{w(t)}$ and $\| \varphi \|_\Sch$ $=$ $\sup_{s \in G}\| v(s) \|$ $\sup_{t \in G} \| w(t) \|$.
Then, for $s,t,x \in G$,
\[
    a^\varphi_{ts\inv}(x t\inv) = \varphi(s t\inv t x\inv , t x\inv) = \varphi(s x\inv , t x\inv) = \ip{v(s x\inv)}{w(t x\inv)} ,
\]
so, by Corollary~\ref{co:centHSchar}, 
$a^\varphi = (a^\varphi_r)_{r \in G}$ is a central Herz--Schur $(C_0(G) , G , \alpha)$-multiplier with 
$\| a^\varphi \|_\HS \leq \| \varphi \|_\Sch$.


If $a$ is a positive central multiplier (resp.\ $\varphi$ is a positive Schur multiplier) then applying Corollary~\ref{co:poscentHSchar}, taking $v = w$ in the above calculations, shows $\varphi_a$ (resp.\ $a^\varphi$) is also positive.
\end{proof}

\section{Convolution multipliers}
\label{sec:convmults}

In this section, we give a characterisation of Herz--Schur convolution multipliers first studied in \cite[Section 6]{mtt}.
We will use the notion of a Herz--Schur $\theta$-multiplier of a $C^*$-dynamical system $(A,G,\alpha)$,
introduced in \cite[Definition 3.3]{mtt}. 
Let $\theta : A \to \Bd[\HH_\theta]$ be a faithful representation of (the separable $C^*$-algebra) 
$A$ on the separable Hilbert space $\HH_\theta$, 
and let $(\pi^\theta , \lambda^\theta)$ be the regular covariant pair associated to this representation
(see Subsection~\ref{sss_cp}). 
A function $F : G \to \cbm(A)$ will be called a \emph{Herz--Schur $\theta$-multiplier of $(A,G,\alpha)$} if the map
\[
	\pi^\theta(a) \lambda^\theta_r \mapsto \pi^\theta\big( F(r)(a) \big) \lambda^\theta_r
\]
extends to a completely bounded, weak*-continuous map on $A \rtimes^{w^*}_{\alpha , \theta} G$. 
As before we assume that $G$ is either second countable or discrete.

\subsection{Abelian case}
\label{ssec:abelianconvmults}

Let $G$ be an abelian locally compact group  
equipped with a Haar measure and $\Gamma$ be its dual group. 
We denote by $\lambda^{\Gamma}$ the left regular representation on $L^2(\Gamma)$. 
We shall identify each element $s\in G$ with a character on $\Gamma$, and use $\beta$ to denote the natural action of $G$ on $C_r^*(\Gamma)$ by letting
\[
    \beta_s \big( \lambda^\Gamma(f) \big) := \lambda^\Gamma(sf) , \quad  s\in G,\ f \in L^1(\Gamma);
\]
thus, $(C^*_r(\Gamma), G,\beta)$ is a $C^*$-dynamical system. 

Given a bounded measurable function $\psi : G\times\Gamma \to \CC$ and $t\in G$ (resp.\ $x\in \Gamma$), let the function $\psi_t : \Gamma \to \CC$ (resp.\ $\psi^x : G \to \CC$) be given by $\psi_t(y) := \psi(t,y)$ (resp. $\psi^x(s) := \psi(s,x)$).
We call $\psi$ \emph{admissible} if $\psi_t\in B(\Gamma)$ for every $t\in G$ and $\sup_t \|\psi_t\|_{B(\Gamma)} < \infty$.
Assuming that $\psi$ is admissible, let $F_\psi(t) : C_r^*(\Gamma)\to C_r^*(\Gamma)$ be the map given by
\[
    F_\psi(t)(\lambda^\Gamma(g)) = \lambda^\Gamma(\psi_t g), \quad g\in L^1(\Gamma) .
\]
We define the \emph{Herz--Schur convolution multipliers of $G$} to be the elements of the set
\[
\begin{split}
    \HSconv(G) := \{ \psi : G\times \Gamma \to \CC &\ : \psi \text{ is admissible and }  F_\psi \text{ is }\\
        & \text{ a Herz--Schur $(C^*_r(\Gamma),G,\beta)$-multiplier} \},
 \end{split}
\]
and write
\[
\begin{split}
    \HSconv^\id (G) = \{\psi : G\times \Gamma \to \CC &\ : \psi \text{ is admissible and } F_\psi \text{ is }\\
    	& \text{ a Herz--Schur $\id$-multiplier of $(C^*_r(\Gamma),G,\beta)$} \}.
\end{split}
\]
Here we write $\id$ for the canonical representation of $C^*_r(\Gamma)$ on $L^2(\Gamma)$.
Clearly, the space $\HSconv(G)$ is an algebra with respect to the operations of pointwise addition and multiplication, and $\HSconv^\id(G)$ is a subalgebra of $\HSconv(G)$.
For $\psi\in \HSconv(G)$, let $\|\psi\|_\HS = \|F_{\psi}\|_\HS$, and use $S_{\psi}$ to denote the map $S_{F_{\psi}}$.

We identify an elementary tensor $u\otimes h$, where $u\in B(G)$ and $h\in B(\Gamma)$, with the function $(s,x)\to u(s)h(x)$, $s\in G$, $x\in \Gamma$.
Let $\mathfrak{F}(B(G),B(\Gamma))$ be the complex vector space of all separately continuous functions $\psi : G\times \Gamma \to \CC$ such that, for every $s\in G$ (resp.\ $x\in \Gamma$), the function $\psi_s : \Gamma \to \CC$ (resp.\ $\psi^x : G \to \CC$) belongs to $B(\Gamma)$ (resp.\ $B(G)$).
By \cite[Section 6]{mtt}, we have the following inclusions: 
\[
    B(G)\odot B(\Gamma)\subseteq \HSconv^\id(G) \subseteq \mathfrak{F}(B(G),B(\Gamma)) .
\]
We now answer \cite[Question 6.6]{mtt} by identifying $\HSconv^\id(G)$.

\begin{theorem}\label{conv_ab}
Let $G$ be a locally compact abelian group 

and $\psi : G \times \Gamma \to \CC$ be an admissible function.
The following are equivalent:
\begin{enumerate}[i.]
	\item $\psi \in \HSconv^\id(G)$;
	\item $\psi \in B(G\times\Gamma)$.
\end{enumerate}
The identification is an isometric algebra homomorphism.
\end{theorem}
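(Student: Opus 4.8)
The plan is to prove the two implications separately, exploiting the transference machinery (Theorem~\ref{th:transferencemtt} and Lemma~\ref{le:Ncompleteisomhom}) to convert the statement into one about Schur multipliers on $G\times G$, and then to bring in the duality between $G$ and $\Gamma$ to pass between $B(G\times\Gamma)$ and a function on $G\times G$. The direction (ii)$\implies$(i) should be the easier one: given $\psi\in B(G\times\Gamma)$, write $\psi$ via an Eymard-type representation as $\psi(s,x)=\langle\pi(s,x)\xi,\eta\rangle$ for a unitary representation $\pi$ of $G\times\Gamma$ on some Hilbert space $\cl L$ and vectors $\xi,\eta\in\cl L$; since $G\times\Gamma$ is abelian, $\pi$ disintegrates, but more usefully $\pi$ restricted to the two commuting subgroups gives commuting unitary representations $\pi_G$ of $G$ and $\pi_\Gamma$ of $\Gamma$. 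Using $\pi_\Gamma$ one builds, for each fixed $t\in G$, the Herz--Schur (indeed Fourier--Stieltjes) data exhibiting $\psi_t\in B(\Gamma)$ with uniformly bounded norm (admissibility), and then the joint representation produces the ``$\cl V$, $\cl W$'' functions of the form required by Theorem~\ref{th:Schurmultsmtt} applied to $\cl N(F_\psi)$, showing $\cl N(F_\psi)$ is a Schur $C^*_r(\Gamma)$-multiplier and hence, by Theorem~\ref{th:transferencemtt} (in the $\theta=\id$ form, i.e. the weak\textsuperscript{*}-continuous version one gets from \cite[Definition 3.3]{mtt}), that $F_\psi$ is a Herz--Schur $\id$-multiplier, with $\|\psi\|_\HS\le\|\psi\|_{B(G\times\Gamma)}$.

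For (i)$\implies$(ii), I would start from $\psi\in\HSconv^\id(G)$ and unwind the definition of a Herz--Schur $\id$-multiplier of $(C^*_r(\Gamma),G,\beta)$: the map $\lambda^\Gamma(g)\lambda^\beta_t\mapsto\lambda^\Gamma(\psi_t g)\lambda^\beta_t$ extends to a completely bounded weak\textsuperscript{*}-continuous map on $C^*_r(\Gamma)\rtimes^{w^*}_{\beta,\id}G$. The crucial structural observation is that the crossed product $C^*_r(\Gamma)\rtimes_{\beta,r}G$, where $\beta$ is translation by the characters $s\in G\subseteq\widehat\Gamma$ on $C^*_r(\Gamma)\cong C_0(\widehat\Gamma)$... — in fact, since $G$ acts on $\Gamma$ through the dual pairing, $C^*_r(\Gamma)\rtimes_{\beta,r}G$ is (via Takai-type duality / the Fourier transform on $\Gamma$) isomorphic to a copy of $C^*_r(G\times\Gamma)$ twisted appropriately, and the weak\textsuperscript{*} closure is $\vN(G)\btens\vN(\Gamma)\cong\vN(G\times\Gamma)$ — more precisely, the regular representation realises $C^*_r(\Gamma)\rtimes_{\beta,\id}G$ acting on $L^2(\Gamma)$ with $\lambda^\Gamma(g)\lambda^\beta_t$ corresponding to a rank-one-generated subalgebra, and the Fourier transform $L^2(\Gamma)\cong L^2(G)$ turns $\lambda^\Gamma(g)$ into a multiplication operator $M_{\hat g}$ and $\lambda^\beta_t$ into the translation $\lambda^G_t$, so the weak\textsuperscript{*}-closed crossed product becomes $L^\infty(G)\rtimes G = \cl B(L^2(G))$ — and under this identification the map $S_{F_\psi}$ becomes a weak\textsuperscript{*}-continuous completely bounded $\cl D_G$-bimodule-compatible map whose symbol is exactly a Toeplitz-type function built from $\psi$. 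Chasing this through, $\psi\in\HSconv^\id(G)$ forces the associated function on $G\times G$ to be a Schur multiplier of Toeplitz type, which by Bo\.zejko--Fendler \cite{BoFe84} means $\psi(\cdot,\cdot)$, viewed after the dual substitution, lies in $\Mcb A(G\times\Gamma)$; combining with admissibility and separate continuity one upgrades this from a completely bounded multiplier to an element of $B(G\times\Gamma)$, with the reverse norm inequality, by invoking the fact (Cowling--Haagerup / the nuclearity or amenability of the abelian group $G\times\Gamma$, under which $\Mcb A = B$ isometrically).

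The main obstacle I anticipate is making the identification of the weak\textsuperscript{*}-closed crossed product $C^*_r(\Gamma)\rtimes^{w^*}_{\beta,\id}G$ with $\cl B(L^2(G))$ (equivalently $L^\infty(G)\rtimes G$) fully precise, and tracking exactly what the ``symbol'' of $S_{F_\psi}$ is under this identification — i.e. verifying that the Toeplitz-type function that appears is genuinely $\psi$ composed with the group operations in the two variables, and not some twisted variant, so that the Bo\.zejko--Fendler correspondence applies cleanly in both directions with matching norms. A secondary technical point is the passage from ``completely bounded multiplier of $A(G\times\Gamma)$'' to ``element of $B(G\times\Gamma)$'': this uses amenability of the abelian group $G\times\Gamma$, so that $\Mcb A(G\times\Gamma)=B(G\times\Gamma)$ isometrically, together with the earlier established admissibility of $\psi$ to guarantee the relevant boundedness is uniform in the right variable. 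Once these identifications are in place, the final claim that the correspondence $\psi\mapsto\psi$ is an isometric algebra homomorphism is immediate, since both the transference isometry of Theorem~\ref{th:transferencemtt}/Lemma~\ref{le:Ncompleteisomhom} and the Bo\.zejko--Fendler isometry are multiplicative for the pointwise product, and the composite just reshuffles variables.
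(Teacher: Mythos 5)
Your direction (ii)$\implies$(i) is essentially the paper's argument: the paper also starts from a Bo\.{z}ejko--Fendler/Gilbert representation $\psi(ts^{-1},xy^{-1})=\ip{v(s,y)}{w(t,x)}$ of $\psi\in B(G\times\Gamma)$, except that instead of routing through the transference theorem it builds the weak*-continuous extension explicitly as $S(T)=\sum_i M_{w_i}TM_{v_i}$ on $\Bd[L^2(G\times\Gamma)]$ and verifies by direct computation that $S$ restricts to $\pi^\id(\lambda^\Gamma_x)\lambda^\id_s\mapsto\psi(s,x)\pi^\id(\lambda^\Gamma_x)\lambda^\id_s$. This choice is not cosmetic: a Herz--Schur $\id$-multiplier requires a \emph{weak*-continuous} completely bounded extension to $C^*_r(\Gamma)\rtimes^{w^*}_{\beta,\id}G$, and Theorem~\ref{th:transferencemtt} as stated only produces a multiplier of the norm-closed reduced crossed product; you would need the $\theta$-multiplier version of the transference from \cite{mtt} to close that gap, whereas the explicit normal map $S$ gives it for free.

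The direction (i)$\implies$(ii) is where your proposal has a genuine gap. After identifying $C^*_r(\Gamma)\rtimes^{w^*}_{\beta,\id}G$ with $\cl B(L^2(G))$, the map $S_{F_\psi}$ sends $M_{\hat g}\lambda^G_t$ to $\theta(\hat\psi_t)(M_{\hat g})\lambda^G_t$, i.e.\ it \emph{convolves} in the $L^\infty(G)$ leg and multiplies in the $\vN(G)$ leg. Such a map is not a $\mathcal{D}_G$-bimodule map and has no Schur-multiplier symbol on $G\times G$; it is a right multiplier of $L^1(G)\projtens A(G)$ in the sense of Section~\ref{ssec:convmultsgeneral}, and Bo\.{z}ejko--Fendler applied to the single group $G$ cannot produce an element of $B(G\times\Gamma)$ --- the relevant Toeplitz structure lives over the group $G\times\Gamma$, not over $G$. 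Making your route precise is essentially the content of Theorem~\ref{th:convmultsnonab} together with the identification $\Mcb^r(L^1(G)\projtens A(G))=B(\Gamma\times G)$, which is a substantially longer detour. The paper instead stays over $G\times\Gamma$ throughout: it applies the transference theorem to view $\cl N(F_\psi)$ as an operator-valued Schur $C^*_r(\Gamma)$-multiplier on $G\times G$, obtaining $\psi(ts^{-1},x)\lambda^\Gamma_x=\mathcal{W}(t)^*\rho(x)\mathcal{V}(s)$, and then conjugates by the unitaries $\rho(y)^*$ and $\lambda^\Gamma_y$ to manufacture functions $v,w$ on $G\times\Gamma$ with $\psi\big((t,x)(s,y)^{-1}\big)=\ip{v(s,y)}{w(t,x)}$; Gilbert's theorem for the group $G\times\Gamma$ then gives $\psi\in\Mcb A(G\times\Gamma)=B(G\times\Gamma)$. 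You would also need to address the upgrade from ``almost everywhere'' to ``everywhere'' (the paper uses Fubini and the separate continuity of $\psi$), which your sketch omits.
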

\begin{proof}
(i)$\implies$(ii) 
Let $\psi \in \HSconv^\id(G)$ and let $F_\psi : G \to \cbm(C^*_r(\Gamma))$ be the corresponding Herz-Schur multiplier of $(C^*_r(\Gamma),G,\beta)$. 
By \cite[Theorem 3.8]{mtt}, 
$\cl N(F_\psi)(s,t)$ is a Schur $C^*_r(\Gamma)$-multiplier and hence there exists a Hilbert space $\HH_\rho$, operators $\mathcal{V}, \mathcal{W} \in L^\infty(G,\Bd[L^2(\Gamma),\HH_\rho])$, a continuous 
unitary representation $\rho: \Gamma \to \Bd[\HH_\rho]$ 
and a subset $N\subseteq G\times G$ with $(m_G\times m_G)(N) = 0$,
such that
\[
    \cl N(F_\psi)(s,t) \big( \lambda^\Gamma(f) \big) = \mathcal{W}(t)^* \rho(f) \mathcal{V}(s), \quad f \in L^1(\Gamma),
\]
for all $(s,t) \not\in N$, and  
\begin{equation}\label{eq_noess}
    \|\psi\|_\Sch = \esssup_{s\in G} \| \mathcal{V}(s) \| \esssup_{t\in G} \| \mathcal{W}(t) \|.
\end{equation}
As 
\[
    \cl N(F_\psi)(s,t)(\lambda^\Gamma(f)) = \beta_{t^{-1}}(F_\psi(ts^{-1})(\beta_t(\lambda^\Gamma(f)))) = \lambda^\Gamma(\psi_{ts^{-1}}f),
\]
we obtain
\[
    \lambda^\Gamma(\psi_{ts^{-1}}f) = \mathcal{W}(t)^* \rho(f) \mathcal{V}(s), \quad f\in L^1(\Gamma) ,
\]
for all $(s,t)\notin N$.
As $\psi_{ts^{-1}}\in B(\Gamma)$, 
we have that $\psi_{ts^{-1}}$ 
is a completely bounded multiplier of $A(\Gamma)$, and the map $S_{\psi_{ts^{-1}}}$ can be extended to a 
weak*-continuous linear operator on $\vN(\Gamma)$; we have
\[
    \psi(ts^{-1},x)\lambda_x^\Gamma = \mathcal{W}(t)^* \rho(x) \mathcal{V}(s), \quad x \in \Gamma, \ (s,t)\notin N .
\]
Thus, for $\xi\in L^2(\Gamma)$ with $\|\xi\|_2=1$, we have 
\[
\begin{split}
    \psi(ts^{-1}, xy^{-1}) \ip{\xi}{\xi} 
    &= \ip{\lambda_{x^{-1}}^\Gamma \mathcal{W}(t)^* \rho(x) \rho(y)^* \mathcal{V}(s) \lambda_y^\Gamma\xi}{\xi} \\
        &= \ip{\rho(y)^* \mathcal{V}(s) \lambda_y^\Gamma \xi}{\rho(x)^* \mathcal{W}(t) \lambda_x^\Gamma \xi} .
\end{split}
\]
Letting 
$v(s,y) :=\rho(y)^* \mathcal{V}(s)\lambda_y^\Gamma\xi$ and 
$w(t,x) := \rho(x)^* \mathcal{W}(t)\lambda_{x}^\Gamma\xi$, we obtain
\[
    \psi \big((t,x)(s,y)^{-1} \big) = \ip{v(s,y)}{w(t,x)} , \ \ \ (s,t)\notin N.
\]
By \cite{BoFe84},
$\psi$ is equal almost everywhere to a completely bounded multiplier of $A(G\times \Gamma)$, 
and hence to an element $u\in B(G\times \Gamma)$ \cite[Theorem 5.1.8]{KaLa18}. 
To see that $\psi(t,x)=u(t,x)$ for all $(t,x)$, for each $t\in G$ we let 
$$N_t = \{x\in \Gamma : \psi(t,x) = u(t,x)\}.$$ 
By Fubini's Theorem, the set $\{t\in G: m_\Gamma(N_t^c)> 0\}$ has measure zero, that is, 
for almost all $t\in G$, we have that 
$\psi(t,x) = u(t,x)$ almost everywhere. As $\psi$ is separately continuous, the last equality holds for all $x\in\Gamma$. 
Using again the separate continuity of $\psi$ we obtain that $\psi(t,x)=u(t,x)$ for all $(t,x)$. 
Furthermore, by (\ref{eq_noess}),
\[
\begin{split}
    \|\psi\|_{B(G\times \Gamma)} 
    &\leq \esssup_{(s,y)\in G\times \Gamma} \|\rho(y)^* \mathcal{V}(s) \lambda_{y}^\Gamma \xi\| \esssup_{(t,x)\in G\times \Gamma} \| \rho(x)^* \mathcal{W}(t)\lambda_{x}^\Gamma \xi \| \\
        &\leq \esssup_{s\in G} \| \mathcal{V}(s) \| \esssup_{t\in G} \| \mathcal{W}(t) \| = \| \psi \|_\Sch.
    \end{split}
\]

(ii)$\implies$(i) Assume that $\psi\in B(G \times \Gamma)$. 
By \cite{BoFe84}, 
there exist a Hilbert space $\HH$ and continuous $v,w : G \times \Gamma \to \HH$ such that 
$$\psi(ts^{-1}, xy^{-1}) = \ip{v(s,y)}{w(t,x)}, \ \ \ s,t\in G, x,y\in \Gamma,$$ 
and 
$$\| \psi \|_{B(G\times \Gamma)} = \sup_{(s,y)} \| v(s,y) \| \sup_{(t,x)} \| w(t,x) \|.$$ 
Choose an orthonormal basis $\{e_i\}_{i\in I}$ in $\HH$ and let 
$v_i(s,y) := \ip{v(s,y)}{e_i}$ and $w_i(t,x) := \ip{e_i}{w(t,x)}$.
Then
\[
    \psi(ts^{-1}, xy^{-1}) = \sum_{i\in I} v_i(s,y) w_i(t,x), \ \ \ \ \ \ s,t\in G, x,y\in \Gamma.
\]
Let $S$ be the completely bounded operator on $\Bd[L^2(G\times\Gamma)]$,
given by $S(T) := \sum_{i \in I} M_{w_i} T M_{v_i}$.
Clearly,
\begin{equation}\label{eq_BGGa}
    \| S \|_\cb = \| \psi \|_{B(G \times \Gamma)}.
\end{equation}
To complete the proof, it suffices 
to show that the restriction of the operator $S$ to $C_r^*(\Gamma)\rtimes_{\beta,\id}^{w^*} G$ is given by
\begin{equation}\label{eq_Spi}
    S \big( \pi^\id(\lambda_x^\Gamma)\lambda^\id_s \big) = \pi^\id \big( \psi(s,x)\lambda_x^\Gamma \big)\lambda^\id_s.
\end{equation}
First note that 
\begin{equation}\label{eq_piid}
(\pi^\id (\lambda_x^\Gamma)\xi)(t) = \beta_{t^{-1}}(\lambda^\Gamma_x)\xi(t) = \overline{t(x)}\lambda^\Gamma_x\xi(t), 
\ \ \xi\in L^2(G,L^2(\Gamma)).
\end{equation}
Writing $v_i(t)(\cdot)$ and $w_i(t)(\cdot)$ for $v_i(t,\cdot)$ and $w_i(t,\cdot)$, respectively, 
for $t\in G$ and $y\in \Gamma$, and fixing 
$\xi,\eta \in L^2(G,L^2(\Gamma))$, 
we have
\begin{eqnarray*}
& &  
\left\langle S(\pi^\id (\lambda_x^\Gamma)\lambda^\id_s)\xi,\eta\right\rangle \\
& = & 
\sum_{i \in I} \left\langle M_{w_i}\pi^\id (\lambda_x^\Gamma)\lambda^\id_s M_{v_i}\xi,\eta \right\rangle \\
& = & 
\sum_{i \in I} \int \left( M_{w_i(t)} \overline{t(x)} \lambda_x^\Gamma M_{v_i(s^{-1}t)}\xi(s^{-1}t) \right)(y) \overline{\eta(t,y)} dtdy \\
& = & 
\sum_{i \in I} \int w_i(t,y) v_i(s^{-1}t,x^{-1}y) \overline{t(x)}\xi(s^{-1}t, x^{-1}y) \overline{\eta(t,y)} dtdy \\
& = & 
\int \psi(tt^{-1}s, yy^{-1}x)\overline{t(x)}\xi(s^{-1}t, x^{-1}y)\overline{\eta(t,y)} dtdy \\
& = & 
\int \psi(s,x) \big( \pi^\id(\lambda_x^\Gamma)\lambda^\id_s\xi \big)(t,y) \overline{\eta(t,y)} dtdy.
\end{eqnarray*}
Together with (\ref{eq_piid}), this establishes (\ref{eq_Spi}). 
In addition, 
\[
    \| \psi \|_\Sch = \left\| S \big|_{C^*_r(\Gamma) \rtimes_{\beta,\id}^{w^*} G} \right\|_\cb \leq \| S \|_\cb = \| \psi \|_{B(G\times \Gamma)} ,
\]
which together with (\ref{eq_BGGa}) gives the desired equality.

To see that the identification is multiplicative, observe that if 
$\psi , \chi \in \HSconv^\id(G)$ then $S_{F_\psi} S_{F_\chi} = S_{F_{\psi\chi}}$.
\end{proof}

In Theorem~\ref{th:convmultsnonab} below we will show that the
identification in Theorem \ref{conv_ab} is in fact a complete isometry.

\subsection{General case}
\label{ssec:convmultsgeneral}

Now let $G$ be an arbitrary locally compact group. 
In order to define convolution multipliers, we replace $C^*_r(\Gamma)$ with the quantum group dual of 
$C^*_r(G)$, namely $C_0(G)$, equipped with its natural action of $G$.
Similarly we replace $B(\Gamma)$ by $M(G)$, the Banach algebra of all complex-valued Radon measures on $G$ 
with the convolution multiplication, given by
\[
    (\mu \ast \nu)(f) := \int_G \int_G f(st) d\mu(s)\, d\nu(t) , \quad f\in C_0(G),\ \mu , \nu\in M(G) .
\]
We identify $L^1(G)$ with the norm-closed ideal in $M(G)$ consisting of absolutely continuous measures 
with respect to left Haar measure. We have that $L^1(G)$ is an $M(G)$-bimodule in the natural way. 
Using the identification $L^1(G)^* = L^\infty(G)$, we arrive at an $M(G)$-bimodule structure on 
$L^\infty(G)$, given by
\[
    \dualp{\mu\cdot f}{h} = \dualp{f}{h \ast \mu} \text{ and } \dualp{f \cdot \mu}{h} = \dualp{f}{\mu \ast  h} ,
\]
for $h\in L^1(G)$, $f\in L^\infty(G)$, $\mu\in M(G)$.
In particular,
\[
    (\mu\cdot f)(s) = \int_G f(st)d\mu(t) \quad  \text{ and } \quad (f\cdot \mu)(t) = \int_G f(st)d\mu(s).
\]
Let $\rho$ be the right regular representation of $G$ on $L^2(G)$; thus,
\[
    (\rho_s\xi)(t) = \Delta(s)^{1/2}\xi(ts).
\]
For $\mu\in M(G)$, define a bounded linear operator $\theta(\mu)(a)$, $a \in \Bd[L^2(G)]$, by 
\[
    \theta(\mu)(a) := \int_G \rho_sa\rho_s^*d\mu(s) .
\]
By \cite[Theorem 3.2]{NRS08} (see also \cite[Theorem 4.5]{Neu00}), the map $\theta$ is a 
weak$^*$-weak$^*$ continuous completely isometric homomorphism from $M(G)$ to the space $\cbw(\Bd[L^2(G)])$ 
of all completely bounded weak* continuous linear maps on $\mathcal{B}(L^2(G))$ and
$\|\theta(\mu)\|_\cb = \|\theta(\mu)\| = \|\mu\|$. 
We have
\[
    \theta(\mu)(f) = \mu\cdot f\in L^\infty(G), \quad f\in L^\infty(G). 
\]
Moreover, $\theta(\mu)$ is a $\vN(G)$-bimodule map.

For each $t \in G$, let $\beta_t : L^\infty(G) \to L^\infty(G)$ be given by $\beta_t(f) := \lambda^G_t f \lambda^G_{t\inv} = f_t$, where $f_t(x)=f(t^{-1}x)$.
Then 
\begin{equation}\label{eq_betcom}
    \beta_t \circ \theta(\mu) = \theta(\mu) \circ \beta_t, \quad t\in G.
\end{equation}

For $\Lambda = \{\mu_t\}_{t\in G}\subseteq M(G)$, define $F_\Lambda : G \to \cbm(C_0(G))$ by
\[
	F_\Lambda(t)(f) := \theta(\mu_t)(f) , \quad t \in G,\ f \in C_0(G) .
\]

\begin{definition}\label{de:convmultgeneral}
A family $\Lambda = \{\mu_t\}_{t\in G}\subseteq M(G)$ is called a 
\emph{convolution multiplier} if $F_\Lambda$ is a Herz--Schur $(C_0(G),G,\beta)$-multiplier.
\end{definition}

\noindent 
If $\Lambda = \{\mu_t\}_{t\in G}$ is a convolution multiplier, we set $\|\Lambda\|_\HS = \|F_\Lambda\|_\HS$.

Let $\id$ denote the representation of $C_0(G)$ on $L^2(G)$ by multiplication operators and 
$\HSconv^\id(G)$ be the collection of families $\Lambda = \{\mu_t\}_{t\in G}\subseteq M(G)$ such that $F_\Lambda$ is a Herz--Schur $\id$-multiplier of $(C_0(G),G,\beta)$, endowed with the algebra structure coming from pointwise operations on the maps $F_\Lambda$.
When $G$ is abelian, 
the identifications $C_0(G) \equiv C^*_r(\Gamma)$ and $M(G) \equiv B(\Gamma)$ 
show that the usage of the notation $\HSconv^\id(G)$ agrees with that from Subsection \ref{ssec:abelianconvmults}.

Consider the operator space projective tensor product $L^1(G) \projtens A(G) = (L^\infty(G) \btens \vN(G))_*$. 
We note that, when equipped with the product, given on elementary tensors by 

\[
    (f\otimes u) (g\otimes v) = (f\ast g)\otimes (uv), \quad f,g\in L^1(G),\ u,v\in A(G),
\]
the operator space $L^1(G) \projtens A(G)$ is a completely contractive Banach algebra. 
A map $T \in \Bd[L^1(G) \projtens A(G)]$ will be called a \emph{right multiplier of $L^1(G) \projtens A(G)$} if 
\[
    T(ab) = a T(b), \quad a,b \in L^1(G) \projtens A(G).
\]
If, in addition, $T$ is completely bounded, 
we write $T\in \Mcb^r(L^1(G) \projtens A(G))$, and 
call $T$ a \emph{right completely bounded multiplier of $L^1(G) \projtens A(G)$}.
When $G$ is abelian we have the identifications 
\[
    \Mcb^r(L^1(G) \projtens A(G)) = \Mcb(A(\Gamma \times G)) = B(\Gamma \times G).
\]
Our goal is to generalise Theorem~\ref{conv_ab}, 
identifying $\HSconv^\id(G)$ with the space of right completely bounded multipliers $\Mcb^r(L^1(G) \projtens A(G))$.

If $M$ is any of the von Neumann algebras $L^{\infty}(G)$, $\vN(G)$ or $L^\infty(G) \btens \vN(G)$, 
$T\in M$ and $f\in M_*$, we write $f\cdot T$ and $T\cdot f\in M$ for the operators given by 
\[
	\dualp{f \cdot T}{g} \defeq \dualp{T}{gf}, \quad  \dualp{T\cdot f}{g} \defeq \dualp{T}{fg}, 
	\quad g\in M_*,
\]
where $\dualp{\cdot}{\cdot}$ is the pairing between $M$ and $M_*$. 
We recall \cite{eymard} 
that the support of $T\in \vN(G)$ is the closed set of all $t\in G$ such that $u\cdot T\ne 0$ whenever $u\in A(G)$ and $u(t)\ne 0$.

\begin{lemma}\label{le:rightmultmeasures}
If $T\in \Mcb^r(L^1(G) \projtens A(G))$ then there exists a unique family $\{\mu_t\}_{t\in G}\subseteq M(G)$ such that 
\[
    T^*(f\otimes\lambda^G_t) = \theta(\mu_t)(f)\otimes\lambda^G_t, \quad f\in L^{\infty}(G), t\in G.
\]
\end{lemma}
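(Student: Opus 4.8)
The plan is to dualise and read off from the right-multiplier identity that $T^*$ is a module map on $M:=L^\infty(G)\btens\vN(G)=(L^1(G)\projtens A(G))^*$, then to localise in the $\vN(G)$-leg at each point $t$ using the support of $\lambda^G_t$, and finally to recognise the resulting normal map on $L^\infty(G)$ as $\theta(\mu_t)$ via Wendel's theorem.

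First I would set $\mathcal A:=L^1(G)\projtens A(G)$ and equip $M=\mathcal A^*$ with the dual $\mathcal A$-module action $\dualp{\omega\cdot a}{b}:=\dualp{\omega}{ab}$ ($a,b\in\mathcal A$, $\omega\in M$). Since $T^*$ is the Banach-space adjoint of a bounded map it is normal and completely bounded, and dualising $T(ab)=aT(b)$ gives the module identity $T^*(\omega\cdot a)=(T^*\omega)\cdot a$. On elementary tensors one computes directly that
\[
	(f\otimes\lambda^G_t)\cdot(g\otimes v)=v(t)\,\big((f\cdot\mu_g)\otimes\lambda^G_t\big),
\]
where $\mu_g$ is the measure $g\,dm_G$ and $f\cdot\mu_g\in L^\infty(G)$ is the module action recalled before the lemma; in particular this vanishes whenever $v(t)=0$.

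Next, fix $t\in G$ and $f\in L^\infty(G)$ and put $\omega_t:=T^*(f\otimes\lambda^G_t)$. Combining the two displays gives $\omega_t\cdot(g\otimes w)=0$ for all $g\in L^1(G)$ and all $w\in A(G)$ with $w(t)=0$; unravelling the module action this reads $\dualp{\omega_t}{(g*h)\otimes(wu)}=0$ for all $h\in L^1(G)$ and $u\in A(G)$. Since $L^1(G)$ has a bounded approximate identity the products $g*h$ are total in $L^1(G)$, while the span of $\{wu:w,u\in A(G),\ w(t)=0\}$ is dense in the ideal $I_t:=\{v\in A(G):v(t)=0\}$ (the closed ideal it generates has hull $\{t\}$, and $\{t\}$ is a set of spectral synthesis for $A(G)$). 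Passing to the left slices $L_k(\omega_t)\in\vN(G)$ ($k\in L^1(G)$) we deduce that each $L_k(\omega_t)$ annihilates $I_t$, hence lies in $I_t^\perp=\CC\lambda^G_t$ (using $\dualp{\lambda^G_t}{v}=v(t)$); therefore $\omega_t=\Theta_t(f)\otimes\lambda^G_t$ for a uniquely determined $\Theta_t(f)\in L^\infty(G)$, and $f\mapsto\Theta_t(f)$ is a normal, completely bounded linear map on $L^\infty(G)$.

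Finally, substituting $\omega_t=\Theta_t(f)\otimes\lambda^G_t$ back into the module identity (with $v(t)=1$) yields $\Theta_t(f\cdot\mu_g)=\Theta_t(f)\cdot\mu_g$ for all $g\in L^1(G)$; approximating the point mass $\delta_s$ by left translates of a bounded approximate identity of $L^1(G)$ and using normality of $\Theta_t$ shows that $\Theta_t$ commutes with all the translation automorphisms $\beta_s$ of $L^\infty(G)$. Hence the predual $(\Theta_t)_*$ is a bounded operator on $L^1(G)$ commuting with left translations, so by Wendel's theorem it is right convolution by a unique $\mu_t\in M(G)$; dualising gives $\Theta_t(f)=\mu_t\cdot f=\theta(\mu_t)(f)$, which is the asserted formula, and uniqueness of each $\mu_t$ (hence of the family) is part of Wendel's theorem — or directly, $\mu\cdot f=0$ for all $f\in L^\infty(G)$ forces $\mu=0$. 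I expect the localisation step in the third paragraph to be the main obstacle: one must identify $I_t^\perp$ in $\vN(G)=A(G)^*$ with $\CC\lambda^G_t$, use regularity and spectral synthesis of points for $A(G)$ to recover $I_t$ from the products $wu$, and push this through the operator space projective tensor product carefully.
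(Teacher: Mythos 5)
Your argument is correct and follows essentially the same route as the paper's: dualise the right-multiplier identity, localise the $\vN(G)$-leg of $T^*(f\otimes\lambda^G_t)$ at the point $t$ (the paper does this via Eymard's support theorem for $\vN(G)$ together with Kraus's slice-map theorem, which is exactly your $I_t^\perp=\CC\lambda^G_t$ plus synthesis of singletons argument), and then identify the resulting normal right $L^1(G)$-module map on $L^\infty(G)$ with $\theta(\mu_t)$ (the paper cites Neufang--Ruan--Spronk where you invoke Wendel, which suffices for the statement as formulated). The only differences are which classical results are named at the two identification steps; the skeleton of the proof is identical.
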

\begin{proof}
Let $f_1 , f_2\in L^1(G)$, $a_1 , a_2\in A(G)$. The equality
\[
    T((f_1\otimes a_1)(f_2\otimes a_2)) = (f_1\otimes a_1)T(f_2\otimes a_2)
\]
implies that, if $g\in L^\infty(G)$ then
\begin{equation}\label{pairing}
    \dualp{T^*(g\otimes\lambda^G_t)}{(f_1\otimes a_1)(f_2\otimes a_2)} = a_1(t) \dualp{T^*(g\cdot f_1 \otimes \lambda^G_t)}{f_2\otimes a_2}.
\end{equation}
Taking the limit along an approximate identity $\{f_\alpha\}_{\alpha\in \mathbb{A}}$ of $L^1(G)$, we obtain 
\begin{equation}\label{eq_Tst}
    \langle T^*(g\otimes\lambda^G_t),f_2\otimes a_1a_2\rangle = \langle a_1(t) T^*(g \otimes\lambda^G_t), f_2\otimes a_2\rangle.
\end{equation}
For $\omega\in L^1(G)$, let 
$R_\omega : L^\infty(G) \btens \vN(G) \to \vN(G)$ be the slice map, 
defined by 
$$\langle R_\omega(S),a\rangle := \langle S, \omega\otimes a\rangle, \ \ S\in L^\infty(G) \btens \vN(G), a\in A(G).$$
After taking a limit along an approximate unit for $L^1(G)$, equation (\ref{eq_Tst}) implies that
\[
    a_1\cdot R_\omega \big( T^*(g\otimes\lambda^G_t) \big) = a_1(t) R_\omega \big( T^*(g\otimes\lambda^G_t) \big).
\]
It follows that $R_\omega(T^*(g\otimes\lambda^G_t))\in \vN(G)$ has support in $\{ t \}$. 
By \cite[Th\'eor\`eme 4.9]{eymard}, 
$R_\omega(T^*(g\otimes\lambda^G_t)) = c(\omega,t)\lambda^G_t$ for some constant $c(\omega,t)$ and 
\[
    R_\omega(T^*(g\otimes\lambda^G_t)(1\otimes\lambda^G_{t^{-1}}))\in \CC I.
\]
By \cite{kraus},
$T^*(g\otimes\lambda^G_t)(1\otimes\lambda^G_{t^{-1}}) \in L^\infty(G) \otimes \mathbb{C}I$ and hence
$T^*(g\otimes\lambda^G_t) = g_t\otimes\lambda^G_t$ for some $g_t\in L^\infty(G)$.

The map $\Phi_t : g \mapsto g_t$ is completely bounded, normal, and 
$T^*(g\otimes\lambda^G_t) = \Phi_t(g)\otimes\lambda^G_t$, $t\in G$. 
By (\ref{pairing}),
\[
    \Phi_t(g\cdot f_1)=\Phi_t(g)\cdot f_1, \quad f_1\in L^1(G) ,
\]
showing that $(\Phi_t)_*(f_1\ast f_2) = f_1 \ast ((\Phi_t)_*(f_2))$. 
Thus $(\Phi_t)_*$ is a right completely bounded multiplier of $L^1(G)$. 
By \cite[Theorem 3.2]{NRS08} (see also \cite[Theorem 4.5]{Neu00}), there exist $\{\mu_t\}_{t\in G}$ such that $\Phi_t(g) = \theta(\mu_t)(g)$. 
\end{proof}

In what follows we will speak of a family $\Lambda = \{\mu_t\}_{t\in G} \subseteq M(G)$
being a convolution multiplier or a (completely bounded) right multiplier. 
For a right multiplier $\Lambda$ of $L^1(G) \projtens A(G)$ we denote by $R_{\Lambda}$ the mapping on $L^\infty(G) \btens \vN(G)$, given by
\begin{equation}\label{eq:convMcbr}
    R_\Lambda (f \otimes \lambda^G_r) := \theta(\mu_r)(f) \otimes \lambda^G_r .
\end{equation}

\begin{theorem}\label{th:convmultsnonab}
Let $\Lambda=\{\mu_t\}_{t\in G} \subseteq M(G)$. 
The following are equivalent:
\begin{enumerate}[i.]
    \item $\Lambda \in \HSconv^\id(G)$;
    \item $\Lambda\in \Mcb^r(L^1(G) \projtens A(G))$.
\end{enumerate}
The identification $R_\Lambda \mapsto S_{F_\Lambda}$ is a completely isometric algebra isomorphism.
\end{theorem}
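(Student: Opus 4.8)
The plan is to link both conditions to the single object $\mathcal N(F_\Lambda)$ and run the transference machinery of \cite{mtt} in both directions. By Lemma~\ref{le:rightmultmeasures}, condition (ii) says exactly that the map $R_\Lambda$ of \eqref{eq:convMcbr} is well defined and completely bounded on $L^\infty(G)\btens\vN(G)=(L^1(G)\projtens A(G))^*$ (it is then automatically normal, being a dual map), while (i) says that $S_{F_\Lambda}$ is a completely bounded, weak$^*$-continuous map on $C_0(G)\rtimes^{w^*}_{\beta,\id}G$. The common coordinate is the identity
\[
    \mathcal N(F_\Lambda)(s,t)=\theta(\mu_{ts\inv}),\qquad s,t\in G,
\]
which follows from the definition of $\mathcal N$ together with the commutation relation \eqref{eq_betcom}; thus $\mathcal N(F_\Lambda)$ is a Schur-$C_0(G)$-multiplier candidate of Toeplitz type, $\mathcal N(F_\Lambda)(s,t)(M_f)=M_{\mu_{ts\inv}\cdot f}$, and by the transference theorem \cite[Theorem~3.8]{mtt} --- the tool already used in the proof of Theorem~\ref{conv_ab} --- condition (i) is equivalent to $\mathcal N(F_\Lambda)$ being a genuine Schur $C_0(G)$-multiplier, with equal norms.

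For (i)$\implies$(ii) I would then proceed as in the proof of Theorem~\ref{conv_ab}. Having $\mathcal N(F_\Lambda)$ a Schur $C_0(G)$-multiplier, Theorem~\ref{th:Schurmultsmtt} supplies a nondegenerate representation $\rho$ of $C_0(G)$ and fields $\mathcal V,\mathcal W\in L^\infty(G,\Bd[L^2(G),\HH_\rho])$ with
\[
    \theta(\mu_{ts\inv})(M_f)=\mathcal W(t)^*\rho(M_f)\mathcal V(s)\quad\text{a.e.},
\]
and $\esssup_s\|\mathcal V(s)\|\,\esssup_t\|\mathcal W(t)\|=\|\Lambda\|_\HS$. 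Using that $\theta$ is a weak$^*$-continuous complete isometry of $M(G)$ into $\cbw(\Bd[L^2(G)])$ and that each $\theta(\mu)$ is a $\vN(G)$-bimodule map (so $\theta(\mu)$ is determined by $\mu$ off $C_0(G)$ as well), I would extract from this dilation --- carrying the computation out at every matrix level, and passing from ``almost all $(s,t)$'' to genuine identities by a Fubini/separate-continuity argument as in Theorem~\ref{conv_ab} --- the symbol of a right completely bounded multiplier of $L^1(G)\projtens A(G)=L^1(G\times\widehat G)$; the representation theorem for such multipliers (the quantum-group counterpart of the Gilbert/Bo\.{z}ejko--Fendler theorem, in the spirit of \cite[Theorem~3.2]{NRS08}, \cite[Theorem~4.5]{Neu00}) then yields $\Lambda\in\Mcb^r(L^1(G)\projtens A(G))$ with $\|R_\Lambda\|_\cb\le\|\Lambda\|_\HS$.

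For (ii)$\implies$(i) I would go the other way. A right completely bounded multiplier $R_\Lambda$ of $L^1(G)\projtens A(G)$ extends, by the same representation theorem, to a normal completely bounded map $S$ on $\Bd[L^2(G)\otimes L^2(G)]$ with $\|S\|_\cb=\|R_\Lambda\|_\cb$. Transport along the unitary $U$ on $L^2(G\times G)$ given by $(U\xi)(t,x):=\xi(t,t\inv x)$: a direct check gives $U\pi^\id(f)U^*=1\otimes M_f$ and $U\lambda^\id_sU^*=\lambda^G_s\otimes\lambda^G_s$, so $U$ carries $C_0(G)\rtimes^{w^*}_{\beta,\id}G$ onto the von~Neumann subalgebra $\{\,1\otimes L^\infty(G),\ \lambda^G_s\otimes\lambda^G_s:s\in G\,\}''$ of $\Bd[L^2(G\times G)]$. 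One then verifies, by the same explicit computation as in \eqref{eq_piid}--\eqref{eq_Spi}, that $S$ restricts on the image of the crossed product to $S_{F_\Lambda}$; hence $\Lambda\in\HSconv^\id(G)$ with $\|\Lambda\|_\HS\le\|R_\Lambda\|_\cb$, and combining the two estimates shows that $R_\Lambda\mapsto S_{F_\Lambda}$ is a complete isometry. Multiplicativity is immediate: $S_{F_\Lambda}S_{F_{\Lambda'}}=S_{F_{\Lambda''}}$, where $\Lambda''$ is the family attached to $R_\Lambda R_{\Lambda'}$, exactly as at the end of the proof of Theorem~\ref{conv_ab}.

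The crux --- and the main obstacle --- is the middle step of (i)$\implies$(ii): turning the Schur-multiplier dilation of $\mathcal N(F_\Lambda)$, which a priori only controls the symbol on the non-self-adjoint, weak$^*$-dense part of $\Bd[L^2(G)]\btens\vN(G)$ coming from $C_0(G)\rtimes^{w^*}_{\beta,\id}G$, into an honest completely bounded right multiplier of $L^1(G)\projtens A(G)$, and doing so with matching (not merely comparable) complete-bound norms at all matrix levels --- in contrast to the abelian Theorem~\ref{conv_ab}, where only an isometric statement was obtained. This is where one must combine carefully the double nature of $\theta$ (weak$^*$-continuous complete isometry \emph{and} $\vN(G)$-bimodule map) with the representation theorem for right completely bounded multipliers; it is also where the measurability of $t\mapsto\mu_t$ and the marginal-null-set bookkeeping of Proposition~\ref{pr:zeroalmosteverywhere} have to be tracked.
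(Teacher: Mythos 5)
Your direction (ii)$\implies$(i) is essentially the paper's argument: the paper extends $R_\Lambda$ to a normal completely bounded $\vN(G)\btens L^\infty(G)$-bimodule map $\widetilde{R_\Lambda}$ on $\Bd[L^2(G\times G)]$ via \cite[Proposition 4.3]{jnr} and then checks, by approximating $\pi(f)$ weak$^*$ by sums of elementary tensors and using the bimodule property, that $\widetilde{R_\Lambda}$ restricts to $S_{F_\Lambda}$ on the crossed product. Your unitary conjugation by $U$ is a cosmetic variant of this; note, though, that the tool you need there is precisely the \emph{bimodule} extension of \cite{jnr} (so that $S$ can be evaluated on $1\otimes M_f$ and on $\lambda_s\otimes\lambda_s$, which are not elementary tensors in $L^\infty(G)\btens\vN(G)$), not the $M(G)$-representation theorems of \cite{NRS08} and \cite{Neu00}, which concern $\Mcb^r(L^1(G))$ only.

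The genuine gap is in (i)$\implies$(ii), and you have in effect flagged it yourself in your final paragraph without closing it. After obtaining the dilation $\theta(\mu_{ts\inv})(f)=\mathcal W(t)^*\rho(f)\mathcal V(s)$ you appeal to ``the representation theorem for right completely bounded multipliers of $L^1(G)\projtens A(G)$'' to conclude $\Lambda\in\Mcb^r(L^1(G)\projtens A(G))$. No such theorem is available in the paper's toolkit: the cited results \cite[Theorem 3.2]{NRS08} and \cite[Theorem 4.5]{Neu00} identify $\Mcb^r(L^1(G))$ with $M(G)$, and a Gilbert--Bo\.{z}ejko--Fendler-type characterisation of $\Mcb^r(L^1(G)\projtens A(G))$ is essentially the content of the theorem you are trying to prove, so invoking it is circular. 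The paper closes this step by a concrete construction that your proposal omits: form the decomposable operators $V,W$ on $L^2(G,\HH_\rho\otimes L^2(G))$ from the fields $\mathcal V,\mathcal W$, \emph{define} $R_\Lambda(f\otimes\lambda^G_t):=W^*(\rho(f)\otimes\lambda^G_t)V$, and compute pointwise that
\[
R_\Lambda(f\otimes\lambda^G_t)\xi(s)=\mathcal W(s)^*\rho(f)\mathcal V(t\inv s)\xi(t\inv s)=\big(\theta(\mu_t)(f)\otimes\lambda^G_t\,\xi\big)(s),
\]
so that $R_\Lambda$ maps into $L^\infty(G)\btens\vN(G)$ and is automatically normal and completely bounded with $\|R_\Lambda\|_\cb\leq\esssup_s\|\mathcal V(s)\|\esssup_t\|\mathcal W(t)\|$; the right-module identity $(R_\Lambda)_*(ab)=a(R_\Lambda)_*(b)$ is then verified by a direct chain of dual pairings using $\dualp{\mu_t\cdot g}{f_1\ast f_2}=\dualp{g\cdot f_1}{f_2\ast\mu_t}$ and $\dualp{a_1\cdot\lambda^G_t}{a_2}=a_1(t)\dualp{\lambda^G_t}{a_2}$. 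Without this (or an equivalent) construction, the implication (i)$\implies$(ii) is not established, and the norm equality and complete isometry claims that depend on the two opposite inequalities are likewise incomplete.
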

\begin{proof}
(i)$\implies$(ii) 
We identify $C_0(G)\rtimes_{\beta ,\id}^{w^*} G$ with the von~Neumann algebra crossed product 
$L^\infty(G)\rtimes_\beta^{\rm vN} G$, and
let $\Lambda = \{\mu_t\}_{t\in G}$ be a convolution multiplier. 
For $f\in L^\infty(G)$, using (\ref{eq_betcom}) we have
\[
\begin{split}
    \cl N(F_\Lambda)(s,t)(f) &= \beta_{t^{-1}} \big( F_\Lambda(ts^{-1})(\beta_t(f)) \big) \\ 
        &= \beta_{t^{-1}}\big( \theta(\mu_{ts^{-1}})(\beta_t(f)) \big) = \theta(\mu_{ts^{-1}})(f).
\end{split}
\]
Following similar arguments as in the proof of \cite[Theorem 3.8]{mtt}, we obtain that there exist a normal $*$-representation $\rho$ of $L^\infty(G)$ on $\HH_\rho$ and $\mathcal{V}, \mathcal{W} \in L^{\infty}(G, \Bd[L^2(G),\HH_\rho])$ such that
\[
    \theta(\mu_{ts^{-1}})(f) = \mathcal{W}^*(t) \rho(f) \mathcal{V}(s)
\]
and $\|\Lambda\|_\Sch = \esssup_{s\in G} \| \mathcal{V}(s) \| \esssup_{t\in G} \|\mathcal{W}(t) \|$. 

Define a map $R_\Lambda: L^\infty(G) \btens \vN(G)\to \Bd[L^2(G)\otimes L^2(G)]$ by
\[
    R_\Lambda(f\otimes\lambda^G_t) := W^*(\rho(f)\otimes\lambda^G_t)V,
\]
where $V,W\in \Bd[L^2(G, \HH_\rho\otimes L^2(G))]$ are given by $(V\xi)(t) = \mathcal{V}(t)\xi(t)$, $(W\xi)(t) = \mathcal{W}(t)\xi(t)$.
Then
\[
\begin{split}
    R_\Lambda(f \otimes \lambda^G_t)\xi(s) = \mathcal{W}^*(s) \rho(f) \mathcal{V}(t^{-1}s)\xi(t^{-1}s) &= \theta(\mu_{s(s^{-1}t)})(f)\xi(t^{-1}s) \\
        &= (\theta(\mu_t)(f) \otimes \lambda_t^G \xi)(s) .
\end{split}
\]
In particular, $R_\Lambda (f\otimes\lambda^G_t) \in L^\infty(G) \btens \vN(G)$, and hence $R_\Lambda$ is a normal completely bounded map on $L^\infty(G) \btens \vN(G)$.
Moreover, if $f_1 , f_2\in L^1(G)$, $a_1 , a_2\in A(G)$, $g\in L^\infty(G)$, and $(R_\Lambda)_*$ is the predual
of $R_{\Lambda}$, we have 
\[
\begin{split}
    &\dualp{g\otimes\lambda^G_t}{(R_\Lambda)_*((f_1\otimes a_1)(f_2\otimes a_2))} = \dualp{\theta(\mu_t)(g)\otimes\lambda^G_t}{f_1\ast f_2\otimes a_1a_2} \\
        &\quad = \dualp{\mu_t \cdot g}{f_1\ast f_2} \dualp{\lambda^G_t}{a_1a_2} = \dualp{g}{(f_1 \ast f_2) \ast \mu_t} \dualp{a_1\cdot  \lambda^G_t}{a_2} \\
        &\quad = \dualp{g\cdot f_1}{f_2 \ast \mu_t} \dualp{a_1 \cdot \lambda^G_t}{a_2} = \dualp{\mu_t \cdot (g \cdot f_1)}{f_2} \dualp{a_1(t) \lambda^G_t}{a_2} \\
        &\quad = \dualp{R_\Lambda(g \cdot f_1 \otimes a_1(t)\lambda^G_t)}{f_2\otimes a_2} = \dualp{g \cdot f_1\otimes a_1(t) \lambda^G_t}{(R_\Lambda)_*(f_2\otimes a_2)} \\
        &\quad = \dualp{g\otimes \lambda^G_t}{(f_1\otimes a_1) (R_\Lambda)_*(f_2\otimes a_2)} ,
\end{split}
\]
\textit{i.e.}
\[
    (R_\Lambda)_*((f_1\otimes a_1)(f_2\otimes a_2)) = (f_1\otimes a_1) (R_\Lambda)_*(f_2\otimes a_2).
\]
Hence $(R_\Lambda)_*(ab) = a (R_\Lambda)_*(b)$ for any $a ,b \in L^1(G) \projtens A(G)$ and therefore $(R_\Lambda)_*$ 
is a right completely bounded multiplier of $L^1(G)\widehat\otimes A(G)$.
In addition, 
\begin{equation}\label{eq_inlno2}
    \| R_\Lambda \|_\cb \leq \esssup_{s\in G} \| \cl{V}(s) \| \esssup_{t\in G} \| \cl{W}(t) \| = \| \Lambda \|_\Sch.
\end{equation}

(ii)$\implies$(i) 
Assume now that $\Lambda = \{\mu_t\}_{t\in G} \in \Mcb^r(L^1(G) \projtens A(G))$, \textit{i.e.}\ the map $f\otimes\lambda^G_t \mapsto \theta(\mu_t)(f) \otimes \lambda^G_t$ extends to a normal 
right $L^1(G) \projtens A(G)$-modular completely bounded map $R_\Lambda$ on $L^\infty(G) \btens \vN(G)$. 
By \cite[Proposition 4.3]{jnr}, there exists a unique 
$\vN(G)\btens L^\infty(G)$-bimodule map
$\widetilde{R_\Lambda} \in \cbw(\Bd[L^2(G \times G)])$ such that 
$\widetilde{R_\Lambda}|_{L^\infty(G) \btens \vN(G)} = R_\Lambda$
and $\|\widetilde{R_\Lambda}\|_{\cb}=\|R_\Lambda\|_{\cb}$.
We have, in particular, 
\begin{equation}\label{eq_modtw}
    \widetilde{R_\Lambda}(g\otimes f\lambda^G_t ) = \theta(\mu_t)(g) \otimes f\lambda^G_t , \quad f,g \in L^\infty(G).
\end{equation}

Note that $L^2(G\times G) \equiv L^2(G,L^2(G))$ and 
let $\pi : L^{\infty}(G) \to \cl B(L^2(G\times G))$ be the *-representation, given by 
$$\pi(f) \xi(t) = \beta_{t^{-1}}(f)(\xi(t)), \ \ \xi\in L^2(G\times G), f\in L^\infty(G).$$
Let $f\in L^{\infty}(G)$ and note that $\pi(f)\in L^{\infty}(G\times G)$. 
Thus, there exists a net 
$\{\omega_\alpha\}_{\alpha\in \mathbb{A}} \subseteq \lspan\{ g\otimes h : g,h \in L^\infty(G) \}$, 
with 
$\omega_{\alpha}\to_{\alpha\in \mathbb{A}} \pi(f)$ in the weak* topology.  
Write $\omega_\alpha = \sum_{i = 1}^{n_{\alpha}} g_{i,\alpha} \otimes h_{i,\alpha}$.  
Using (\ref{eq_betcom}) and (\ref{eq_modtw}), we have 

\[
    \big( \widetilde{R_\Lambda} (\pi(f)(1 \otimes\lambda^G_r) \big) 
    = \lim_{\alpha\in \mathbb{A}} \sum_{i =1}^{n_{\alpha}} \theta(\mu_r)(g_{i,\alpha}) \otimes h_{i,\alpha} \lambda^G_r 
    = \pi \big( \theta(\mu_r)(f) \big) (1\otimes\lambda^G_r).
\]
Since 
$$(\widetilde{S_{F_{\Lambda}}} (\pi(f)(1\otimes\lambda^G_t)) = (\pi(\theta(\mu_t)(f))(1\otimes\lambda^G_t),$$ 
the restriction of $\widetilde{R_\Lambda}$ to the crossed product $C_0(G)\rtimes_{\beta,r}G$ coincides with $S_{F_\Lambda}$, 
implying the converse statement. 
Note, in addition, that 
\begin{equation}\label{eq_inlno}
\| S_{F_\Lambda} \|_\cb \leq \| \widetilde{R_\Lambda} \|_\cb = \| R_\Lambda\|_\cb.
\end{equation}

By (\ref{eq_inlno2}), 
$\| R_\Lambda \|_\cb \leq \| S_{F_\Lambda} \|_\cb$,
and together with (\ref{eq_inlno}) this shows that $\| R_\Lambda \|_\cb = \| S_{F_\Lambda} \|_\cb$.
Moreover, by Lemma~\ref{le:Ncompleteisomhom} the map $F_\Lambda \mapsto \mathcal{N}(F_\Lambda)$ is a complete isometry, and by \cite[Proposition 4.3]{jnr} the map $R_\Lambda \mapsto \widetilde{R_\Lambda}$ is a complete isometry, 
therefore the norm 
inequalities hold on all matrix levels, implying that the identification $S_{F_\Lambda} \mapsto R_\Lambda$ is a complete isometry.

The homomorphism claim follows from Lemma~\ref{le:Ncompleteisomhom} and the fact that the identification in \cite[Proposition 4.3]{jnr} is a homomorphism.
\end{proof}

We observe that the product of the convolution multipliers $\Lambda = \{ \mu_t \}_{t \in G}$ and 
$\Xi = \{ \nu_t \}_{t \in G}$ is given by $\Lambda \Xi = \{ \mu_t \ast \nu_t \}_{t \in G}$.
We write $\HScent(A , G ,\alpha)$ for the central Herz--Schur $(A,G,\alpha)$-multipliers.

\begin{proposition}\label{le:intersecconvcentr}
We have $\HSconv(G) \cap \HScent(C_0(G) , G ,\beta) = \Mcb A(G)$. 
\end{proposition}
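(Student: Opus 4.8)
The plan is to prove the two inclusions $\Mcb A(G)\subseteq\HSconv(G)\cap\HScent(C_0(G),G,\beta)$ and $\HSconv(G)\cap\HScent(C_0(G),G,\beta)\subseteq\Mcb A(G)$ separately, under the natural identification of all three spaces with (sub)classes of Herz--Schur multipliers of $(C_0(G),G,\beta)$. In both directions the bridge will be the Bo\.{z}ejko--Fendler correspondence, that is, Theorem~\ref{th:transferencemtt} applied to $\CC$ with the trivial action: for that system the Herz--Schur multipliers are precisely the elements of $\Mcb A(G)$, and $\cl N$ sends $u$ to the Toeplitz-type Schur multiplier $(s,t)\mapsto u(ts^{-1})$, with equality of norms.

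For the inclusion $\supseteq$, given $u\in\Mcb A(G)$ I would set $\mu_t:=u(t)\delta_e$ and $\Lambda:=\{\mu_t\}_{t\in G}\subseteq M(G)$. A direct computation gives $F_\Lambda(t)(f)=\theta(u(t)\delta_e)(f)=u(t)f$ for $f\in C_0(G)$, so $F_\Lambda$ is of central form, with $a_t=u(t)\cdot 1\in C_b(G)$. Since $u\in\Mcb A(G)$, Theorem~\ref{th:transferencemtt} (trivial system) together with the scalar characterisation of Schur multipliers produces a Hilbert space $\HHH$ and essentially bounded weakly measurable maps $v_0,w_0:G\to\HHH$ with $u(ts^{-1})=\ip{v_0(s)}{w_0(t)}$ almost everywhere and $\esssup_s\|v_0(s)\|\,\esssup_t\|w_0(t)\|=\|u\|_{\Mcb A(G)}$. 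Setting $v(s,z):=v_0(s)$ and $w(t,z):=w_0(t)$, the identity $F_\Lambda(ts^{-1},zt^{-1})=u(ts^{-1})=\ip{v(s,z)}{w(t,z)}$ holds, so Corollary~\ref{co:centHSchar} shows that $F_\Lambda$ is a central Herz--Schur $(C_0(G),G,\beta)$-multiplier with $\|F_\Lambda\|_\HS\le\|u\|_{\Mcb A(G)}$; in particular $\Lambda\in\HSconv(G)\cap\HScent(C_0(G),G,\beta)$.

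For the inclusion $\subseteq$, let $\Lambda=\{\mu_t\}_{t\in G}$ lie in the intersection. Centrality supplies $(a_t)_{t\in G}\subseteq C_b(G)$ with $\theta(\mu_t)(f)=a_tf$ in $C_0(G)$ for every $f$, i.e.\ $\int_G f(xs)\,d\mu_t(s)=a_t(x)f(x)$; both sides are continuous in $x$ and $m_G$ has full support, so this holds for every $x$. Substituting $g(s):=f(xs)$, which for fixed $x$ runs over all of $C_0(G)$, turns it into $\int_G g\,d\mu_t=a_t(x)g(e)$; the left-hand side is independent of $x$, which forces $a_t$ to be a constant $c_t$ and hence $\mu_t=c_t\delta_e$. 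Consequently $\cl N(F_\Lambda)(s,t)(f)=c_{ts^{-1}}f$, and $F_\Lambda$ is identified with the ($z$-independent) function $(r,z)\mapsto c_r$ on $G\times G$. Corollary~\ref{co:centHSchar} then gives bounded weakly measurable $v,w:G\times G\to\HHH$ with $c_{ts^{-1}}=\ip{v(s,z)}{w(t,z)}$ for almost all $(s,t,z)$, and $\esssup\|v\|\,\esssup\|w\|=\|\Lambda\|_\HS$; fixing, via Fubini, a point $z_0$ for which equality holds for almost all $(s,t)$ and invoking the scalar characterisation shows that $(s,t)\mapsto c_{ts^{-1}}$ is a Schur multiplier. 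By Theorem~\ref{th:transferencemtt} (trivial system) this means $(c_t)_{t\in G}\in\Mcb A(G)$ with $\|(c_t)\|_{\Mcb A(G)}\le\|\Lambda\|_\HS$; combined with the norm estimate of the first part applied to $u=(c_t)$ (note $\Lambda=\{c_t\delta_e\}_{t\in G}$), we get $\|\Lambda\|_\HS=\|(c_t)\|_{\Mcb A(G)}$, so the identification is isometric.

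I expect the crux to be the rigidity step in the second inclusion: that a convolution multiplier whose associated map is central must be assembled from point masses at $e$. This is precisely where the translation structure of $(C_0(G),G,\beta)$ enters essentially --- centrality forces $\theta(\mu_t)$ to act on $C_0(G)$ as a \emph{multiplication} operator, while $\theta(\mu_t)$ is by construction a \emph{convolution} operator, and only scalar multiples of the identity are of both kinds. The rest is routine but requires some care: upgrading almost-everywhere identities to pointwise ones by continuity, applying Fubini to control exceptional null sets, and tracking norms through Corollary~\ref{co:centHSchar} and Theorem~\ref{th:transferencemtt} so as to reach the isometric statement.
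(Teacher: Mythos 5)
Your proof is correct, and it reaches the same two milestones as the paper's argument --- (a) the rigidity statement that centrality plus the convolution structure forces each $a_r$ to be a constant, so that $\mu_r=c_r\delta_e$, and (b) the identification of scalar-valued central Herz--Schur $(C_0(G),G,\beta)$-multipliers with $\Mcb A(G)$ --- but it implements both differently. For (a), the paper invokes the equivariance identity (\ref{eq_betcom}), $\beta_t\circ\theta(\mu)=\theta(\mu)\circ\beta_t$, to get $\beta_t(a_r)\beta_t(a)=a_r\beta_t(a)$ and hence $\beta_t(a_r)=a_r$ for all $t$; you instead unpack $\theta(\mu_t)$ as the explicit convolution operator $f\mapsto\int_G f(\cdot\,s)\,d\mu_t(s)$ on $C_0(G)$ and test against translates, which yields $\int_G g\,d\mu_t=a_t(x)g(e)$ for all $x$ and hence both the constancy of $a_t$ and the conclusion $\mu_t=c_t\delta_e$ in one stroke. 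For (b), the paper simply cites \cite[Proposition 4.1]{mtt}, whereas you re-derive the fact from Corollary~\ref{co:centHSchar} plus the Bo\.{z}ejko--Fendler transference (Theorem~\ref{th:transferencemtt} for the trivial system), and as a by-product you obtain the isometry $\|\Lambda\|_\HS=\|(c_t)\|_{\Mcb A(G)}$, which the proposition does not even claim. The paper's route is shorter and more conceptual (translation-equivariance forces translation-invariance of the central symbol); yours is more self-contained and explicit. The only points deserving a little extra care in a write-up are the usual ones you already flag: upgrading almost-everywhere identities to pointwise ones, and the fact that a measurable scalar Herz--Schur multiplier agrees almost everywhere with a genuine (continuous) element of $\Mcb A(G)$.
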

\begin{proof}
Suppose that $F : G \to \cbm(C_0(G))$ is a central multiplier which is also a convolution multiplier. Then for each $r \in G$ there is $a_r \in C_b(G)$ such that $F(r)(a) = a_r a$.
Also, since $F$ is a convolution multiplier, by (\ref{eq_betcom}) $F(r)$ satisfies 
\[
    \beta_t \big( F(r)(a) \big) = F(r) \big( \beta_t(a) \big) , \quad r,t \in G ,\ a \in C_0(G) .
\]
Combining these two identities, and allowing $a$ to vary, gives $a_r(st) = a_r(t)$ for all $s,t \in G$, so $a_r$ is a scalar multiple of the identity.
The conclusion follows from \cite[Proposition 4.1]{mtt}.
\end{proof}

\section{Idempotent multipliers}
\label{sec:idemp}

Given standard measure spaces $(X,\mu)$ and $(Y,\nu)$, a well-known open problem
asks for the identification of the idempotent Schur multipliers on $X \times Y$.
A characterisation of the \emph{contractive} idempotent Schur multipliers, based on a combinatorial argument,
combined with an observation of Livshitz \cite{Liv95}, was given by 
Katavolos--Paulsen in \cite{KP05}.

In a similar vein, 
for a general locally compact group $G$, 
there is no known characterisation of the idempotent Herz--Schur multipliers.
Some partial results are known:
the idempotent measures in $M(G)$ of norm one were characterised by 
Greenleaf~\cite{Gr65}
--- a measure $\mu$ has  the properties $\mu\ast\mu = \mu$  and $\|\mu\| = 1$ if and only if $\mu = \gamma m_H$, where $m_H$ is  the Haar measure on a  compact subgroup $H$ and $\gamma$ is a character of $H$. Such $\mu$ is positive if and only if $\gamma$ above is equal to $1$.
Dually, the idempotent elements of $B(G)$ were characterised by Host~\cite{Hos86}; 
using Host's method, Ilie and Spronk \cite{IlSp05} characterised contractive idempotents 
--- a function $u \in B(G)$ has the properties $u^2 = u$ and $\| u \| = 1$ if and only if $u = \chi_C$, where $C$ is an open coset of $G$.
Such $u$ is positive if and only if $C$ is a subgroup of $G$. Stan~\cite{Stan09} extended this characterisation to norm one
idempotent elements of $\Mcb A(G)$. 

In this section we use the aforementioned 
results of Katavolos--Paulsen and Stan to 
study the idempotent central and the idempotent convolution multipliers.

\subsection{Central idempotent multipliers}\label{ssec:idempcent}

We fix standard measure spaces $(X,\mu)$ and $(Y,\nu)$ and a separable, non-degenerate $C^*$-algebra
$A\subseteq \mathcal{B(H)}$.
Suppose $\varphi \in L^{\infty}(X\times Y)$ is an idempotent Schur multiplier, so the map 
$k \mapsto \varphi \cdot k$ on $L^2(Y\times X)$ gives rise to 
a bounded idempotent map $S_\varphi$ on the space of compact operators; 
we have that $\varphi^2(x,y)k(y,x) = \varphi(x,y) k(y,x)$ almost everywhere for all $k\in L^2(Y\times X)$, which implies 
that $\varphi^2 = \varphi$. 
By \cite[Proposition 11]{KP05}, $\varphi = \chi_E$ almost everywhere for some $\omega$-open and $\omega$-closed $E\subseteq X\times Y$. 

Recall from \cite{KP05} that a subset 
$E\subseteq X\times Y$ is said to have the \emph{3-of-4 property} provided that given any distinct pair of points $x_1\neq x_2$ in $X$ and any pair of distinct pairs $y_1\neq y_2$ in $Y$, whenever 3 of the 4 ordered pairs $(x_i,y_j)$ belong to $E$ then 
the fourth one also belongs to $E$.

For a subset $W\subseteq C\times Z$, where $C$ is a set (which will below be equal to either $X$ or $Y$), and an element $z\in Z$, we write 
$W_z = \{t\in C : (t,z)\in W\}$. 
The following result generalises \cite[Theorem 10]{KP05}.

\begin{proposition}\label{pr:idempcentralcharBorels} 
Let $(X,\mu)$ and $(Y,\nu)$ be standard measure spaces and $Z$ a locally compact Hausdorff space.
Let $\varphi:X\times Y\times Z\to \CC$ be a measurable function, continuous in the $Z$-variable. 
The following are equivalent:
\begin{enumerate}[i.]
    \item $\varphi$ is a contractive idempotent central Schur $C_0(Z)$-multiplier;
    \item for each $z \in Z$, there exist families
    $(A_i^z)_{i\in \mathbb{N}}$ and $(B_i^z)_{i\in \mathbb{N}}$ of pairwise disjoint measurable subsets of $X$ and $Y$, respectively, such that     
    $\varphi(x,y,z) = \sum_{i=1}^\infty \chi_{A_i^z}(x) \chi_{B_i^z}(y)$ almost everywhere.
\end{enumerate}
\end{proposition}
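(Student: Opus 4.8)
The plan is to use Theorem~\ref{th_dia} to transfer the problem to each fibre $z\in Z$ and then apply the Katavolos--Paulsen description of contractive idempotent scalar Schur multipliers \cite[Theorem 10]{KP05}. The key reformulation I would establish first is: \emph{$\varphi$ is a contractive idempotent central Schur $C_0(Z)$-multiplier if and only if, for every $z\in Z$, $\varphi_z$ is a contractive idempotent Schur multiplier on $X\times Y$.} The ``contractive central Schur $C_0(Z)$-multiplier'' half of this is precisely the equivalence (i)$\iff$(iii) of Theorem~\ref{th_dia}, together with the norm identity $\|\varphi\|_\Sch=\sup_{z\in Z}\|\varphi_z\|_\Sch$ recorded there. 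For the idempotency half I would use that, under the identification $\cl K\otimes C_0(Z)=C_0(Z,\cl K)$ from the proof of Theorem~\ref{th_dia}, the multiplier map $S_\varphi$ is the map $D_\varphi$ with $D_\varphi(h)(z)=S_{\varphi_z}(h(z))$; hence $D_\varphi^2(h)(z)=S_{\varphi_z}^2(h(z))$, so $S_\varphi^2=S_\varphi$ holds iff $S_{\varphi_z}^2=S_{\varphi_z}$ for all $z$. The nontrivial direction here is obtained by fixing $z_0$, choosing $a\in C_0(Z)$ with $\|a\|=1$ and $a(z_0)=1$, and comparing $D_\varphi^2(T_k\otimes a)$ and $D_\varphi(T_k\otimes a)$ at $z_0$, then using density of $\{T_k : k\in L^2(Y\times X)\}$ in $\cl K$. (Alternatively, one can note that $S_\varphi^2=S_\varphi$ forces $\varphi^2=\varphi$ almost everywhere on $X\times Y\times Z$ via Proposition~\ref{pr:zeroalmosteverywhere}, and then pass to $\varphi_z^2=\varphi_z$ a.e.\ for every $z$ using continuity in the $Z$-variable and the fact that the underlying measure on $Z$ has full support.)

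For (i)$\implies$(ii), granting the reformulation, for each fixed $z$ the function $\varphi_z$ is a contractive idempotent Schur multiplier on $X\times Y$, so \cite[Theorem 10]{KP05} provides pairwise disjoint measurable families $(A_i^z)_{i\in\NN}\subseteq X$ and $(B_i^z)_{i\in\NN}\subseteq Y$ with $\varphi_z(x,y)=\sum_{i=1}^\infty\chi_{A_i^z}(x)\chi_{B_i^z}(y)$ for almost all $(x,y)$; since $\varphi_z(x,y)=\varphi(x,y,z)$ this is exactly (ii). (If \cite[Theorem 10]{KP05} is phrased via the 3-of-4 property, I would first write $\varphi_z=\chi_{E_z}$ a.e.\ for an $\omega$-open, $\omega$-closed set $E_z$ with the 3-of-4 property and then extract the two disjoint families.)

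For (ii)$\implies$(i), I would run the reformulation in reverse: given the families, each $\varphi_z$ is $\{0,1\}$-valued a.e., is a Schur multiplier of norm at most $1$ (by \cite[Theorem 10]{KP05} again, or directly, because at most one summand is nonzero at any point, so $S_{\varphi_z}$ is a contraction), and satisfies $S_{\varphi_z}^2=S_{\varphi_z}$ since $\varphi_z^2=\varphi_z$. Before invoking Theorem~\ref{th_dia} I must check that $\varphi$ is a bounded measurable function on $X\times Y\times Z$, which is part of the hypotheses except for the bound; boundedness by $1$ follows by combining ``$\varphi_z\in\{0,1\}$ a.e.'' for each $z$ with continuity in $z$ and a Fubini argument using that the measure on $Z$ has full support. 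Then Theorem~\ref{th_dia} together with the idempotency reformulation yields (i). The one genuinely delicate point, and where I would be most careful, is exactly this Fubini-type bookkeeping needed to reconcile the fibrewise ``null set of $(x,y)$, for each $z$'' statements with the global hypothesis that $\varphi$ is a single function on $X\times Y\times Z$ that is continuous in the $Z$-variable; everything else is a direct transcription of Theorem~\ref{th_dia} and \cite[Theorem 10]{KP05}.
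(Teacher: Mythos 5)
Your proposal is correct and follows essentially the same route as the paper: both directions reduce to the fibrewise statement via Theorem~\ref{th_dia} and then invoke \cite[Theorem 10]{KP05}. The extra care you take with the idempotency transfer (via the identification $S_\varphi = D_\varphi$) and with boundedness in (ii)$\implies$(i) fills in details the paper leaves implicit ("which is easily seen to be idempotent"), but introduces no new ideas or deviations.
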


\begin{proof}
(i)$\implies$(ii) 
By Theorem~\ref{th_dia}, $\varphi_z$ is a contractive idempotent Schur multiplier for every $z\in Z$.
By \cite[Theorem 10]{KP05}, there exist
families $(A_i^z)_{i=1}^{\infty}$ and $(B_i^z)_{i=1}^{\infty}$
of pairwise disjoint measurable subsets of $X$ and $Y$, respectively, such that
$\varphi_z(x,y) = \sum_{i = 1}^\infty \chi_{A_i^z}(x) \chi_{B_i^z}(y)$ almost everywhere.

(ii)$\implies$(i) 
By \cite[Theorem 10]{KP05},  
$\varphi_z$ is a contractive idempotent Schur multiplier for every $z\in Z$;
thus, by Theorem~\ref{th_dia}, $\varphi$ is a central $C_0(Z)$-multiplier, which is easily seen to be idempotent. 
Since each $\varphi_z$ is contractive we have $\varphi$ is contractive by Theorem~\ref{th_dia}.
\end{proof}

\begin{remark}\label{discrete}\rm 
The statement holds when the standard measure spaces are replaced by discrete spaces $X$ and $Y$ with counting measures, but in this case the families $(A_i^z)_i$, $(B_i^z)_i$ might be uncountable if $X$ or $Y$ is uncountable.  In this case (i) is also equivalent to $\varphi=\chi_W$, where $W_z$ has the 3-of-4 property for each $z\in Z$, see \cite[Lemma 2]{KP05}.
\end{remark}

Let $Z$ be a locally compact Hausdorff space equipped with an action $\alpha$ of a 
locally compact group $G$.
In the subsequent results, we view the set $Z \times G$ as a groupoid as in Section~\ref{ssec:connectionscentral}.
We provide a combinatorial characterisation of 
the contractive central Herz--Schur $(C_0(Z), G,\alpha)$-multipliers.
It is easy to see that in this case $\psi(x,t) = \chi_V(x,t)$ for some subset $V \subseteq Z\times G$.
Theorem \ref{pr:groupoididempHScentralmult} generalises the result of Stan \cite[Theorem 3.3]{Stan09}.

\begin{theorem}\label{pr:groupoididempHScentralmult}
 Assume that
$V \subseteq Z\times G$  is a subset that is both closed and open. 
The following are equivalent:
\begin{enumerate}[i.]
    \item $F_{\chi_V}$ is a contractive central Herz--Schur $(C_0(Z), G,\alpha)$-multiplier;
    \item if $(x,t)$, $(x,s)$, $(xr, r^{-1}s)\in V$ then $(xr, r^{-1}t)\in V$; equivalently, if $(x,t)$, $(y,s)$, $(z,p)\in V$ and the product $(z,p)(y,s)^{-1}(x,t)$ is well defined then $(z,p)(y,s)^{-1}(x,t)\in V$.
\end{enumerate}
In particular, if $V = Z\times A$ for some $ A\subseteq G$ then $A$ is an open coset of $G$.
\end{theorem}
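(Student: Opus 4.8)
The plan is to transfer the problem to the Schur side via the map $\cl{N}$, invoke the combinatorial description of contractive idempotent central Schur $C_0(Z)$-multipliers from Proposition~\ref{pr:idempcentralcharBorels}, and then perform a purely group-theoretic translation of the resulting $3$-of-$4$ condition on $Z\times G$. First I would compute $\cl{N}(F_{\chi_V})$ explicitly. Using the conventions $\alpha_t(a)(x)=a(xt)$ and $F_{\chi_V}(r)(f)(x)=\chi_V(x,r)f(x)$ one obtains, for $a\in C_0(Z)$,
\[
    \cl{N}(F_{\chi_V})(s,t)(a)(x)=\chi_V\big(xt^{-1},ts^{-1}\big)\,a(x),
\]
so that, under the identification of central Schur $C_0(Z)$-multipliers with scalar functions on $G\times G\times Z$, the multiplier $\cl{N}(F_{\chi_V})$ corresponds to $\psi(s,t,x):=\chi_V(xt^{-1},ts^{-1})$. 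Since $V$ is clopen and $(s,t,x)\mapsto(xt^{-1},ts^{-1})$ is jointly continuous, $\psi$ is a bounded, everywhere-continuous, $\{0,1\}$-valued function. By Lemma~\ref{le:Ncompleteisomhom}, $F_{\chi_V}$ is a contractive central Herz--Schur $(C_0(Z),G,\alpha)$-multiplier if and only if $\psi$ is a contractive Schur $C_0(Z)$-multiplier, and being $\{0,1\}$-valued it is then automatically idempotent.

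Next I would apply Proposition~\ref{pr:idempcentralcharBorels} (and, for the reformulation in terms of the $3$-of-$4$ property, \cite[Lemma~2]{KP05}): $\psi$ is a contractive idempotent central Schur $C_0(Z)$-multiplier if and only if for every $x\in Z$ the set
\[
    W_x:=\big\{(s,t)\in G\times G : (xt^{-1},ts^{-1})\in V\big\}
\]
has the $3$-of-$4$ property. The clopenness of $V$, hence the genuine continuity of $\psi_x=\chi_{W_x}$ and genuine clopenness of $W_x$, is what allows one to upgrade the marginally-almost-everywhere statements of \cite{KP05} to statements holding at every point; this is the one genuinely measure-theoretic point, and I expect it to be the main obstacle, although it is routine once one exploits that $W_x$ is clopen and that $G^4$ with Haar product measure has full support.

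It then remains to show that ``$W_x$ has the $3$-of-$4$ property for every $x\in Z$'' is equivalent to (ii). This is bookkeeping: given $s\neq s'$ and $t\neq t'$ in $G$, set $a:=xt^{-1}$, $r:=tt'^{-1}\neq e$, $g:=ts^{-1}$ and $g':=ts'^{-1}$ (so $g\neq g'$); then the four pairs $(s,t),(s,t'),(s',t),(s',t')$ correspond, under $(s,t)\mapsto(xt^{-1},ts^{-1})$, to the four points $(a,g),(ar,r^{-1}g),(a,g'),(ar,r^{-1}g')$ of $Z\times G$, and each of the four implications in the $3$-of-$4$ property becomes an instance of the implication in (ii) (for the two implications whose conclusion has first coordinate $a$ one uses (ii) directly; for the other two one applies (ii) with $x$ replaced by $ar$ and $r$ by $r^{-1}$). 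Conversely, every instance of (ii) with $r\neq e$ and $t\neq s$ arises from such a configuration, obtained by choosing $t\in G$ freely and putting $x=at$, $t'=r^{-1}t$, $s=g^{-1}t$, $s'=g'^{-1}t$; the degenerate cases $r=e$ or $t=s$ in (ii) are trivially true. The equivalence of the two formulations within (ii) is seen by noting that composability of $(z,p)(y,s)^{-1}(x,t)$ forces $y=x$ and $z=x\tilde r$, $p=\tilde r^{-1}s$ with $\tilde r:=sp^{-1}$, and the resulting product equals $(x\tilde r,\tilde r^{-1}t)$.

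For the last assertion I would feed $V=Z\times A$ into (ii): taking $Z\neq\varnothing$ it reads ``$t,s\in A$ and $r^{-1}s\in A$ imply $r^{-1}t\in A$'', i.e.\ (writing $u:=r^{-1}s$) $us^{-1}t\in A$ for all $s,t,u\in A$, that is $AA^{-1}A\subseteq A$. Assuming $A\neq\varnothing$, fix $a_0\in A$ and set $H:=a_0^{-1}A$; then $e\in H$ and $HH^{-1}H\subseteq H$, whence successively $H^{-1}\subseteq H$, $HH^{-1}\subseteq H$ and $HH\subseteq H$, so $H$ is a subgroup, open because $A$ is open, and $A=a_0H$ is an open coset. (Alternatively, for $V=Z\times A$ the function $\psi$ above is the Toeplitz-type function $(s,t)\mapsto\chi_A(ts^{-1})$, so contractivity of $\psi$ amounts, via the Bo\.zejko--Fendler correspondence, to $\chi_A\in\Mcb A(G)$ with $\|\chi_A\|=1$, and one may instead quote Stan's theorem \cite{Stan09}.)
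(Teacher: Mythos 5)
Your proposal follows essentially the same route as the paper: compute $\cl N(F_{\chi_V})(s,t)(a)(x)=\chi_V(xt^{-1},ts^{-1})a(x)$, transfer to the Schur side, reduce fibrewise to the $3$-of-$4$ property of $W_x=\{(s,t):(xt^{-1},ts^{-1})\in V\}$, use clopenness of $V$ to pass between almost-everywhere and everywhere statements, and then do the same combinatorial translation to condition (ii); your variable bookkeeping agrees with the paper's. The final coset claim is handled a bit differently (a direct $AA^{-1}A\subseteq A$ argument rather than reducing to $\chi_A\in\Mcb A(G)$ and quoting \cite{Stan09}), but you also note that alternative, and both work.

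One point is under-specified in the non-discrete case. You identify the a.e.-to-everywhere upgrade (needed for (i)$\implies$(ii)) as ``the one genuinely measure-theoretic point'', and your full-support-of-Haar-measure argument does handle it (the set of failing $4$-tuples is open in $G^4$ because $W_x$ is clopen, and is contained in a null set, hence empty --- equivalent to the paper's sequential argument). But there is a second measure-theoretic point in the converse direction (ii)$\implies$(i): from ``$W_x$ is open and has the everywhere $3$-of-$4$ property'' you must reach the hypothesis of Proposition~\ref{pr:idempcentralcharBorels}, which requires a \emph{countable} family of pairwise disjoint \emph{measurable} rectangles. \cite[Lemma 2]{KP05} only yields an abstract disjoint-rectangle decomposition, possibly uncountable and with non-measurable pieces. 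The paper bridges this by noting that $W_x$, being open, is $\omega$-open, hence marginally equivalent to a countable union of Borel rectangles, and then argues as in \cite[Theorem 10]{KP05} to make the disjoint decomposition countable and Borel. Your sketch should include this step (or an equivalent one) for the argument to be complete; it is the same flavour of argument you already flagged, so it is a small omission rather than a wrong turn.
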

\begin{proof}
Let 
\[
	W = \{(x,s,t)\in Z\times G\times G: (xt^{-1}, ts^{-1})\in V \}.
\]
By Corollary \ref{co:centralmultscharMTT},
$F_{\chi_V}$ is a Herz--Schur $(C_0(Z), G,\alpha)$-multiplier if and only if 
the map $\cl N(F_{\chi_V})$, given by
$$\cl N(F_{\chi_V})(s,t)(a)(x)=\chi_V(xt^{-1}, ts^{-1})a(x)=\chi_W(x,s,t)a(x),$$ 
is a Schur $C_0(Z)$-multiplier.

We first show that condition (ii) is equivalent to $W_z := \{ (s,t) \in G \times G : (z,s,t) \in W \}$ having the 3-of-4 property for all $z \in Z$.
Suppose that $(z,t_1,s_1)$, $(z,t_1,s_2)$ and $(z,t_2,s_2)\in W$, which is equivalent to $(zt_1^{-1}, t_1s_1^{-1})$, $(zt_1^{-1}, t_1s_2^{-1})$, $(zt_2^{-1}, t_2s_2^{-1})\in V$.
Writing $zt_1^{-1} = x$, $t_1s_1^{-1} = t$, $t_1s_2^{-1} = s$ and $t_1t_2^{-1} = r$, 
we get $zt_2^{-1}=xr$,  $t_2s_1^{-1}=r^{-1}t$ and $t_2s_2^{-1}=r^{-1}s$  and hence $(x,t)$, $(x,s)$, $(xr, r^{-1}s)\in V$.
The condition $(z,t_2,s_1)\in W$ is equivalent to $(xr, r^{-1}t)\in V$, giving the statement.
We note that $(z,p)(y,s)^{-1}(x,t)=(z,p)(ys,s^{-1})(x,t)$ is well defined if and only if 
$y = x$ and $z = xsp^{-1}$;
letting $r = sp^{-1}$, we have $(z,p) = (xr,r^{-1}s)$.
We have shown that condition (ii) is equivalent to the 3-of-4 property for each $W_z$.

Assume first that $G$ is a locally compact second countable group and hence $(G,m_G)$ is a standard measure space. 

(i)$\implies$(ii) If (i) holds then $\cl N(F_{\chi_V})$ is a 
contractive idempotent Schur $C_0(Z)$-multiplier. 
By Theorem~\ref{th_dia}, 
$\varphi_z = \chi_{W_z}$ is a contractive idempotent Schur multiplier for each $z\in Z$.
By \cite[Theorem 10]{KP05}, there exist 
countable collections $\{I_m\}$ and $\{J_m\}$ of mutually disjoint Borel subsets of $G$, such that,
if $E=\cup_mI_m\times J_m$, then $\chi_{W_z} = \chi_E$ almost everywhere.
 
As $\chi_{W_z}$ is continuous and hence $\omega$-continuous and $\chi_E$ is $\omega$-continuous, by \cite[Lemma 2.2]{STT11}, $\chi_{W_z}=\chi_E$ marginally almost everywhere. Hence there exists a null set $N_z$ such that $\chi_{W_z}=\chi_E$ on $N_z^c\times N_z^c$. In particular, $W_z\cap(N_z^c\times N_z^c)$ has the  3-of-4 property. 
To see that the whole $W_z$ has the property, take $t_1$, $t_2$, $s_1$, $s_2$ such that $(t_1, s_1)$, $(t_1, s_2)$, $(t_2,s_2)\in W_z$, but some of $t_1$, $s_1$, $t_2$, $s_2$ belong to $N_z$. Using the fact that $W_z$ is open and $m(N_z)=0$ we can find sequences $(t_1^n)_n$,  $(s_1^n)_n$, $(t_2^n)_n$, $(s_2^n)_n$ of elements in $N_z^c$ such that 
$(t_1^n, s_1^n)$, $(t_1^n, s_2^n)$, $(t_2^n,s_2^n)\in W_z$ and $t_i^n\to t_i$, $s_i^n\to s_i$, $i=1,2$. Hence $(t_2^n,s_1^n)\in W_z$, and as $1=\chi_{W_z}(t_2^n,s_1^n)\to\chi_{W_z}(t_2,s_1)$, we obtain that $(t_2,s_1)\in W_z$.  Hence (ii) holds.

(ii)$\implies$(i)
 As $W_z$ is open and hence $\omega$-open, $W_z$ is marginally equivalent to a countable union of Borel rectangles. Hence $W_z\cap (N_z^c\times N_z^c)=\cup_{m=1}^\infty A_m^z\times B_m^z$, where $m_G(N_z)=0$ and each $A_m^z\times B_m^z$ is Borel. 
By \cite[Lemma 2]{KP05} and the second paragraph in the proof, $W_z$ and hence $W_z\cap (N_z^c\times N_z^c)$ has the 3-of-4 property
for each $z \in Z$ and there exist families $\{ X_i^z \}_{i\in I}$ and $\{ Y_i^z \}_{i \in I}$ of pairwise disjoint sets of $G$, such that 
$W_z\cap (N_z^c\times N_z^c) = \cup_{i \in I} X_i^z \times Y_i^z$.
Arguing as in the proof of \cite[Theorem 10]{KP05} one shows that the index set $I$ can be chosen countable and each $X_i^z \times Y_i^z$ is a Borel rectangle. Hence $\chi_{W_z}$ is a contractive Schur multiplier. 
By Proposition~\ref{pr:idempcentralcharBorels} $\chi_W$ is a contractive idempotent central Schur multiplier, so $\chi_V$ is a contractive idempotent central Herz--Schur $(C_0(Z), G,\alpha)$-multiplier. 

If $G$ is discrete, the statement follows from Remark \ref{discrete}. 
Finally, if $V= Z \times A$ then $\chi_V(x,t)=\chi_A(t)$ which is a Herz--Schur $(C_0(Z),G,\alpha)$-multiplier if and only if $\chi_A$ is a Herz--Schur multiplier. It is of norm at most 1 if and only if $A$ is an open coset of $G$.
\end{proof}

\begin{remark}\label{re:3of4transfgroupoid}
{\rm
It follows from Proposition~\ref{pr:groupoididempHScentralmult} that if $F_{\chi_V}$ is a contractive Herz--Schur $(C_0(Z),G,\alpha)$-multiplier and the points $(x,t)$, $r((x,t))=(x,e)$ and $d((x,t))=(xt,e)$ all belong to $V$ 
then $(x,t)^{-1}=(xt,t^{-1})\in V$.
Moreover, if $(x,t)$, $d((x,t))=(xt,e)$ and $(xt,s)\in V$ then $(x,t)(xt,s)=(xt,ts)\in V$.
}
\end{remark}

The following corollary is an immediate consequence of Remark~\ref{re:3of4transfgroupoid}. 

\begin{corollary}
With the notation of Theorem \ref{pr:groupoididempHScentralmult}, 
assume that $\cl G_0\subseteq V$. We have that $F_{\chi_V}$ is a contractive 
Herz--Schur $(C_0(Z),G,\alpha)$-multiplier if and only if $V$ is a subgroupoid of $\cl G$. 
\end{corollary}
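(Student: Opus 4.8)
The plan is to apply Theorem~\ref{pr:groupoididempHScentralmult} and then translate the 3-of-4 type condition (ii) into the statement that $V$ is a subgroupoid, under the extra hypothesis $\cl G_0 \subseteq V$. Recall that $\cl G_0 = \{(x,e) : x \in Z\}$ is the unit space, canonically identified with $Z$. A subgroupoid of $\cl G = Z \times G$ is a subset $V$ that contains $\cl G_0$ (since we are assuming this) and is closed under the partially defined product and under inversion; so the content of the corollary is that, given $\cl G_0 \subseteq V$, condition (ii) of Theorem~\ref{pr:groupoididempHScentralmult} is equivalent to closure under product and inverse.

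First I would prove the forward direction: assume $F_{\chi_V}$ is a contractive Herz--Schur $(C_0(Z),G,\alpha)$-multiplier and $\cl G_0 \subseteq V$. By Theorem~\ref{pr:groupoididempHScentralmult}, condition (ii) holds: whenever $(x,t)$, $(y,s)$, $(z,p) \in V$ and $(z,p)(y,s)^{-1}(x,t)$ is defined, it lies in $V$. To get closure under inversion, take $(x,t) \in V$; then $r((x,t)) = (x,e) \in \cl G_0 \subseteq V$ and $d((x,t)) = (xt,e) \in \cl G_0 \subseteq V$ as well, so by Remark~\ref{re:3of4transfgroupoid} (the first assertion there) we get $(x,t)^{-1} = (xt,t^{-1}) \in V$. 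To get closure under product, suppose $(x,t)$ and $(y,s)$ are composable, i.e.\ $y = xt$; then $(x,t) \in V$, $d((x,t)) = (xt,e) \in V$, and $(xt,s) = (y,s) \in V$, so the second assertion of Remark~\ref{re:3of4transfgroupoid} gives $(x,t)(xt,s) = (xt,ts) \in V$, which is exactly $(x,t)(y,s) \in V$. Together with $\cl G_0 \subseteq V$, this shows $V$ is a subgroupoid of $\cl G$.

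For the converse, assume $V$ is a subgroupoid of $\cl G$; I would verify condition (ii) of Theorem~\ref{pr:groupoididempHScentralmult} directly. Suppose $(x,t)$, $(y,s)$, $(z,p) \in V$ and the product $g := (z,p)(y,s)^{-1}(x,t)$ is well defined. Since $V$ is closed under inversion, $(y,s)^{-1} \in V$; since it is closed under (defined) products, $(z,p)(y,s)^{-1} \in V$ and then $g = \bigl((z,p)(y,s)^{-1}\bigr)(x,t) \in V$. Hence (ii) holds, so by Theorem~\ref{pr:groupoididempHScentralmult} $F_{\chi_V}$ is a contractive central Herz--Schur $(C_0(Z),G,\alpha)$-multiplier. (One should note that the hypothesis that $V$ is both open and closed in Theorem~\ref{pr:groupoididempHScentralmult} is in force throughout, as stated at the start of that theorem.)

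The only mildly delicate point — and the one I would be most careful about — is the bookkeeping in the composability conditions: checking that the products appearing in Remark~\ref{re:3of4transfgroupoid} and in condition (ii) are genuinely defined, i.e.\ that the domain/range constraints $d(g_1) = r(g_2)$ are exactly the relations $y = xt$, etc., used above. This is pure groupoid arithmetic with the formulas $d((x,t)) = (xt,e)$, $r((x,t)) = (x,e)$ and $(x,t)^{-1} = (xt,t^{-1})$ from Section~\ref{ssec:connectionscentral}, so it is routine but worth writing out once to be sure the identifications line up. There is no analytic obstacle here: all the hard analytic work is already packaged into Theorem~\ref{pr:groupoididempHScentralmult} and Remark~\ref{re:3of4transfgroupoid}.
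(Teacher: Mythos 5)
Your proposal is correct and follows exactly the route the paper intends: the corollary is stated there as an immediate consequence of Remark~\ref{re:3of4transfgroupoid}, and you have simply written out the two directions (closure under inversion and products from the 3-of-4 condition plus $\cl G_0\subseteq V$; condition (ii) of Theorem~\ref{pr:groupoididempHScentralmult} from the subgroupoid axioms). The only blemish is that you copy the paper's typo $(x,t)(xt,s)=(xt,ts)$ — the product should read $(x,ts)$ by the definition $(x_1,t_1)\cdot(x_2,t_2)=(x_1,t_1t_2)$ — but this does not affect the argument.
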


\subsection{Positive central idempotent multipliers}
\label{ssec:poscentridemps}

The following description of positive contractive Schur multipliers can be obtained in a similar manner to 
\cite[Theorem 10]{KP05}, and we omit its proof. 

\begin{proposition}\label{th:positive_contractive}
Let $(X,\mu)$ be a standard measure space and $E\subseteq X\times X$.  
The following are equivalent:
\begin{enumerate}[i.]
	\item $\chi_E$ is a positive contractive Schur multiplier;
	\item $E$ is equivalent, with respect to product measure, 
	to a subset of the form $\cup_{m=1}^{\infty} I_m\times I_m$, where $\{I_m\}_{m=1}^{\infty}$
	is a collection of disjoint Borel subset of $X$.
\end{enumerate}
\end{proposition}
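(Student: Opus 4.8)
The plan is to follow the scheme of the proof of \cite[Theorem 10]{KP05}, but keeping track of positivity throughout. Recall that for a scalar Schur multiplier $\varphi \in L^\infty(X \times X)$ the associated map $S_\varphi$ is completely positive if and only if $S_\varphi$ is positive (since $S_\varphi$ acts on compact operators on a single Hilbert space by a Schur-type product), and that $\chi_E$ being a contractive idempotent forces $E$ to be marginally equivalent to an $\omega$-open and $\omega$-closed set with the 3-of-4 property by \cite[Proposition 11]{KP05} and \cite[Lemma 2]{KP05}. So throughout I will assume $E$ already has these regularity properties, and the extra input I need is the consequence of \emph{positivity} of $S_{\chi_E}$.

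First I would establish (ii)$\implies$(i). If $E$ is (product-measure) equivalent to $\cup_{m=1}^\infty I_m \times I_m$ with the $I_m$ pairwise disjoint Borel sets, then $\chi_E(x,y) = \sum_m \chi_{I_m}(x)\chi_{I_m}(y)$ almost everywhere. Writing $v : X \to \ell^2(\mathbb{N})$ by $v(x) = (\chi_{I_m}(x))_m$, disjointness gives $\|v(x)\| \le 1$ a.e.\ and $\chi_E(x,y) = \langle v(x), v(y)\rangle$ a.e.; by the scalar version of Theorem~\ref{th:Schurmultsmtt} (the $A=\mathbb{C}$ case, i.e.\ the theorem on scalar Schur multipliers in the excerpt, condition (iii) with $v=w$) this exhibits $S_{\chi_E}$ as $T \mapsto V^* \rho(\cdot) V$ with a single Hilbert space on both sides and $\rho$ a representation, so $S_{\chi_E}$ is completely positive and contractive. (One checks it is idempotent directly, since $\chi_E^2 = \chi_E$.)

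For (i)$\implies$(ii), I would start from the decomposition produced by \cite[Theorem 10]{KP05}: contractivity and idempotency give pairwise disjoint Borel families $(A_i)$, $(B_i)$ with $E$ marginally equivalent to $\cup_i A_i \times B_i$. The point is to use positivity to force $A_i$ and $B_i$ to coincide (up to null sets) after a suitable rearrangement. Concretely, positivity of $S_{\chi_E}$ means that for any $k \ge 1$ and any measurable $C_1,\dots,C_k \subseteq X$ of finite positive measure, the matrix $\big( (\mu\times\mu)(E \cap (C_p \times C_q)) \big)_{p,q}$ — obtained by testing $S_{\chi_E}$ against the rank-one-type operator with kernel $\sum_p \chi_{C_p} \otimes \chi_{C_p}$ — must be positive semidefinite; equivalently the $\omega$-open set $E$, viewed as a symmetric "graph-like" relation, must be such that for all $x,y$ in (the marginal-a.e.\ part of) $X$, $(x,y) \in E$ implies $(y,x) \in E$ and $(x,x) \in E$, i.e.\ $E$ is marginally equivalent to a symmetric, reflexive-on-its-support relation. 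Combined with the 3-of-4 property, such a relation is an equivalence relation on $X' := \{x : (x,x) \in E\}$ (up to marginally null modification), and then the $\omega$-open equivalence classes are exactly the sets $I_m$: one matches up the $A_i$'s and $B_i$'s by noting $A_i \cap B_i$ is the "diagonal block" and the off-diagonal pieces are killed by symmetry together with disjointness.

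The main obstacle I expect is the careful bookkeeping of null sets versus marginally null sets when transferring between the $\omega$-topological statements (where $E$ genuinely has the 3-of-4 property and is symmetric everywhere off a marginally null set) and the product-measure statement demanded in (ii). The tool for this is exactly Proposition~\ref{pr:zeroalmosteverywhere}(iii) (or its scalar antecedent \cite[Theorem 7]{KP05}) together with the $\omega$-continuity of $\chi_E$: since $\chi_E$ and the candidate $\sum_m \chi_{I_m} \otimes \chi_{I_m}$ are both $\omega$-continuous, agreement a.e.\ upgrades to agreement marginally a.e., and then to a genuine set equality after removing a marginally null rectangle, from which the "equivalence with respect to product measure" in (ii) follows. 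Since the statement says "we omit its proof," in the final text I would simply remark that the argument is the positivity-bookkeeping refinement of \cite[Theorem 10]{KP05} sketched above and leave the details to the reader.
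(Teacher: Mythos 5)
The paper itself omits the proof, saying only that it ``can be obtained in a similar manner to \cite[Theorem 10]{KP05}''; your proposal is exactly that kind of refinement, so the overall route matches the authors' intent, and your (ii)$\implies$(i) direction is complete and correct. In (i)$\implies$(ii), however, there is one concrete error in the step that extracts the positivity information. The operator whose kernel is $\sum_p \chi_{C_p}\otimes\chi_{C_p}$ is $\sum_p \chi_{C_p}\chi_{C_p}^*$; pairing $S_{\chi_E}$ of it against $\eta=\sum_q c_q\chi_{C_q}$ produces only $\sum_p |c_p|^2\,(\mu\times\mu)\bigl(E\cap(C_p\times C_p)\bigr)$, i.e.\ the diagonal entries alone, which is automatically nonnegative and gives no information. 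To obtain positive semidefiniteness of the full matrix $\bigl((\mu\times\mu)(E\cap(C_p\times C_q))\bigr)_{p,q}$ for pairwise disjoint $C_1,\dots,C_k$ you must test against the rank-one positive operator $ff^*$ with $f=\sum_p\chi_{C_p}$, whose kernel is $\sum_{p,q}\chi_{C_p}\otimes\chi_{C_q}$. With that correction the matrix condition does hold, and in particular a vanishing diagonal entry kills its row.

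The second issue is that the place you wave at as ``bookkeeping'' --- passing from the measure-theoretic positivity condition to pointwise reflexivity of $E$ on its support, and then matching the $A_i$ with the $B_i$ --- is really the crux, and the detour through the diagonal (a product-null set) is more delicate than you suggest. It can be avoided entirely: starting from the decomposition $E\sim\cup_i A_i\times B_i$ of \cite[Theorem 10]{KP05} with each family pairwise disjoint and each set of positive measure, apply the $2\times 2$ positive semidefiniteness condition to $C_1=A_i\setminus B_i$ and $C_2=B_i$. Since $E\cap(C_1\times C_1)\subseteq\cup_j (A_j\cap C_1)\times(B_j\cap C_1)$ is null (the $j\neq i$ terms vanish by disjointness of the $A_j$, and $B_i\cap C_1=\emptyset$), the off-diagonal entry $(\mu\times\mu)(E\cap(C_1\times C_2))=\mu(A_i\setminus B_i)\,\mu(B_i)$ must vanish, so $A_i\subseteq B_i$ up to a null set; the symmetry of $E$ modulo product-null sets (which does follow from your matrix condition) gives the reverse inclusion, and setting $I_i:=A_i$ yields (ii). Note also that symmetry and the 3-of-4 property alone do not suffice: $E=(P\times Q)\cup(Q\times P)$ with $P,Q$ disjoint is a symmetric contractive idempotent satisfying 3-of-4 but is not positive, so the ``reflexivity'' step you sketch is genuinely needed and should be carried out as above rather than left implicit.
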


\begin{remark}\rm
The standard measure space $(X,\mu)$ can be replaced by discrete space $X$ with counting measure. In this case the collection of disjoint subsets of $X$ might be uncountable. 
\end{remark}
The following positive version of Proposition \ref{pr:idempcentralcharBorels} and its discrete version
can be proved using similar ideas, and we omit the detailed argument.

\begin{proposition}\label{p_cpics} 
Let $(X,\mu)$ and $(Y,\nu)$ be standard measure spaces and $Z$ a locally compact Hausdorff space.
Let $\varphi:X\times Y\times Z\to \CC$ be a measurable function which is continuous in the $Z$-variable. 
The following are equivalent:
\begin{enumerate}[i.]
    \item $\varphi$ is a positive contractive idempotent central Schur $C_0(Z)$-multiplier;
    \item for each $z \in Z$, there exists a family
    $(A_i^z)_i$ of pairwise disjoint measurable subsets of $X$, such that     
    $\varphi(x,y,z) = \sum_{i=1}^\infty \chi_{A_i^z}(x) \chi_{A_i^z}(y)$ almost everywhere.
\end{enumerate}
\end{proposition}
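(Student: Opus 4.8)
The plan is to pass to the fibres $\varphi_z(x,y)=\varphi(x,y,z)$ and then quote the scalar-valued results, in complete analogy with the proof of Proposition~\ref{pr:idempcentralcharBorels}, but using Proposition~\ref{th:positive_contractive} in place of \cite[Theorem 10]{KP05}. The ingredients are Theorem~\ref{th_poscentral}, which characterises positive central Schur $C_0(Z)$-multipliers through the positivity of every $\varphi_z$ together with $\sup_{z\in Z}\|\varphi_z\|_\Sch<\infty$; the norm identity $\|\varphi\|_\Sch=\sup_{z\in Z}\|\varphi_z\|_\Sch$ from Theorem~\ref{th_dia}; and Proposition~\ref{th:positive_contractive} on positive contractive scalar Schur multipliers.

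First I would prove (i)$\Rightarrow$(ii). If $\varphi$ is a positive, contractive, idempotent central Schur $C_0(Z)$-multiplier, then Theorem~\ref{th_poscentral} together with the norm identity of Theorem~\ref{th_dia} shows that each $\varphi_z$ is a positive Schur multiplier with $\|\varphi_z\|_\Sch\le\|\varphi\|_\Sch\le1$, that is, a positive contractive Schur multiplier. Using the identity $S_\varphi(K)(z)=S_{\varphi_z}(K(z))$ from the proof of Theorem~\ref{th_dia}, and the fact that an element of $C_0(Z,\cl K)$ is determined by its point values, idempotency of $S_\varphi$ forces $S_{\varphi_z}$ to be idempotent for every $z$; as in the paragraph preceding Proposition~\ref{pr:idempcentralcharBorels}, this gives $\varphi_z^2=\varphi_z$ almost everywhere, hence $\varphi_z=\chi_{E_z}$ almost everywhere for a measurable $E_z\subseteq X\times X$. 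Since $\chi_{E_z}$ is a positive contractive Schur multiplier, Proposition~\ref{th:positive_contractive} supplies a pairwise disjoint family $(A_i^z)_i$ of Borel subsets of $X$ with $E_z$ equivalent, with respect to product measure, to $\bigcup_i A_i^z\times A_i^z$; therefore $\varphi(x,y,z)=\sum_i\chi_{A_i^z}(x)\chi_{A_i^z}(y)$ almost everywhere, which is (ii).

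For (ii)$\Rightarrow$(i): given such families, each $\varphi_z$ equals $\chi_{E_z}$ with $E_z=\bigcup_i A_i^z\times A_i^z$ up to a null set, so Proposition~\ref{th:positive_contractive} shows that every $\varphi_z$ is a positive contractive Schur multiplier; in particular $\sup_{z\in Z}\|\varphi_z\|_\Sch\le1$. Theorem~\ref{th_poscentral} then yields that $\varphi$ is a positive central Schur $C_0(Z)$-multiplier; it is contractive by the norm identity of Theorem~\ref{th_dia}, and it is idempotent because each $S_{\varphi_z}$ is, again via $S_\varphi(K)(z)=S_{\varphi_z}(K(z))$. The discrete analogue follows the same route from the discrete forms of Proposition~\ref{th:positive_contractive} and Theorem~\ref{th_poscentral}, with the families $(A_i^z)_i$ allowed to be uncountable.

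The single point that deserves a careful argument, and which I would spell out, is the equivalence between idempotency of $S_\varphi$ on $C_0(Z,\cl K)$ and simultaneous idempotency of all the maps $S_{\varphi_z}$ on $\cl K$; this is immediate from the identification of $S_\varphi$ with the diagonal map $D_\varphi$ established in the proof of Theorem~\ref{th_dia} and from the fact that evaluation at the points of $Z$ separates the elements of $C_0(Z,\cl K)$. Everything else is a direct chaining of the quoted statements, so I do not anticipate any further obstacle.
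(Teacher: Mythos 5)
Your proposal is correct and follows precisely the route the paper itself indicates: the authors omit the detailed argument, stating only that the result ``can be proved using similar ideas'' to Proposition~\ref{pr:idempcentralcharBorels} with Proposition~\ref{th:positive_contractive} replacing the Katavolos--Paulsen theorem, and your write-up (fibre-wise reduction via Theorem~\ref{th_poscentral}, the norm identity of Theorem~\ref{th_dia}, and the pointwise idempotency of the maps $S_{\varphi_z}$ through the identification of $S_\varphi$ with the diagonal map) is exactly that intended argument, with the one genuinely delicate point --- transferring idempotency between $S_\varphi$ and the family $(S_{\varphi_z})_{z\in Z}$ --- correctly identified and justified.
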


Proposition \ref{pr:idempcentralcharBorels} and the transference theorem of \cite{mtt} give an 
implicit characterisation of the positive central idempotent Herz-Schur 
$(C_0(Z),G,\alpha)$-multipliers. In Theorem \ref{th_pchs} below, 
we give a more direct description of the positive central idempotent Herz-Schur multipliers
of norm not exceeding 1.

\begin{theorem}\label{th_pchs}
Let 
 $(C_0(Z), G, \alpha)$ be a $C^*$-dynamical system and 
$V\subseteq Z\times G$ be a closed and open subset. 
The following are equivalent:
\begin{enumerate}[i.]
	\item $F_{\chi_V}$ is a positive, contractive Herz--Schur $(C_0(Z), G, \alpha)$-multiplier;
	\item $V$ is a subgroupoid of $Z \times G$.
\end{enumerate}
\end{theorem}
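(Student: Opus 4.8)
The plan is to deduce this from Theorem~\ref{pr:groupoididempHScentralmult} together with the positive version of the Katavolos--Paulsen machinery recorded in Proposition~\ref{p_cpics} (and its discrete analogue), exactly in parallel with the way Theorem~\ref{pr:groupoididempHScentralmult} was proved from Proposition~\ref{pr:idempcentralcharBorels}. First I would translate the problem through the map $\cl N$: writing $W=\{(x,s,t)\in Z\times G\times G:(xt^{-1},ts^{-1})\in V\}$ as in the proof of Theorem~\ref{pr:groupoididempHScentralmult}, the statement ``$F_{\chi_V}$ is a positive, contractive Herz--Schur $(C_0(Z),G,\alpha)$-multiplier'' is equivalent, by Theorem~\ref{th:poscentraltransference} and Lemma~\ref{le:Ncompleteisomhom}, to ``$\chi_W$ is a positive, contractive idempotent central Schur $C_0(Z)$-multiplier.'' Applying Proposition~\ref{p_cpics} (when $G$ is second countable; the discrete case is handled by the stated discrete analogue), this holds iff for each $z\in Z$ the section $W_z=\{(s,t):(z,s,t)\in W\}$ is, up to a marginally null set, of the form $\bigcup_i A_i^z\times A_i^z$ with the $A_i^z$ pairwise disjoint --- equivalently, $W_z$ has the \emph{symmetric} 3-of-4 property: it has the 3-of-4 property and is ``symmetric'' in the sense that membership is detected by a partition into squares.

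Next I would unwind what the square-partition condition on $W_z$ says about $V$. The computation in the proof of Theorem~\ref{pr:groupoididempHScentralmult} already shows that the plain 3-of-4 property of each $W_z$ is equivalent to condition (ii) there, namely closure of $V$ under the partial groupoid operation $(z,p)(y,s)^{-1}(x,t)$. The additional positivity/symmetry input should upgrade this to: $V$ is closed under inverses and contains the relevant units, hence is a subgroupoid. Concretely, I would argue: the square structure of $W_z$ forces the relation on $G$ given by ``$(s,t)\in W_z$'' to be symmetric and transitive on its domain; translating via $x=zt^{-1}$, $t\mapsto ts^{-1}$ etc. as in the earlier proof, symmetry of $W_z$ becomes the statement that $(x,t)\in V$ implies $(x,t)^{-1}=(xt,t^{-1})\in V$, and the diagonal blocks $A_i^z\times A_i^z$ force the units $r((x,t))=(x,e)$ and $d((x,t))=(xt,e)$ to lie in $V$ whenever $(x,t)\in V$. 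Combined with the 3-of-4/partial-composition closure already available, and with Remark~\ref{re:3of4transfgroupoid}, this gives exactly that $V$ is closed under the groupoid operations, i.e.\ a subgroupoid. For the converse, if $V$ is a subgroupoid then in particular $\cl G_0\cap(\text{image})\subseteq V$ in the appropriate sense, each $W_z$ becomes an equivalence relation on its domain and hence a disjoint union of squares, so Proposition~\ref{p_cpics} applies and $\chi_W$ is a positive contractive idempotent central Schur $C_0(Z)$-multiplier; running the $\cl N$-correspondence backwards yields (i).

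The main obstacle I anticipate is the careful bookkeeping in the ``marginally null exceptional set'' argument: as in the proof of Theorem~\ref{pr:groupoididempHScentralmult}, Proposition~\ref{p_cpics} only gives the square decomposition of $W_z$ up to a product-null (hence, by $\omega$-continuity and \cite[Lemma 2.2]{STT11}, marginally null) set $N_z\subseteq G$, and one must promote the symmetric 3-of-4 property from $W_z\cap(N_z^c\times N_z^c)$ to all of $W_z$ using openness of $W_z$, joint continuity of the action, and a density/limit argument along sequences in $N_z^c$ --- taking care that the limiting points genuinely land back in the open set $V$. A secondary, more cosmetic point is checking that the partial-operation formulation $(z,p)(y,s)^{-1}(x,t)\in V$ together with inverse-closure and unit-membership is literally equivalent to ``$V$ is a subgroupoid of $Z\times G$'' for the transformation groupoid structure fixed in Section~\ref{ssec:connectionscentral}; this is routine but should be spelled out. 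Finally I would note the special case: if $V=Z\times A$ then $V$ is a subgroupoid of $Z\times G$ iff $A$ is a subgroup of $G$, recovering the positive case of Stan's theorem and matching the last sentence of Theorem~\ref{pr:groupoididempHScentralmult}.
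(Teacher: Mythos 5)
Your proposal is correct and follows essentially the same route as the paper's proof: transfer via $\cl N$ to the sections $W_z$, use positivity to force symmetry and diagonal (unit) membership in $V$, combine this with Remark~\ref{re:3of4transfgroupoid} and the contractive 3-of-4 machinery to get closure under inverses and products, and handle the exceptional null sets exactly as you anticipate, via $\omega$-continuity, openness and a limiting argument. The only cosmetic difference is that in the forward direction the paper extracts unit membership directly from the Gram-vector representation $\chi_{W_z}(s,t)=\ip{v_z(s)}{v_z(t)}$ supplied by Theorem~\ref{th_poscentral}, using Cauchy--Schwarz, rather than routing through the square decomposition of Proposition~\ref{p_cpics}; the two are interchangeable here.
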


\begin{proof}
We will prove the theorem for $G$ a locally compact second countable group. The case when $G$ is discrete can be treated in a similar but simpler way.

(i)$\implies$(ii) 
Let 
$$W = \{(z,s,t)\in Z\times G\times G: (zt^{-1}, ts^{-1})\in V\}.$$
If $F_{\chi_V}$ is a positive contractive Herz--Schur $(C_0(Z), G, \alpha)$-multiplier then 
the function $\cl N(F_{\chi_V})$, given by
$\cl N(F_{\chi_V})(s,t)(a)(z) = \chi_W(z,s,t)a(z)$, is a positive Schur $C_0(Z)$-multiplier.
By Theorem~\ref{th_poscentral}, $\chi_{W_z}$ is a positive Schur multiplier for each $z\in Z$.
Note also that, as it is continuous, it is $\omega$-continuous. 
Using \cite[Lemma 2.2]{STT11}, we see that 
there exist a weakly measurable function $v_z : G \to \ell^2$ and a null set 
$N_z\subseteq G$ such that
\[
	\chi_{W_z}(s,t) = \ip{v_z(s)}{v_z(t)}, \quad s,t\notin N_z.
\]
Let $(x,t)\in V$; as in Remark~\ref{re:3of4transfgroupoid}, 
it suffices 
to show that $(x,e)$ and $(xt,e)\in V$. Assume that $(x,e)\notin V$, and note that
\[
	\chi_V(x,e) = \chi_V((xt)t^{-1}, t t^{-1})=\chi_W(xt,t,t) \text{ and } \chi_V(x,t)=\chi_W(xt,e,t).
\]
If $t\notin N_{xt}$ and $e\notin N_{xt}$ then 
\[
	\chi_W(xt,t,t) = \| v_{xt}(t) \|_2^2 = 0 \text{ and } \chi_W(xt,e,t) = \ip{v_{xt}(e)}{v_{xt}(t)} =0,
\]
giving a contradiction.
If one or both of $e$ or $t$ are in $N_{xt}$, say $t\in N_{xt}$ but $e\notin N_{xt}$, then, as $m(N_{xt})=0$ there exists a sequence $s_n\notin N_{xt}$ such that $s_n\to t$. 
As $\chi_W$ is continuous, we obtain
\[
	\| v_{xt}(s_n) \|_2^2 = \chi_W(xt,s_n,s_n) \to \chi_W(xt,t,t) = 0 ,
\]
while 
\[
	\ip{v_{xt}(e)}{v_{xt}(s_n)} = \chi_W(xt,e,s_n) \to \chi_W(xt,e,t) , 
\]
forcing $\chi_W(xt,e,t)=0$, a contradiction. 
The other cases are treated similarly. 
To see that $(xt,e)\in V$ observe that
\[
	\chi_V(xt,e)=\chi_W(x,t^{-1},t^{-1}) \text{ and }\chi_V(x,t)=\chi_W(x,t^{-1},e)
\]
and apply similar analysis. 

(ii)$\implies$(i) Let now $V$ be an open subgroupoid. 
Arguing as in the proof of Proposition~\ref{pr:groupoididempHScentralmult} we see that 
$W_z$ has the 3-of-4 property for each $z\in Z$. 
Moreover, if $(x,s,t)\in W$ we have that $(xt^{-1}, ts^{-1})\in V$ and hence $r(xt^{-1}, ts^{-1})=(xt^{-1},e)\in V$ and $d(xt^{-1}, ts^{-1})=(xs^{-1},e)\in V$, implying $(x,t,t)\in W$ and $(x,s,s)\in W$. Therefore the projections $W_z^1$ and $W_z^2$ of $W_z$ 
on the first and the second coordinates are equal and $\{(s,s): s\in W_z^1\} \subseteq W_z$. 
It follows easily now that for each $z\in Z$ there exists disjoint sets $\{X_t^z\}_{t\in T}$ such that $W_z=\cup_{t\in T}X_t^z\times X_t^z$. Arguing as in \cite[Theorem 10]{KP05}, there is a Borel subset $N_z$, $m_G(N_z)=0$ such that $(X_t^z\cap N_z^c)\times (X_t^z\cap N_z^c)$ is a Borel rectangle and $W_z\cap(N_z^c\times N_z^c)$ is a countable union of $(X_t^z\cap N_z^c)\times (X_t^z\cap N_z^c)$. 
By Proposition~\ref{th:positive_contractive}  $\chi_{W_z}$ is a positive contractive Schur multiplier. 
Therefore $\chi_W$ is a positive contractive Schur $C_0(Z)$-multiplier by Theorem~\ref{th_poscentral}. 
\end{proof}

\subsection{Idempotent convolution multipliers}
\label{ssec:idempconv}

We next provide some 
examples of idempotent convolution multipliers. 
The following is immediate from Theorem \ref{conv_ab} and \cite[Theorem 2.1]{IlSp05}.

\begin{corollary}\label{co:idempconvcoset}
Suppose $G$ is an abelian locally compact group and $W\subseteq G\times \Gamma$ is a 
measurable set, such that $\chi_W \in \mathfrak{S}_\text{\rm conv}^\id$. 
Then $\| \chi_W \|_\Sch \leq 1$ if and only if $W$ is an open coset of $G \times \Gamma$ .
\end{corollary}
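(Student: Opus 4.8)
The plan is to combine Theorem~\ref{conv_ab}, which identifies $\HSconv^\id(G)$ isometrically with $B(G\times\Gamma)$, with the Ilie--Spronk characterisation of contractive idempotents in the Fourier--Stieltjes algebra. First I would observe that $\chi_W$ is an idempotent element of $\HSconv^\id(G)$: since $\chi_W^2 = \chi_W$ as a function on $G\times\Gamma$, and the identification in Theorem~\ref{conv_ab} is an algebra homomorphism, $\chi_W$ corresponds to an idempotent in $B(G\times\Gamma)$. Moreover, under this identification the Herz--Schur norm $\|\chi_W\|_\Sch$ equals $\|\chi_W\|_{B(G\times\Gamma)}$. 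Hence the condition $\|\chi_W\|_\Sch \le 1$ is equivalent to $\chi_W$ being a contractive idempotent in $B(G\times\Gamma)$.

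Next I would invoke \cite[Theorem 2.1]{IlSp05}: a function $u\in B(H)$ on a locally compact group $H$ satisfies $u^2 = u$ and $\|u\| = 1$ if and only if $u = \chi_C$ for an open coset $C$ of $H$. Applying this with $H = G\times\Gamma$ (which is a locally compact abelian, hence in particular locally compact, group), we get that $\|\chi_W\|_{B(G\times\Gamma)} \le 1$ --- which, since a nonzero idempotent in $B(H)$ has norm at least $1$, means either $W$ is marginally/measure-theoretically trivial or $\|\chi_W\|_{B(G\times\Gamma)} = 1$ --- forces $W$ to coincide (up to the relevant notion of equality) with an open coset of $G\times\Gamma$. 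Conversely, if $W$ is an open coset then $\chi_W \in B(G\times\Gamma)$ with $\|\chi_W\|_{B(G\times\Gamma)} = 1$ by the same theorem, so $\|\chi_W\|_\Sch = 1 \le 1$ by Theorem~\ref{conv_ab}.

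The main point requiring care is the bookkeeping between the a.e.-defined measurable set $W$ appearing in the hypothesis $\chi_W\in\HSconv^\id(G)$ and the honestly-defined open coset produced by \cite[Theorem 2.1]{IlSp05}: one needs that an element of $B(G\times\Gamma)$ which agrees almost everywhere with $\chi_W$ is in fact equal to $\chi_C$ for an open coset $C$, and that $W$ is then equal to $C$ up to a locally null set (indeed, as in the proof of Theorem~\ref{conv_ab}, separate continuity arguments upgrade a.e.\ equality to genuine equality on the relevant sections). I expect this identification step --- reconciling the measure-theoretic and topological descriptions of $W$ --- to be the only real obstacle; the rest is a direct transport of \cite[Theorem 2.1]{IlSp05} through the isometric algebra isomorphism of Theorem~\ref{conv_ab}.
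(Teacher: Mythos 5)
Your proposal is correct and is essentially the paper's own argument: the paper derives the corollary as an immediate consequence of Theorem~\ref{conv_ab} together with \cite[Theorem 2.1]{IlSp05}, exactly as you do. The extra care you take in reconciling the a.e.-defined set $W$ with the genuine open coset is a reasonable refinement of a point the paper leaves implicit, but it does not change the route.
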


It is clear that if $G$ is abelian, and $C$ and $D$ are open cosets of $G$ and $\Gamma$ respectively, then $C \times D$ is an open coset of $G \times \Gamma$ and therefore $\chi_{C \times D}$ is an idempotent convolution multiplier of norm 1 by Corollary~\ref{co:idempconvcoset}. 
The following example shows that not all idempotent convolution multipliers of norm 1 are of this product form.

\begin{example}\label{ex:idempconvdiagonalsubgrp}
{
\rm
Consider the abelian group $G = \RR \times \ZZ_2$, and note that $G$ is isomorphic to its dual group $\Gamma$.
Define
\[
	H := \{ (a,0,b,0) ,\ (c,1,d,1) : a,b,c,d \in \RR \}.
\]
It is clear that $H$ is an open subgroup of $G \times \Gamma$, but $H$ cannot be written as a product of 
subgroups of $G$ and $\Gamma$. 
}	
\end{example}

\begin{remark} 
{\rm
Let $G$ be an abelian locally compact group; by Theorem~\ref{conv_ab}, 
a contractive idempotent Herz--Schur convolution multiplier, say $F$, corresponds to 
a characteristic function $\chi_W$, 
for an open coset $W \subseteq G \times \Gamma$. 
In the following, we show more precisely 
how the family $(F(r))_{r \in G} \subseteq \cbm(C^*_r(\Gamma))$ arises. 
Suppose that $W = xH$ for an open subgroup $H$ of 
$G \times \Gamma$ and $x \in G \times \Gamma$. Let $\nu$ be the 
representation of $G \times \Gamma$ on $\ell^2((G \times \Gamma )/ H)$, given by $\nu(z) \delta_{yH} := \delta_{zyH}$
($z,y \in G \times \Gamma$), $\{\delta_{yH}\}_y$ be the standard orthonormal basis in $\ell^2((G \times \Gamma )/ H)$), and write $\overline{\nu}$ for the unitary representation $\gamma \mapsto \nu(e , \gamma)$ of $\Gamma$.
For $r\in G$, let $u_r\in B(\Gamma)$ be the function given by
\[
    u_r : \Gamma \to \CC ;\ u_r(\gamma) := \ip{\overline{\nu}(\gamma) \delta_{(r,e)H}}{\delta_W} .
\]
Then
\[
\begin{split}
    S_{\chi_W} (\lambda^\Gamma_\gamma \otimes \lambda^G_r) &= \chi_W(r,\gamma)(\lambda^\Gamma_\gamma \otimes \lambda^G_r) = \ip{\delta_{(r,\gamma)H}}{\delta_W}(\lambda^\Gamma_\gamma \otimes \lambda^G_r)\\&=\ip{\nu(r , \gamma) \delta_H}{\delta_W} (\lambda^\Gamma_\gamma \otimes \lambda^G_r) \\
        &= \ip{\overline{\nu}(\gamma) \delta_{(r,e)H}}{\delta_W} (\lambda^\Gamma_\gamma \otimes \lambda^G_r) \\
        &= u_r(\gamma) \lambda^\Gamma_\gamma \otimes \lambda^G_r ,
\end{split}
\]
so the idempotent element $\chi_W \in B(G \times \Gamma)$ corresponds to the Herz--Schur convolution multiplier $F(r) := u_r$.
}
\end{remark}

It is immediate from Host's theorem that if 
$G$ is a connected locally compact group then $B(G)$ does not have non-trivial idempotent elements. 
We observe that this extends to idempotent convolution multipliers on abelian groups.
Indeed, let $\psi$ be an idempotent convolution multiplier of the dynamical system $(C^*_r(\Gamma) , G , \beta)$ 
and write $\psi = \chi_W$ for some $W \subseteq G\times \Gamma$.
For $x \in \Gamma$ and $s\in G$, let
\[
    W^x := \{ t\in G : (t,x)\in W \} \quad \text{ and } \quad W_s := \{ y\in \Gamma : (s,y)\in W \} .
\]

\begin{proposition}\label{pr:idempconvcosets}
Let $\psi=\chi_W\in \HSconv^\id(G)$ and $\|\psi\|_\Sch \leq 1$.
Then $W^x$ (resp. $W_s$) is an open coset of $G$ 
(resp. $\Gamma$) for all $x\in\Gamma$ (resp. $s\in G$).
\end{proposition}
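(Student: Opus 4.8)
The plan is to reduce the statement to Corollary~\ref{co:idempconvcoset} and then run an elementary slicing argument for cosets of subgroups. First I would apply Theorem~\ref{conv_ab}: since $\psi = \chi_W \in \HSconv^\id(G)$, the function $\psi$ lies in $B(G\times\Gamma)$, and the hypothesis $\|\psi\|_\Sch \le 1$ says exactly that $\|\chi_W\|_{B(G\times\Gamma)} \le 1$; hence, by Corollary~\ref{co:idempconvcoset} (equivalently, by \cite[Theorem 2.1]{IlSp05}), $W$ is an open coset of $G\times\Gamma$. I would then fix a presentation $W = (s_0,x_0) + H$, where $H$ is an open subgroup of $G\times\Gamma$ and $(s_0,x_0)\in G\times\Gamma$.

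Next I would introduce the ``axis'' subgroups $K := \{u\in G : (u,0)\in H\}$ and $L := \{v\in\Gamma : (0,v)\in H\}$. Identifying $G$ with $G\times\{0\}$ through the homeomorphism $g\mapsto (g,0)$ (the subspace topology on $G\times\{0\}$ agrees with the topology of $G$ since $G\times\Gamma$ carries the product topology), we have $K = H\cap (G\times\{0\})$, so $K$ is a subgroup of $G$, and it is open because $H$ is open in $G\times\Gamma$; symmetrically $L$ is an open subgroup of $\Gamma$. Now fix $x\in\Gamma$. One has $(t,x)\in W$ if and only if $(t-s_0, x-x_0)\in H$. If there is no such $t$, then $W^x = \emptyset$; otherwise, choosing $t_1$ with $(t_1,x)\in W$, the equivalence $(t,x)\in W \iff (t-t_1,0)\in H \iff t-t_1\in K$ shows $W^x = t_1 + K$, an open coset of $G$. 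Interchanging the roles of $G$ and $\Gamma$ gives, for each $s\in G$, that $W_s$ is either empty or of the form $y_1 + L$ for a suitable $y_1\in\Gamma$, i.e.\ an open coset of $\Gamma$.

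I do not expect a serious obstacle: once Corollary~\ref{co:idempconvcoset} is in hand, the content is just the observation that a horizontal (respectively vertical) slice of a coset of a subgroup is again a coset of a subgroup, plus the routine check that $K$ and $L$ inherit openness from $H$. The only point I would flag explicitly is that a slice may be empty --- for instance when the image of $H$ under the projection onto $\Gamma$ (respectively onto $G$) is proper --- so the statement is to be understood with the convention that the empty set counts as a degenerate open coset, or else restricted to those $x$ (respectively $s$) for which the slice is nonempty; I would insert one sentence to this effect.
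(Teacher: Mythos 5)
Your proof is correct, but it takes a genuinely different route from the paper's. The paper never passes through Corollary~\ref{co:idempconvcoset}: it slices first, observing that $\psi^x=\chi_{W^x}$ and $\psi_s=\chi_{W_s}$ are themselves contractive idempotents of $B(G)$ and $B(\Gamma)$ respectively (using the inclusion $\HSconv^\id(G)\subseteq\mathfrak{F}(B(G),B(\Gamma))$, admissibility, and the fact that restriction to a coset of $G\times\{e\}$ or $\{e\}\times\Gamma$ is contractive on $B(G\times\Gamma)$, whose norm controls everything via Theorem~\ref{conv_ab}), and then applies \cite[Theorem 2.1]{IlSp05} once in $G$ and once in $\Gamma$. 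You instead apply Ilie--Spronk a single time to the product group, via Corollary~\ref{co:idempconvcoset}, to get that $W$ itself is an open coset $(s_0,x_0)+H$, and then run an elementary slicing argument on $H$. Both are valid; your version buys the additional structural information that every nonempty horizontal slice $W^x$ is a coset of \emph{one and the same} open subgroup $K=H\cap(G\times\{0\})$ of $G$ (and likewise for the vertical slices and $L$), which the paper's slice-by-slice argument does not immediately yield. Your caveat about empty slices is well taken --- it applies equally to the paper's own proof, since the zero function is an idempotent of norm $0$, not $1$, and is not covered by \cite[Theorem 2.1]{IlSp05}; your example ($H=G\times\{0\}$ for $G$ compact) shows the case genuinely occurs, so the one-sentence convention you propose is the right fix.
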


\begin{proof}
Since for any $x\in \Gamma$, $s\in G$, we have $\psi^x=\chi_{W^x}$ and $\psi_s=\chi_{W_s}$,
the statement follows from  
\cite[Theorem 2.1]{IlSp05}, as $\psi^x\in B(G)$ and $\psi_s\in B(\Gamma)$.
\end{proof}

If $\psi=\chi_W\in \HSconv^\id(G)$ is contractive, 
as $\psi$ is separately continuous,
we obtain that $W_s=W_{s'}$ if $s$ and $s'$ are in the same connected component of $G$.
Similarly, we have $W^x=W^{x'}$ for $x,x'$ in the same connected component of $\Gamma$.
This implies the following corollary.

\begin{corollary}\label{co:idempconnected}
If the group $G$ (resp.\ $\Gamma$) is connected then any contractive 
idempotent multiplier $\psi\in \HSconv^\id(G)$ is given by $\psi=1\otimes\chi_A$ (resp.\ $\psi=\chi_A\otimes 1$), where $A$ is an open coset of $\Gamma$ (resp.\ $G$).
\end{corollary}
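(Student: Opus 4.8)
The plan is to read the statement off Proposition~\ref{pr:idempconvcosets} together with the separate continuity of $\psi$. First I would recall that, since $\psi\in\HSconv^\id(G)$, Theorem~\ref{conv_ab} gives $\psi\in B(G\times\Gamma)$, so $\psi$ is separately continuous on $G\times\Gamma$ (this is in any case built into the inclusion $\HSconv^\id(G)\subseteq\mathfrak{F}(B(G),B(\Gamma))$). Writing $\psi=\chi_W$, it follows that for each fixed $y\in\Gamma$ the map $s\mapsto\psi(s,y)$ is a continuous, $\{0,1\}$-valued function on $G$, and symmetrically for each fixed $s\in G$ the map $x\mapsto\psi(s,x)$ is continuous and $\{0,1\}$-valued on $\Gamma$.

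Now suppose $G$ is connected. Then for each $y\in\Gamma$ the function $s\mapsto\psi(s,y)$ is constant on $G$, so $\psi(s,y)$ does not depend on $s$; fixing any $s_0\in G$ and putting $A:=W_{s_0}=\{y\in\Gamma:\psi(s_0,y)=1\}$, we get $\psi(s,y)=\chi_A(y)$ for all $(s,y)\in G\times\Gamma$, that is, $\psi=1\otimes\chi_A$. If $A=\emptyset$ then $\psi=0$; otherwise Proposition~\ref{pr:idempconvcosets} applied with $s=s_0$ shows that $A=W_{s_0}$ is an open coset of $\Gamma$. The case in which $\Gamma$ is connected is entirely symmetric: now $x\mapsto\psi(s,x)$ is constant on $\Gamma$ for each $s\in G$, hence $W^x=:B$ is independent of $x$, $\psi=\chi_B\otimes 1$, and Proposition~\ref{pr:idempconvcosets} identifies $B$ as an open coset of $G$.

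There is no genuine obstacle here; the content is fully contained in Theorem~\ref{conv_ab} and Proposition~\ref{pr:idempconvcosets}, and the core observation (constancy of the sections $W_s$, resp.\ $W^x$, along connected components) is already recorded in the discussion preceding the corollary. The only points that need a word of care are the degenerate case $W=\emptyset$, which gives $\psi=0$ and must be set aside since the empty set is not a coset, and keeping the roles of $G$ and $\Gamma$ straight when passing between the two assertions.
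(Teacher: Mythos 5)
Your proposal is correct and follows exactly the paper's route: the paper derives the corollary from the remark immediately preceding it (separate continuity of $\psi=\chi_W$ forces the sections $W_s$, resp.\ $W^x$, to be constant on connected components) combined with Proposition~\ref{pr:idempconvcosets}. Your extra attention to the degenerate case $W=\emptyset$ is a reasonable refinement but does not change the argument.
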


In particular, we have that 
$C^*_r(\RR)\rtimes_{\beta,r} \RR$ has no non-trivial idempotent Herz--Schur convolution multipliers, and any idempotent Herz--Schur convolution multiplier of $C(\mathbb T)\rtimes_{\beta ,r} \ZZ$ is given by $\chi_A\otimes 1$, where $A$ is a coset of $\ZZ$.

\begin{example}\rm
Let $G$ be a locally compact group.
Since $\Mcb L^1(G) = M(G)$, we have that $\gamma m_H \otimes \chi_C \in \Mcb (L^1(G) \projtens A(G))$, where $C$ is an open coset of $G$, $H$ is a compact subgroup and $\gamma$ is a character of $H$.  The corresponding convolution multiplier $\Lambda=(\mu_t)_{t\in G}$ 
is given by $\mu_t=\chi_C(t)\gamma m_H$. In fact, if $R$ is the completely bounded map 
\[
    R(f\otimes g) = \left((\gamma m_H)\ast f\right) \otimes\chi_C g,  \quad f\in L^1(G), g\in A(G),
\]
then
\[
    R^*(h\otimes\lambda_t) = \theta(\gamma m_H)(h)\otimes \chi_C\lambda_t = \theta(\gamma m_H) h \otimes \chi_C(t)\lambda_t.
\]
\end{example}

\begin{remark}\label{re:quantumgrpformulation}
{\rm 
For a (not necessarily abelian) locally compact group $G$ the algebra $C_0(G \times \hat G) := C_0(G)\otimes C_r^*(G)$ can be considered as a quantum group  with the comultiplication induced from comultiplications of the factors $C_0(G)$ and $C_r^*(G)$. In \cite{nsss} the authors give a characterisation of contractive idempotent functionals on $C^*$-quantum groups in terms of compact quantum subgroups and group-like unitaries of the subgroup. 
It would be interesting to use this characterisation to describe contractive convolution multipliers in the non-abelian case.
At present, however, a lack of examples of compact quantum subgroups of $C_0(G\times\hat G)$ 
impedes the application of the results of \cite{nsss} to convolution multipliers. 
}
\end{remark}

\medskip

\noindent {\bf Acknowledgements.}\ 
We would like to thank Przemyslaw Ohrysko for helpful conversations during the preparation of this paper. 
The second author was partially supported by the Ministry of Sciences of Iran and  School of Mathematics of Institute for research in fundamental sciences (IPM). Most of this work was completed when the second author was visiting Chalmers University of Technology. She would like to thank the Department of Mathematical Sciences of Chalmers University of Technology and the University of Gothenburg for warm hospitality. She would also like to thank Alireza Medghalchi and Massoud Amini for their support and encouragement during this work.

\bibliographystyle{plain}


\bibliography{specialcasesbib}


\end{document}